\newtheorem{theorem}{Theorem}[section]
\newtheorem{lemma}[theorem]{Lemma}
\newtheorem{corollary}[theorem]{Corollary}
\newtheorem{proposition}[theorem]{Proposition}
\theoremstyle{definition}
\newtheorem{remark}[theorem]{Remark}
\newtheorem{definition}[theorem]{Definition}
\numberwithin{equation}{section}
\newcommand{\red}[1]{{\color{red}#1}}
\newcommand{\blue}[1]{{\color{blue}#1}}
\newcommand{\ignore}[1]{}
\newcommand{\bli}{\begin{list}{}{\labelwidth6mm\leftmargin8mm}}
\newcommand{\eli}{\end{list}}
\newcommand{\dint}{\;\mathrm{d}}
\newcommand{\Dd}{\;\mathrm{D}}
\newcommand{\whole}[1]{\ensuremath\left\lfloor #1 \right\rfloor}
\newcommand{\fz}{\infty}
\def\ls{\lesssim}
\def\vz{\varphi}
\def\supp{{\mathop\mathrm{supp\,}\nolimits}}
\newcommand{\Be}{{B}_{p_1,q_1}^{s_1}}
\newcommand{\Bz}{{B}_{p_2,q_2}^{s_2}}
\newcommand{\Fe}{{F}_{p_1,q_1}^{s_1}}
\newcommand{\Fz}{{F}_{p_2,q_2}^{s_2}}
\newcommand{\Ae}{{A}_{p_1,q_1}^{s_1}}
\newcommand{\Az}{{A}_{p_2,q_2}^{s_2}}
\newcommand{\bt}{{B}_{p,q}^{s,\tau}}
\newcommand{\bte}{{B}_{p_1,q_1}^{s_1,\tau_1}}
\newcommand{\btz}{{B}_{p_2,q_2}^{s_2,\tau_2}}
\newcommand{\at}{{A}_{p,q}^{s,\tau}}
\newcommand{\ate}{{A}_{p_1,q_1}^{s_1,\tau_1}}
\newcommand{\atz}{{A}_{p_2,q_2}^{s_2,\tau_2}}
\newcommand{\ft}{{F}_{p,q}^{s,\tau}}
\newcommand{\fte}{{F}_{p_1,q_1}^{s_1,\tau_1}}
\newcommand{\ftz}{{F}_{p_2,q_2}^{s_2,\tau_2}}
\newcommand{\MA}{\ensuremath{{\cal A}^{s}_{u,p,q}}}
\newcommand{\MB}{\ensuremath{{\cal N}^{s}_{u,p,q}}}
\newcommand{\MBe}{\ensuremath{{\cal N}^{s_1}_{u_1,p_1,q_1}}}
\newcommand{\MBz}{\ensuremath{{\cal N}^{s_2}_{u_2,p_2,q_2}}}
\newcommand{\MF}{\ensuremath{{\cal E}^{s}_{u,p,q}}}
\newcommand{\M}{\ensuremath{{\cal M}_{u,p}}}
\newcommand{\cM}{\ensuremath{{\mathcal M}}}
\newcommand{\SRn}{\mathcal{S}(\rn)}
\newcommand{\SpRn}{\mathcal{S}'(\rn)}
\newcommand{\sbt}{{b}_{p,q}^{s,\tau}}
\renewcommand{\sbt}{{b}_{p,q}^{s,\tau}}
\newcommand{\sbte}{{b}_{p_1,q_1}^{s_1,\tau_1}}
\newcommand{\sbtz}{{b}_{p_2,q_2}^{s_2,\tau_2}}
\newcommand{\sat}{{a}_{p,q}^{s,\tau}}
\newcommand{\sft}{{f}_{p,q}^{s,\tau}}
\newcommand{\bmo}{\mathrm{bmo}}
\newcommand{\Lloc}{L_1^{\mathrm{loc}}}
\newcommand{\B}{\ensuremath{B^s_{p,q}}}
\newcommand{\F}{\ensuremath{F^s_{p,q}}}
\newcommand{\A}{\ensuremath{A^s_{p,q}}}
\newcommand{\beq}{\begin{equation}}
\newcommand{\eeq}{\end{equation}}
\newcommand{\ve}{\varepsilon}
\newcommand{\mixB}{\ell_q(L_p^\tau)}
\newcommand{\mixF}{L_p^\tau(\ell_q)}
\newcommand{\id}{\ensuremath{\operatorname{id}}}
\newcommand{\real}{\ensuremath{{\mathbb R}}}
\newcommand{\rr}{{\real}}
\newcommand{\rn}{\ensuremath{{\real^d}}}
\newcommand{\cc}{\ensuremath{{\mathbb C}}}
\newcommand{\zz}{\ensuremath{{\mathbb Z}}}
\newcommand{\zn}{\ensuremath{{\zz^d}}}
\newcommand{\nn}{\ensuremath{{\mathbb N}}}
\newcommand{\no}{\ensuremath{{\mathbb N}_0}}
\newcommand{\cs}{\ensuremath{\mathcal S}}
\newcommand{\critical}{\ensuremath{\gamma(\tau_1,\tau_2,p_1,p_2)}}
\newcommand{\ext}{\ensuremath{\mathop\mathrm{ext}\nolimits}}
\newcommand{\Ext}{\ensuremath{\mathop\mathrm{Ext}\nolimits}}
\newcommand{\re}{\ensuremath{\mathop\mathrm{re}}}
\begin{document}

\title{Limiting embeddings of Besov-type and Triebel-Lizorkin-type spaces on domains and an extension operator}

\author{Helena F. Gon\c{c}alves, Dorothee D. Haroske and Leszek Skrzypczak\footnotemark[1]}







\footnotetext[1]{All authors were partially supported by the German Research Foundation (DFG), Grant no. Ha 2794/8-1. The third author was also supported by National Science Center, Poland,  Grant no. 2013/10/A/ST1/00091.}

\maketitle

	\begin{abstract} 
In this paper, we study limiting embeddings of Besov-type and Triebel-Lizorkin-type spaces, $\id_\tau: \bte(\Omega) \hookrightarrow \btz(\Omega)$ and $\id_\tau : \fte(\Omega)  \hookrightarrow \ftz(\Omega)$, where $\Omega \subset \rn$ is a bounded domain, obtaining necessary and sufficient conditions for the continuity of $\id_\tau$. This can also be seen as the continuation of our previous studies of compactness of the embeddings in the non-limiting case. 
Moreover, 
we also construct Rychkov's  linear, bounded universal extension operator for these spaces.\\

{\bf Keywords}: Besov-type space, Triebel-Lizorkin-type spaces, smoothness Morrey spaces on domains, limiting embeddings, extension operator.  \\
	 
{\bf Mathematics Subject classification}: 46E35, 42B35
\end{abstract}

{\section{Introduction}\label{intro}
  Besov-type spaces $\bt(\rn)$ and Triebel-Lizorkin-type spaces $\ft(\rn)$, $0 < p <\infty$ (or $p=\infty$ in the $B$-case),
   $0 < q \le \infty$, $\tau\ge 0$, $s \in \real$, are part of a class of function spaces built upon Morrey spaces $\M(\rn)$, $0< p \le u < \infty$. They are regularly called in the literature as \textit{smoothness spaces of Morrey type} or, shortly, \textit{smoothness Morrey spaces}, and they have been increasingly studied in the last decades, motivated firstly by possible applications.

The classical Morrey spaces $\M$, $0< p \le u < \infty$ , were introduced by Morrey in \cite{Mor} and are part of a wider class of Morrey-Campanato spaces, cf. \cite{Pee}. They can be seen as a complement to $L_p$ spaces, since $\ensuremath{{\cal M}_{p,p}}(\rn)= L_p(\rn)$.

The (inhomogeneous) Besov-type and Triebel-Lizorkin-type spaces we work here with were introduced and intensively studied in \cite{ysy} by Yuan, Sickel and Yang. 
Their homogeneous versions were previously investigated by El Baraka in \cite{ElBaraka1,ElBaraka2, ElBaraka3}, and also by Yuan and Yang \cite{yy1,yy2}.  Considering $\tau=0$, one recovers the classical Besov and Triebel-Lizorkin spaces. Moreover, they are also closely connected with Besov-Morrey spaces $\MB(\rn)$ and Triebel-Lizorkin-Morrey spaces $\MF(\rn)$, $0 < p \le u < \infty$, $0 < q \le \infty$, $s \in \real$, which are also included in the class of \textit{smoothness Morrey spaces}. Namely, when $p\leq u$, $\tau=\frac{1}{p}-\frac{1}{u}$, then the Triebel-Lizorkin-type space $\ft(\rn)$ coincides with $\MF(\rn)$, and it is also known that the Besov-Morrey space $\MB(\rn)$ is a proper subspace of $\bt(\rn)$ with $\tau=\frac{1}{p}-\frac{1}{u}$, $p<u$ and $q<\infty$. The Besov-Morrey spaces were introduced by Kozono and Yamazaki in \cite{KY} and used by them and later on by Mazzucato \cite{Maz} in the study of Navier-Stokes equations. In \cite{TX} Tang and Xu introduced the corresponding Triebel-Lizorkin-Morrey spaces, thanks to establishing the Morrey version of the  Fefferman-Stein vector-valued inequality. Some properties of these spaces including their wavelet characterisations were later described in the papers by Sawano \cite{Saw2,Saw1}, Sawano and Tanaka \cite{ST2,ST1} and Rosenthal \cite{MR-1}.
{The surveys \cite{s011,s011a} by Sickel are also worth of being consulted when studying these scales.} 
 Recently, some limiting embedding properties of these spaces were investigated in a series of papers \cite{hs12,hs12b,hs14, HaSk-krakow, HaSk-morrey-comp}. As for the Besov-type and Triebel-Lizorkin-type spaces, also embedding properties have been recently studied in \cite{ghs20, YHMSY, YHSY}.

\bigskip

Undoubtedly the question of necessary and sufficient conditions for continuous embeddings of certain function spaces is a natural and classical one. Beyond that, this paper should essentially be understood as the continuation of our earlier studies in \cite{ghs20}. Proceeding contrary to the usual, there we started by studying compactness of the embeddings of Besov-type ($A=B$) and Triebel-Lizorkin-type ($A=F$) spaces,
\[\id_\tau~:~\ate(\Omega)~\hookrightarrow~\atz(\Omega),\]
where $\Omega \subset \rn$ is a bounded domain. Now we finally deal with the continuity of such embeddings, obtaining sufficient and necessary conditions on the parameters under which they hold true.  According to the results obtained in \cite{ghs20}, the embedding $\id_\tau$ is compact if, and only if, 
\[
\frac{s_1-s_2}{d} > \max \left\{ \left(\tau_2-\frac{1}{p_2} \right)_+- \left(\tau_1-\frac{1}{p_1} \right)_+, \frac{1}{p_1}-\tau_1 -\min\left\{ \frac{1}{p_2}-\tau_2, \frac{1}{p_2}(1-p_1\tau_1)_+\right\} \right\}=: \gamma, 
\]
with $a_+:= \max\{a,0\}$, and there is no continuous embedding $\id_\tau$ when $s_1-s_2<d \, \gamma$. Consequently, only the case when
$$
\frac{s_1-s_2}{d} = \gamma
$$
is of interest to us here. In what follows, we call this setting `limiting situation', giving meaning to the expression `limiting embedding'. In that way we complement earlier results in \cite{hs12b,hs14,YHSY} in related settings. 
{Our main results are stated in Theorem~\ref{lim-lim},  Propositions~\ref{lim-tau2-large} and \ref{lim-tau1-large}. In the propositions we consider the situation when one of the spaces, the {source} one or the target one, coincides  with some classical Besov space $B^\sigma_{\infty,\infty }(\Omega)$. The outcome for the spaces that does not satisfy this assumption can be found in the theorem.  In almost all cases we prove the sharp sufficient and necessary conditions. Only in one case we have a small gap between them, cf. Remark~\ref{lim-rem}. }     } 

Of independent interest  is also the extension theorem we are able to prove for the spaces under consideration. The first extension operators for Besov-type and Triebel-Lizorkin type spaces were constructed by Sickel, Yang and Yuan in \cite{ysy}, cf. Theorem 6.11 and 6.13 ibidem. However it is assumed there that domains are $C^\infty$ smooth and the extension operators were not universal. Another not universal construction for the smooth domains was  given by Moura, Neves and Schneider in \cite{MNS}. The Rychkov universal extension operator for the Triebel-Lizorkin type spaces defined on Lipschitz domains was recently constructed by  Zhou, Hovemann and Sickel in \cite{ZHS20} with additional assumption   $p,q \in [1, \infty)$. Here we considered  all admissible parameters $p$ and $q$. We concentrate on the Besov-type spaces, that  are not a real interpolation space of Triebel-Lizorkin-type spaces, in contrast to the classical case, cf.  \cite{YSY-15}. 
 The extension theorem will not only help us to obtain the results about the continuity, but also will allow us to improve some necessary conditions of such embeddings on $\rn$. We follow \ignore{the approach of Rychkov in} {Rychkov's approach from} \cite{Ryc} and construct such an operator, for all possible values of \ignore{$p\in(0, \infty]$} {$p,q\in(0, \infty]$}. We learned only recently that in \cite{Z21,ZHS20} the authors followed a similar approach to construct such an extension operator adapted to their purposes, that is, for $p,q\geq 1$. For the convenience of the reader we keep our argument for the full range of parameters here.

This paper is organized as follows. In Section~\ref{prelim} we recall the definition, on $\rn$ and on bounded domains $\Omega \subset \rn$, of the spaces considered in the paper and collect some basic properties, among them the   wavelet characterisations. In Section~\ref{extension} we deal with the construction of a universal linear bounded extension operator for the spaces $\at$. Section~\ref{lim-emb} is, finally, devoted to the study of continuity properties of limiting embeddings of the spaces $\at(\Omega)$. Moreover, we make use of these results and the extension theorem from Section 3 to improve prior results on the continuity of embeddings of the corresponding spaces on $\rn$, cf. \cite{YHSY}.

\section{Preliminaries}\label{prelim}
First we fix some notation. By $\nn$ we denote the \emph{set of natural numbers},
by $\nn_0$ the set $\nn \cup \{0\}$,  and by $\zn$ the \emph{set of all lattice points
	in $\rn$ having integer components}. {Let $\no^d$, where $d \in \nn$, be the set of all multi-indices, $\alpha=(\alpha_1,..., \alpha_d)$ with $\alpha_j \in \no$ and $|\alpha|:=\sum_{j=1}^d\alpha_j$. If $x=(x_1,..., x_d) \in \rn$ and $\alpha=(\alpha_1,..., \alpha_d) \in \no^d$, then we put $x^\alpha=x_1^{\alpha_1} \cdot \cdot \cdot x_d^{\alpha_d}$.}
For $a\in\real$, let   $\whole{a}:=\max\{k\in\zz: k\leq a\}$ and $a_+:=\max\{a,0\}$.
All unimportant positive constants will be denoted by $C$, occasionally with
subscripts. By the notation $A \ls B$, we mean that there exists a positive constant $C$ such that
$A \le C \,B$, whereas  the symbol $A \sim B$ stands for $A \ls B \ls A$.
We denote by $B(x,r) :=  \{y\in \rn: |x-y|<r\}$ the ball centred at $x\in\rn$ with radius $r>0$, and $|\cdot|$ denotes the Lebesgue measure when applied to measurable subsets of $\rn$.

Given two (quasi-)Banach spaces $X$ and $Y$, we write $X\hookrightarrow Y$
if $X\subset Y$ and the natural embedding of $X$ into $Y$ is continuous.

\subsection{Smoothness spaces of Morrey type on $\rn$}

Let $\SRn$ be the set of all \emph{Schwartz functions} on $\rn$, endowed
with the usual topology,
and denote by $\SpRn$ its \emph{topological dual}, namely,
the space of all bounded linear functionals on $\SRn$
endowed with the weak $\ast$-topology.
For all $f\in \cs(\rn)$ or $\cs'(\rn)$, we
use $\widehat{f}$ to denote its \emph{Fourier transform}, and $f^\vee$ for its inverse.
Let $\mathcal{Q}$ be the collection of all \emph{dyadic cubes} in $\rn$, namely,
$
\mathcal{Q}:= \{Q_{j,k}:= 2^{-j}([0,1)^d+k):\ j\in\zz,\ k\in\zn\}.
$
The {symbol}  $\ell(Q)$ denotes
the side-length of the cube $Q$ and $j_Q:=-\log_2\ell(Q)$. {Moreover, we denote by $\chi_{Q_{j,m}}$ the characteristic function of the cube $Q_{j,m}$.} \\

Let $\vz_0,$ $\vz\in\SRn$. We say that $(\vz_0, \vz)$ is an \textit{admissible pair} if
\begin{equation}\label{e1.0}
\supp \widehat{\vz_0}\subset \{\xi\in\rn:\,|\xi|\le2\}\, , \qquad
|\widehat{\vz_0}(\xi)|\ge C\ \text{if}\ |\xi|\le 5/3,
\end{equation}
and
\begin{equation}\label{e1.1}
\supp \widehat{\vz}\subset \{\xi\in\rn: 1/2\le|\xi|\le2\}\quad\text{and}\quad
|\widehat{\vz}(\xi)|\ge C\ \text{if}\  3/5\le|\xi|\le 5/3,
\end{equation}
where $C$ is a positive constant.
In what follows, for all $\vz\in\cs(\rn)$ and $j\in\nn$, $\vz_j(\cdot):=2^{jd}\vz(2^j\cdot)$.\\

\begin{definition}\label{d1}
Let $s\in\rr$, $\tau\in[0,\infty)$, $q \in(0,\fz]$ and $(\vz_0,\vz)$ be an admissible pair. 
\bli
\item[{\bfseries\upshape (i)}]
Let $p\in(0,\infty]$. The \emph{Besov-type space} $\bt(\rn)$ is defined to be the collection of all $f\in\SpRn$ such that

$$\|f \mid {\bt(\rn)}\|:=
\sup_{P\in\mathcal{Q}}\frac1{|P|^{\tau}}\left\{\sum_{j=\max\{j_P,0\}}^\fz\!\!
2^{js q}\left[\int\limits_P
|\vz_j\ast f(x)|^p\dint x\right]^{\frac{q}{p}}\right\}^{\frac1q}<\fz$$

with the usual modifications made in case of $p=\fz$ and/or $q=\fz$.
\item[{\bfseries\upshape (ii)}]
Let $p\in(0,\infty)$. The \emph{Triebel-Lizorkin-type space} $\ft(\rn)$ is defined to be the collection of all $f\in \SpRn$ such that
$$\|f \mid {\ft(\rn)}\|:=
\sup_{P\in\mathcal{Q}}\frac1{|P|^{\tau}}\left\{\int\limits_P\left[\sum_{j=\max\{j_P,0\}}^\fz\!\!
2^{js q}
|\vz_j\ast f(x)|^q\right]^{\frac{p}{q}}\dint x\right\}^{\frac1p}<\fz$$
with the usual modification made in case of $q=\fz$.
\eli
\end{definition}

\begin{remark}\label{Rem-Ftau}
	These spaces were introduced in \cite{ysy} and proved therein to be quasi-Banach spaces. In the Banach case  the scale of Nikol'skij-Besov type spaces ${\bt(\rn)}$ had already been introduced and investigated in \cite{ElBaraka1,ElBaraka2, ElBaraka3}.  It is easy to see that, when $\tau=0$, then $\bt(\rn)$ and $\ft(\rn)$
	coincide with the classical
	Besov space $\B(\rn)$ and Triebel-Lizorkin space $\F(\rn)$,
	respectively. In case of $\tau<0$ the spaces are trivial, 
	$\bt(\rn)=\ft(\rn)=\{0\}$, $\tau<0$.  There exists extensive literature on such spaces; we
	refer, in particular, to the series of monographs \cite{T-F1,T-F2,t06,T20} for a	comprehensive treatment.
\end{remark}

\noindent{\em Convention.}~We adopt the nowadays usual custom to write $\A(\rn)$ instead of $\B(\rn)$ or $\F(\rn)$, and $\at(\rn)$ instead of $\bt(\rn)$ or $\ft(\rn)$, respectively, when both scales of spaces are meant simultaneously in some context.

We have elementary embeddings within this scale of spaces (see \cite[Proposition~2.1]{ysy}),
\begin{equation} \label{elem-0-t}
A^{s+\ve,\tau}_{p,r}(\rn) \hookrightarrow \at(\rn) \qquad\text{if}\quad \varepsilon\in(0,\fz), \quad r,\,q\in(0,\infty],
\end{equation}
and
\begin{equation} \label{elem-1-t}
{A}^{s,\tau}_{p,q_1}(\rn)  \hookrightarrow {A}^{s,\tau}_{p,q_2}(\rn)\quad\text{if} \quad q_1\le q_2,
\end{equation}
as well as
\begin{equation}\label{elem-tau}
B^{s,\tau}_{p,\min\{p,q\}}(\rn)\, \hookrightarrow \, \ft(\rn)\, \hookrightarrow \, B^{s,\tau}_{p,\max\{p,q\}}(\rn),
\end{equation}
which directly extends the well-known classical case from $\tau=0$ to $\tau\in [0,\infty)$, $p\in(0,\fz)$, $q\in(0,\fz]$ and $s\in\real$.

It is also known from \cite[Proposition~2.6]{ysy}
that 
\begin{equation} \label{010319}
A^{s,\tau}_{p,q}(\rn) \hookrightarrow B^{s+d(\tau-\frac1p)}_{\fz,\fz}(\rn). 
\end{equation} 
The following  remarkable feature was proved in \cite{yy02}.
\begin{proposition}\label{yy02}
	Let $s\in\rr$, $\tau\in[0,\fz)$  and $p,\,q\in(0,\fz]$ (with $p<\infty$ in the $F$-case). If
	either $\tau>\frac1p$ or $\tau=\frac1p$ and $q=\infty$, then $A^{s,\tau}_{p,q}(\rn) = B^{s+d(\tau-\frac1p)}_{\fz,\fz}(\rn)$.
\end{proposition}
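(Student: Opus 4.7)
The plan is to establish the non-trivial inclusion $B^\sigma_{\infty,\infty}(\rn) \hookrightarrow \at(\rn)$, where $\sigma := s+d(\tau-1/p)$, since the converse inclusion is already recorded in \eqref{010319}. The main input is the classical characterisation $\|\varphi_j \ast f\|_\infty \lesssim 2^{-j\sigma}\|f\mid B^\sigma_{\infty,\infty}(\rn)\|$, valid uniformly in $j \in \no$.

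I would first treat the Besov-type case. For a dyadic cube $P$ and any $j \geq \max\{j_P,0\}$, the pointwise bound just recalled gives $\int_P |\varphi_j\ast f(x)|^p\,\mathrm{d}x \lesssim 2^{-j\sigma p}|P|$. Since $s-\sigma = d(1/p - \tau)$, the inner sum in the $\bt$-norm over the cube $P$ reduces to the geometric series
$$|P|^{q/p}\sum_{j \geq \max\{j_P,0\}} 2^{jd(1/p-\tau)q},$$
with the obvious modification when $q=\infty$. The hypothesis $\tau>1/p$ makes this sum convergent and comparable to $|P|^{q/p} 2^{\max\{j_P,0\}d(1/p-\tau)q}$, while the boundary case $\tau=1/p$, $q=\infty$ just gives a uniform supremum. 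Taking the $1/q$-th power and dividing by $|P|^\tau$ leaves the factor $|P|^{1/p-\tau}2^{\max\{j_P,0\}d(1/p-\tau)}$: for $j_P \geq 0$ the two exponential terms cancel exactly, and for $j_P<0$ one has $|P|\geq 1$ together with $1/p-\tau\leq 0$, so $|P|^{1/p-\tau}\leq 1$. Taking the supremum over $P$ closes the Besov-type estimate.

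For the Triebel-Lizorkin-type case, when $\tau > 1/p$ one can avoid repeating the calculation by invoking \eqref{elem-tau}: the Besov-type result just proved yields $B^{s,\tau}_{p,\min\{p,q\}}(\rn)=B^{s,\tau}_{p,\max\{p,q\}}(\rn)=B^\sigma_{\infty,\infty}(\rn)$, and this sandwiches $\ft(\rn)$ between two copies of the same space. For the boundary case $\tau=1/p$, $q=\infty$ in the $F$-scale the same pointwise bound suffices: $\sup_j 2^{js}|\varphi_j\ast f(x)|$ is already dominated by a constant times $\|f\mid B^s_{\infty,\infty}(\rn)\|$, so integration over $P$ produces $|P|^{1/p}$, which is absorbed by the normalising factor $|P|^{-1/p}$.

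The argument is essentially mechanical once the hypothesis on $\tau$ is exploited to make the crucial geometric series in $j$ convergent; the only point requiring care is the bookkeeping of exponents and the separate treatment of large cubes ($j_P<0$). This also makes transparent why the statement fails for $\tau<1/p$: the geometric series then diverges, and one only has a proper inclusion $\at(\rn)\subsetneq B^\sigma_{\infty,\infty}(\rn)$.
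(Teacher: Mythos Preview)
Your argument is correct. The one inclusion is quoted from \eqref{010319}, and for the other you plug the elementary bound $\|\varphi_j\ast f\|_{L_\infty}\lesssim 2^{-j\sigma}\|f\mid B^\sigma_{\infty,\infty}(\rn)\|$ into the defining norm and reduce matters to a geometric series controlled precisely by the hypothesis on $\tau$ and $q$. The case distinction $j_P\ge 0$ versus $j_P<0$ and the sandwich \eqref{elem-tau} for the $F$-case are handled cleanly.

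Note, however, that the paper does not supply its own proof of this proposition: it is simply quoted as a known result from \cite{yy02} (Yang--Yuan). So there is no ``paper's proof'' to compare against here; your direct computation is an independent, self-contained verification of a fact the paper treats as background. What your approach buys is transparency---it makes visible exactly where $\tau>1/p$ (convergence of the $j$-series) and $\tau=1/p$, $q=\infty$ (trivial supremum) enter, and why the statement breaks down otherwise.
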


Now we come to smoothness  spaces of Morrey type   $\MB(\rn)$ and $\MF(\rn)$. Recall first that the \emph{Morrey space}
$\M(\rn)$, $0<p\le u<\infty $, is defined to be the set of all
locally $p$-integrable functions $f\in L_p^{\mathrm{loc}}(\rn)$  such that
$$
\|f \mid {\M(\rn)}\| :=\, \sup_{x\in \rn, R>0} R^{\frac{d}{u}-\frac{d}{p}}
\left[\int_{B(x,R)} |f(y)|^p \dint y \right]^{\frac{1}{p}}\, <\, \infty\, .
$$

\begin{remark}
	The spaces $\M(\rn)$ are quasi-Banach spaces (Banach spaces for $p \ge 1$).
	They originated from Morrey's study on PDE (see \cite{Mor}) and are part of the wider class of Morrey-Campanato spaces; cf. \cite{Pee}. They can be considered as a complement to $L_p$ spaces. As a matter of fact, $\cM_{p,p}(\rn) = L_p(\rn)$ with $p\in(0,\infty)$.
	To extend this relation, we put  $\cM_{\infty,\infty}(\rn)  = L_\infty(\rn)$. One can easily see that $\M(\rn)=\{0\}$ for $u<p$, and that for  $0<p_2 \le p_1 \le u < \infty$,
	\begin{equation} \label{LinM}
	L_u(\rn)= \cM_{u,u}(\rn) \hookrightarrow  \cM_{u,p_1}(\rn)\hookrightarrow  \cM_{u,p_2}(\rn).
	\end{equation}
	In an analogous way, one can define the spaces $\cM_{\infty,p}(\rn)$, $p\in(0, \infty)$, but using the Lebesgue differentiation theorem, one can easily prove  that
	$\cM_{\infty, p}(\rn) = L_\infty(\rn)$.
\end{remark}

\begin{definition}\label{d2.5}
	Let $0 <p\leq  u<\infty$ or $p=u=\infty$. Let  $q\in(0,\infty]$, $s\in \real$ and $\vz_0$, $\vz\in\cs(\rn)$
	be as in \eqref{e1.0} and \eqref{e1.1}, respectively.
	\bli
	\item[{\bfseries\upshape (i)}]
	The  {\em Besov-Morrey   space}
	$\MB(\rn)$ is defined to be the set of all distributions $f\in \SpRn$ such that
	\begin{align}\label{BM}
	\big\|f\mid \MB(\rn)\big\|=
	\bigg[\sum_{j=0}^{\infty}2^{jsq}\big\| \varphi_j \ast f\mid
	\M(\rn)\big\|^q \bigg]^{1/q} < \infty
	\end{align}
	with the usual modification made in case of $q=\fz$.
	\item[{\bfseries\upshape  (ii)}]
	Let $u\in(0,\fz)$. The  {\em Triebel-Lizorkin-Morrey  space} $\MF(\rn)$
	is defined to be the set of all distributions $f\in   \SpRn$ such that
	\begin{align}\label{FM}
	\big\|f \mid \MF(\rn)\big\|=\bigg\|\bigg[\sum_{j=0}^{\infty}2^{jsq} |
	(\varphi_j\ast f)(\cdot)|^q\bigg]^{1/q}
	\mid \M(\rn)\bigg\| <\infty
	\end{align}
	with the usual modification made in case of  $q=\fz$.
	\eli
\end{definition}

\noindent{\em Convention.} Again we adopt the usual custom to write $\MA$ instead of $\MB$ or $\MF$, when both scales of spaces are meant simultaneously in some context. 

\begin{remark}
	The  spaces $\MA(\rn)$ are
	independent of the particular choices of $\vz_0$, $\vz$ appearing in their definitions.
	They are quasi-Banach spaces
	(Banach spaces for $p,\,q\geq 1$), and $\mathcal{S}(\rn) \hookrightarrow
	\MA(\rn)\hookrightarrow \mathcal{S}'(\rn)$.  Moreover, for $u=p$
	we re-obtain the usual  Besov and Triebel-Lizorkin spaces,
	\begin{equation}
	{\cal A}^{s}_{p,p,q}(\rn) = \A(\rn) = A^{s,0}_{p,q}(\rn). \label{MB=B}
	\end{equation}
	Besov-Morrey spaces were introduced by Kozono and Yamazaki in
	\cite{KY}. They studied semi-linear heat equations and Navier-Stokes
	equations with initial data belonging to  Besov-Morrey spaces.  The
	investigations were continued by Mazzucato \cite{Maz}, where one can find the
	atomic decomposition of some spaces. The Triebel-Lizorkin-Morrey spaces
	were later introduced by  Tang and Xu \cite{TX}. We follow the
	ideas of Tang and Xu \cite{TX}, where a somewhat  different definition is proposed. The ideas were further developed by Sawano and Tanaka \cite{ST1,ST2,Saw1,Saw2}. The most systematic and general approach to the spaces of this type  can  be found in the monograph \cite{ysy} or in the  survey papers by Sickel \cite{s011,s011a}, which we also recommend for further up-to-date references on this subject. We refer to the recent monographs \cite{FHS-MS-1,FHS-MS-2} for applications.
\end{remark}

It turned out that many of the results from the classical situation have their  counterparts for the spaces $\mathcal{A}^s_{u,p,q}(\rn)$, e.\,g.,
\begin{equation} \label{elem-0}
{\mathcal A}^{s+\varepsilon}_{u,p,r}(\rn)  \hookrightarrow
\MA(\rn)\qquad\text{if}\quad \varepsilon>0, \quad r\in(0,\infty],
\end{equation}
and ${\cal A}^{s}_{u,p,q_1}(\rn)  \hookrightarrow {\cal A}^{s}_{u,p,q_2}(\rn)$ if $q_1\le q_2$. However, there also exist some differences.
Sawano proved in \cite{Saw2} that, for $s\in\real$ and $0<p< u<\infty$,
\begin{equation}\label{elem}
{\cal N}^s_{u,p,\min\{p,q\}}(\rn)\, \hookrightarrow \, \MF(\rn)\, \hookrightarrow \,{\cal N}^s_{u,p,\infty}(\rn),
\end{equation}
where, for the latter embedding, $r=\infty$ cannot be improved -- unlike
in case of $u=p$ (see \eqref{elem-tau} with $\tau=0$). More precisely,
\[
\MF(\rn)\hookrightarrow {\mathcal N}^s_{u,p,r}(\rn)\quad\text{if, and only if,}\quad r=\infty ~~ \text{or} ~~  u=p\ \text{and}\ r\ge \max\{p,\,q\}.
\]
On the other hand, Mazzucato has shown in \cite[Proposition~4.1]{Maz} that
\[
\mathcal{E}^0_{u,p,2}(\rn)=\M(\rn),\quad 1<p\leq u<\infty,
\]
in particular,
\begin{equation}\label{E-Lp}
\mathcal{E}^0_{p,p,2}(\rn)=L_p(\rn)=F^0_{p,2}(\rn),\quad p\in(1,\infty).
\end{equation}

\begin{remark}\label{N-Bt-spaces}
	Let $s$, $u$, $p$ and $q$ be as in Definition~\ref{d2.5}
	and $\tau\in[0,\fz)$.
	It is known  that   
	\begin{equation}
	\MB(\rn) \hookrightarrow  \bt(\rn) \qquad \text{with}\qquad \tau={1}/{p}- {1}/{u}, 
	\label{N-BT-emb}
	\end{equation}
    cf. \cite[Corollary~3.3]{ysy}.
	Moreover, the above embedding is proper if $\tau>0$ and $q<\infty$. If $\tau=0$ or $q=\infty$, then both spaces coincide with each other, in particular,
	\begin{equation}
	\mathcal{N}^{s}_{u,p,\infty}(\rn)  =  B^{s,\frac{1}{p}- \frac{1}{u}}_{p,\infty}(\rn).
	\label{N-BT-equal}
	\end{equation}
	As for the $F$-spaces, if $0\le \tau <{1}/{p}$,
	then
	\begin{equation}\label{fte}
	\ft(\rn)\, = \, \MF(\rn)\quad\text{with }\quad \tau =
	{1}/{p}-{1}/{u}\, ,\quad 0 < p\le u < \infty\, ;
	\end{equation}
	cf. \cite[Corollary~3.3]{ysy}. Moreover, if $p\in(0,\infty)$ and $q\in (0,\infty)$, then
	\begin{equation}\label{ftbt}
	F^{s,\, \frac{1}{p} }_{p\, ,\,q}(\rn) \, = \, F^{s}_{\infty,\,q}(\rn)\, = \, B^{s,\, \frac1q }_{q\, ,\,q}(\rn) \, ;
	\end{equation}
	cf. \cite[Propositions~3.4,~3.5]{s011} and \cite[Remark~10]{s011a}.\\
\end{remark}

\begin{remark}\label{bmo-def}
Recall that the space $\bmo(\rn)$ is covered by the above scale. More precisely, consider the local (non-homogeneous) space of functions of bounded mean oscillation, $\bmo(\rn)$, consisting of all locally integrable
functions $\ f\in \Lloc(\rn) $ satisfying that
\begin{equation*}
 \left\| f \right\|_{\bmo}:=
\sup_{|Q|\leq 1}\; \frac{1}{|Q|} \int\limits_Q |f(x)-f_Q| \dint x + \sup_{|Q|>
1}\; \frac{1}{|Q|} \int\limits_Q |f(x)| \dint x<\infty,
\end{equation*}
where $ Q $ appearing in the above definition runs over all cubes in $\rn$, and $ f_Q $ denotes the mean value of $ f $ with
respect to $ Q$, namely, $ f_Q := \frac{1}{|Q|} \;\int_Q f(x)\dint x$,
cf. \cite[2.2.2(viii)]{T-F1}. The space $\bmo(\rn)$ coincides with $F^{0}_{\infty, 2}(\rn)$,  cf. \cite[Thm.~2.5.8/2]{T-F1}. 
Hence the above result \eqref{ftbt} implies, in particular,
\begin{equation}\label{ft=bmo}
\bmo(\rn)= F^{0}_{\infty,2}(\rn)= F^{0, 1/p}_{p, 2}(\rn)= {B^{0, 1/2}_{2, 2}(\rn)}, \quad 0<p<\infty.
\end{equation}
\end{remark}

\begin{remark}\label{T-hybrid}
In contrast to this approach, Triebel followed the original Morrey-Campanato ideas to develop local spaces $\mathcal{L}^r\A(\rn)$ in \cite{t13}, and so-called `hybrid' spaces $L^r\A(\rn)$ in \cite{t14}, where $0<p<\infty$, $0<q\leq\infty$, $s\in\real$, and $-\frac{d}{p}\leq r<\infty$. This construction is based on wavelet decompositions and also combines local and global elements as in Definitions~\ref{d1} and \ref{d2.5}. However, Triebel proved in \cite[Thm.~3.38]{t14} that
\begin{equation} \label{hybrid=tau}
L^r\A(\rn) = \at(\rn), \qquad \tau=\frac1p+\frac{r}{d},
\end{equation}
in all admitted cases. We return to this coincidence below.
\end{remark}

As mentioned previously, in this paper we are interested in studying embeddings of type 
\begin{equation*}
\id_\tau : \ate \hookrightarrow \atz
\end{equation*}
on domains. To do so, we will strongly rely on the corresponding results for these spaces on $\rn$, obtained in \cite{YHSY}. In order to make the reading easier, we recall those results here. We begin with the situation of 
  Besov-type spaces where the results can be found in \cite[Theorems~2.4,~2.5]{YHSY}.

\begin{theorem}[\cite{YHSY}]\label{B-rn}
	Let  $s_i\in \real$, $0<q_i\leq\infty$, $0<p_i\leq \infty$ and $\tau_i\geq 0$, $i=1,2$. 
	\begin{itemize}
		\item[{\bfseries\upshape (i)}] Let $\tau_2 > \frac{1}{p_2}$ or $\tau_2=\frac{1}{p_2}$ and $q_2=\infty$. Then the embedding
		\begin{equation}\label{eq:B-rn}
		\bte(\rn) \hookrightarrow \btz(\rn)
		\end{equation}
		holds if, and only if, $\quad \displaystyle\frac{s_1-s_2}{d}\geq \frac{1}{p_1}- \tau_1 - \frac{1}{p_2}+\tau_2$. 
		\item[{\bfseries\upshape (ii)}] Let $\tau_1 > \frac{1}{p_1}$ or $\tau_1=\frac{1}{p_1}$ and $q_1=\infty$. Then the embedding \eqref{eq:B-rn} holds if, and only if,
		\begin{align*}
		&\qquad \frac{s_1-s_2}{d}> \frac{1}{p_1}- \tau_1 - \frac{1}{p_2}+\tau_2 \quad \mbox{and} \quad \tau_2 \geq \frac{1}{p_2}\\
		\mbox{or} & \qquad \frac{s_1-s_2}{d}= \frac{1}{p_1}- \tau_1 - \frac{1}{p_2}+\tau_2 \quad \mbox{and} \quad \tau_2 > \frac{1}{p_2}\\
		\mbox{or} & \qquad  \frac{s_1-s_2}{d}= \frac{1}{p_1}- \tau_1 - \frac{1}{p_2}+\tau_2 \quad \mbox{and} \quad \tau_2 = \frac{1}{p_2} \quad \mbox{and} \quad q_2=\infty.
		\end{align*}
		\item[{\bfseries\upshape (iii)}] Assume that  $\tau_i < \frac{1}{p_i}$ or $\tau_i=\frac{1}{p_i}$ and $q_i<\infty$, $i=1,2$. 
		\begin{itemize}
			\item[{\upshape (a)}] The embedding \eqref{eq:B-rn}  holds true if
			$$ \frac{1}{p_1}- \tau_1 - \frac{1}{p_2}+\tau_2\geq 0, \qquad \frac{\tau_1}{p_2}\leq \frac{\tau_2}{p_1}$$
			and
			\begin{align*}
			&\qquad \frac{s_1-s_2}{d}> \frac{1}{p_1}- \tau_1 - \frac{1}{p_2}+\tau_2\\
			\mbox{or} & \qquad\frac{s_1-s_2}{d}= \frac{1}{p_1}- \tau_1 - \frac{1}{p_2}+\tau_2 \\
			&\qquad \mbox{with} \\
			&\qquad \qquad \quad (s_1-s_2)(\tau_1-\tau_2)\neq 0, \quad \frac{\tau_1}{p_2}< \frac{\tau_2}{p_1}\\
			&\qquad  \mbox{or} \qquad (s_1-s_2)(\tau_1-\tau_2)\neq 0, \quad \frac{\tau_1}{p_2}= \frac{\tau_2}{p_1}, \quad \frac{\tau_1}{q_2}\leq \frac{\tau_2}{q_1}\\
			&\qquad  \mbox{or} \qquad (s_1-s_2)(\tau_1-\tau_2)=0, \quad q_1 \leq q_2,\\
			&\qquad \qquad \quad \mbox{and} \quad p_1\geq p_2 \quad \mbox{if} \quad s_1=s_2 \quad \mbox{and} \quad \tau_1 p_1 = \tau_2 p_2 =1. 
			\end{align*}
			\item[{\upshape (b)}] The conditions  $ \frac{1}{p_1}- \tau_1 - \frac{1}{p_2}+\tau_2\geq 0$ and $\frac{\tau_1}{p_2}\leq \frac{\tau_2}{p_1}$, and $s_1-s_2\geq \frac{d}{p_1}- d\tau_1 - \frac{d}{p_2}+d\tau_2$ as well as $q_1 \leq q_2$ if $s_1=s_2$ are also necessary for the embedding \eqref{eq:B-rn}. 
		\end{itemize}			
	\end{itemize}
\end{theorem}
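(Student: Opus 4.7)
My plan is to split the argument into the three regimes of the theorem, and in each regime exploit the identification of $\bt(\rn)$ with a classical Besov space in the critical/supercritical range of $\tau$, given by Proposition~\ref{yy02}, together with the general target control \eqref{010319}. Thus the cases (i) and (ii) are essentially reductions to classical Besov embeddings, whereas case (iii) is where the Morrey-type normalisation plays a genuine role and requires wavelet/sequence space machinery.

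For part (i), the hypothesis on $(\tau_2,p_2,q_2)$ together with Proposition~\ref{yy02} identifies the target with $B^{s_2+d(\tau_2-1/p_2)}_{\infty,\infty}(\rn)$. By \eqref{010319}, the source satisfies $\bte(\rn)\hookrightarrow B^{s_1+d(\tau_1-1/p_1)}_{\infty,\infty}(\rn)$, and a classical monotonicity of smoothness then yields the sufficient condition $s_1+d(\tau_1-1/p_1)\geq s_2+d(\tau_2-1/p_2)$. Necessity is obtained by testing against a single wavelet atom, which realises both sides simultaneously. For part (ii) the roles swap: $\bte(\rn)$ becomes a classical Besov space $B^{s_1+d(\tau_1-1/p_1)}_{\infty,\infty}(\rn)$, so the question reduces to embeddings of classical Besov spaces into Besov-type spaces, which I would verify by the wavelet characterisation using the monotonicity of $\ell_q$ and $L_p^\tau$ norms; the distinction between the three sub-regimes (strict vs.\ equal smoothness, with or without $q_2=\infty$) reflects the boundary behaviour of these monotonicities. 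Necessity again follows by testing against suitable atomic configurations, now arranged along a geometric sequence of scales or cubes.

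Part (iii) is the heart of the theorem. Here I would use the wavelet characterisation to transfer the embedding to the level of sequence spaces $\sbte\hookrightarrow\sbtz$. For sufficiency I would split the $\sup_{P\in\mathcal Q}$ defining the $\sbtz$-norm into the scales $j<j_P$ (handled by inclusions of $\ell_q$ and of the local $L_p$ spaces, using $1/p_1-\tau_1-1/p_2+\tau_2\geq 0$) and the scales $j\geq j_P$ (handled by H\"older's inequality, with the loss $|P|^{\tau_2-\tau_1}$ absorbed using $\tau_1/p_2\leq\tau_2/p_1$). The subtle limiting case $s_1-s_2=d(1/p_1-\tau_1-1/p_2+\tau_2)$ then requires a finer balancing of the $q$-summability and of the boundary identity $\tau_1 p_1=\tau_2 p_2=1$. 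Necessity is obtained by constructing a family of test sequences: a one-atom test yields the smoothness condition, a tower of atoms on shrinking cubes yields $\tau_1/p_2\leq\tau_2/p_1$, a long horizontal chain at fixed scale yields $1/p_1-\tau_1-1/p_2+\tau_2\geq 0$, and a lacunary one-atom per scale yields $q_1\leq q_2$ when $s_1=s_2$.

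The main obstacle I expect is the calibration of these test sequences in the critical equality case of (iii), where one must simultaneously account for the interplay of $s_1-s_2$, $\tau_1-\tau_2$, the ratios $\tau_1/p_2$ vs.\ $\tau_2/p_1$ and $\tau_1/q_2$ vs.\ $\tau_2/q_1$, as well as the boundary case $\tau_i p_i=1$. This is where the delicate arithmetic of the $L_p^\tau$-normalisation is unavoidable, and where the proof must proceed by a careful case analysis at the level of coefficient sequences rather than by any single unified computation.
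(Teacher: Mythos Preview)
The paper does not contain a proof of this theorem: it is quoted verbatim from \cite{YHSY} (see the citation in the theorem heading and the surrounding text, which explicitly refers the reader to \cite[Theorems~2.4,~2.5]{YHSY}). So there is no ``paper's own proof'' to compare your proposal against.

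That said, your outline is broadly consistent with the techniques used elsewhere in the paper and in the cited literature. Parts (i) and (ii) are indeed handled in \cite{YHSY} precisely via the identification of Proposition~\ref{yy02} and the embedding \eqref{010319}, reducing to classical Besov embeddings as you describe. Part (iii) in \cite{YHSY} is likewise carried out at the level of sequence spaces via the wavelet characterisation, with necessity obtained through test sequences of the type you sketch (single atoms, towers on shrinking cubes, lacunary sequences). Your identification of the main obstacle --- the limiting equality case in (iii) with its case analysis on $(s_1-s_2)(\tau_1-\tau_2)$, $\tau_1/p_2$ vs.\ $\tau_2/p_1$, and the boundary $\tau_i p_i=1$ --- is accurate, and indeed this is where \cite{YHSY} leaves a small gap between sufficient and necessary conditions (which the present paper partially closes in Corollary~\ref{Cor1-rn} and the surrounding results).
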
 

The counterpart for $F$-spaces reads as follows, we refer to \cite[Corollaries~5.8,~5.9]{YHSY} for details.

\begin{theorem}[\cite{YHSY}]\label{F-rn}
	Let  $s_i\in \real$, $0<q_i\leq\infty$, $0<p_i< \infty$ and  $\tau_i\geq 0$, $i=1,2$. 
	\begin{itemize}
		\item[{\bfseries\upshape (i)}] Let $\tau_i > \frac{1}{p_i}$ or $\tau_i=\frac{1}{p_i}$ and $q_i=\infty$, $i=1,2$. Then the embedding
		\begin{equation}\label{eq:F-rn}
		\fte(\rn) \hookrightarrow \ftz(\rn)
		\end{equation}
		holds if, and only if, $\quad \displaystyle\frac{s_1-s_2}{d}\geq \frac{1}{p_1}- \tau_1 - \frac{1}{p_2}+\tau_2$. 
		\item[{\bfseries\upshape (ii)}] Assume that  $\tau_i < \frac{1}{p_i}$, $i=1,2$. Then the embedding \eqref{eq:F-rn} holds if, and only if, 
		$$ \frac{1}{p_1}- \tau_1 - \frac{1}{p_2}+\tau_2\geq 0, \qquad \frac{\tau_1}{p_2}\leq \frac{\tau_2}{p_1}$$
		and
		\begin{align*}
		&\qquad \frac{s_1-s_2}{d}>\frac{1}{p_1} - \tau_1 - \frac{1}{p_2}+\tau_2\\\
		\mbox{or} & \qquad\frac{s_1-s_2}{d}= \frac{1}{p_1}- \tau_1 - \frac{1}{p_2}+\tau_2 \\ 
		\mbox{or} & \qquad s_1=s_2 \quad \mbox{and} \quad q_1 \leq q_2. 
		\end{align*}
	\end{itemize}
\end{theorem}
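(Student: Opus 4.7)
My plan is to handle the two parts according to the trichotomy already appearing in the hypotheses: whether both spaces satisfy $\tau_i \ge 1/p_i$ (with the endpoint condition on $q_i$), or both satisfy $\tau_i < 1/p_i$. In either regime the $\ft$-spaces collapse onto a classical scale where the embedding question has a known answer, and the proof then amounts to translating the conditions back into the original parameters.

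For part (i), I would first invoke Proposition~\ref{yy02} on each of the two spaces separately to obtain the identifications
\[
\fte(\rn) = B^{s_1 + d(\tau_1 - 1/p_1)}_{\infty,\infty}(\rn), \qquad \ftz(\rn) = B^{s_2 + d(\tau_2 - 1/p_2)}_{\infty,\infty}(\rn).
\]
The problem then reduces to the classical embedding $B^{\sigma_1}_{\infty,\infty}(\rn) \hookrightarrow B^{\sigma_2}_{\infty,\infty}(\rn)$, which is well known to hold if and only if $\sigma_1 \ge \sigma_2$. Unwinding this inequality gives the stated condition, and both sufficiency and necessity fall out simultaneously.

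For part (ii), the coincidence \eqref{fte} identifies $\fte(\rn)$ and $\ftz(\rn)$ with Triebel-Lizorkin-Morrey spaces $\MFe(\rn)$ and $\MFz(\rn)$, where $1/u_i = 1/p_i - \tau_i$. The two nontrivial inequalities in the theorem, namely $1/p_1 - \tau_1 - 1/p_2 + \tau_2 \ge 0$ and $\tau_1/p_2 \le \tau_2/p_1$, translate into the Morrey embedding conditions $u_1 \le u_2$ and $p_2/u_2 \le p_1/u_1$. From here the sufficiency strategy is transparent: when the strict smoothness inequality holds I would step down in smoothness via a Jawerth--Franke-type embedding for $\mathcal{E}$-spaces and then use \eqref{elem-0} to reach $\MFz$; when equality in smoothness holds I would pass instead through an intermediate $\mathcal{E}^{s_2}_{u,p,q}$ with suitably tuned parameters and finish by the $q$-monotonicity; and when $s_1=s_2$ with $q_1 \le q_2$, the conclusion is immediate from \eqref{elem-1-t} applied in the Morrey realm.

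Necessity in part (ii) I would establish via the wavelet characterisation of the $\ft$-scale and two complementary families of test sequences in the corresponding spaces $\sft$. Atoms concentrated at a single dyadic scale but varying in position expose the smoothness condition and produce the bound on $(s_1-s_2)/d$, while atoms distributed on a grid inside a large cube $P$ reveal the Morrey exponents and force $\tau_1/p_2 \le \tau_2/p_1$. The hard part, and the main obstacle I anticipate, is the critical equality case where $(s_1-s_2)/d$ equals the full threshold and $s_1=s_2$, since there is no smoothness margin to spend and one must combine the sharp form of Sawano's embedding \eqref{elem} with a careful rescaling analysis of atoms in $\sft$; it is here that the precise numerology of the $q$-exponents enters and requires the most delicate bookkeeping.
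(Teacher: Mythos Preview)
The paper does not prove this theorem: it is quoted verbatim from \cite{YHSY} (Corollaries~5.8 and 5.9 there) and serves only as a tool for the later arguments on domains. Consequently there is no ``paper's own proof'' to compare against.

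Your outline is nonetheless a reasonable reconstruction of how such a result is obtained. For part~(i) the reduction via Proposition~\ref{yy02} to the embedding $B^{\sigma_1}_{\infty,\infty}\hookrightarrow B^{\sigma_2}_{\infty,\infty}$ is exactly the right move. For part~(ii) the translation through \eqref{fte} into the $\MF$-scale with $1/u_i=1/p_i-\tau_i$ is correct, and your parameter dictionary ($u_1\le u_2$, $p_2/u_2\le p_1/u_1$) is accurate; the embedding problem for $\MFe(\rn)\hookrightarrow\MFz(\rn)$ is precisely what is settled in \cite{hs14}. One warning: your description of the limiting case in part~(ii) is slightly loose. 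As the theorem is stated, equality $(s_1-s_2)/d=1/p_1-\tau_1-1/p_2+\tau_2>0$ already suffices for the embedding with no $q$-condition (this is the Franke--Jawerth phenomenon in the Morrey setting), whereas the $q_1\le q_2$ condition enters only when $s_1=s_2$; make sure your ``intermediate $\mathcal{E}^{s_2}_{u,p,q}$'' step reflects this distinction rather than imposing a $q$-condition in the strict case.
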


\subsection{Spaces on domains}\label{def-domain}
Let $\Omega$ denote an open, nontrivial subset of $\rn$. We consider smoothness Morrey spaces on $\Omega$ defined by restriction. Let ${\cal D}(\Omega)$ be the set of all infinitely differentiable functions supported in $\Omega$ and denote by ${\cal D}'(\Omega)$ its dual. 
Since we are able to define the extension operator $\ext: {\cal D}(\Omega) \rightarrow  \SRn$, cf.
\cite{Saw2010}, the restriction operator $\re :   \SpRn \rightarrow {\cal D}'(\Omega)$ can be defined naturally as an adjoint operator
\[
\langle \re  (f), \varphi\rangle= \langle f, \ext (\varphi)\rangle, \quad f\in\SpRn, 
\]
where $\varphi\in \mathcal{D}(\Omega)$. We will write $f\vert_{\Omega}={\rm re } (f)$.



\begin{definition}\label{tau-spaces-Omega}
	Let $s\in\real$, $\tau\in [0,\infty)$, $q\in (0,\infty]$ and $p\in (0,\infty]$ (with $p<\infty$ in the case of $\at=\ft$). Then 
	$\at(\Omega)$ is defined by
	\[
	\at(\Omega):=\big\{f\in {\cal D}'(\Omega): f=g\vert_{\Omega} \text{ for some } g\in \at(\rn)\big\}
	\]
	endowed with the quasi-norm
	\[
	\big\|f\mid \at(\Omega)\big\|:= \inf \big\{ \|g\mid \at(\rn)\|:  f=g\vert_{\Omega}, \; g\in  \at(\rn)\big\}.
	\]
\end{definition}

\begin{remark}\label{tau-onOmega}
	The spaces $\at(\Omega)$ are quasi-Banach spaces (Banach spaces for $p,q\geq 1$).  When $\tau=0$ we re-obtain the usual Besov and Triebel-Lizorkin spaces defined on domains. For the particular case of $\Omega$ being a bounded $C^{\infty}$ domain in $\rn$, some properties were studied in \cite[Section~6.4.2]{ysy}. In particular, according to \cite[Theorem~6.13]{ysy}, for such a domain $\Omega$, there exists a linear and bounded extension operator
	\begin{equation}\label{ext-tau-1}
	\ext: \at(\Omega)\to \at(\rn),\quad \text{where}\quad 1\leq p<\infty, 0<q\leq\infty, s\in\real, \tau\geq 0,
	\end{equation}  
	such that 
	\begin{equation}\label{ext-tau-2}
	\mathrm{re} \circ \ext = \id\quad\text{in}\quad \at(\Omega),
	\end{equation}  
	where $\mathrm{re}: \at(\rn)\to \at(\Omega)$ is the restriction operator as above. 
	
	Moreover, in \cite{hms} we studied the question under what assumptions these spaces consist of regular distributions only.  \ignore{In \cite{YHMSY} we considered the approximation numbers of some special compact embedding of $\at(\Omega)$ into $L_\infty(\Omega)$.} 
\end{remark}

\begin{remark}\label{emb-Omega}
  Let us mention that we have the counterparts of many continuous embeddings stated in the previous subsection for spaces on $\rn$ when dealing with spaces restricted to bounded domains. 
  We recall them in further detail if appropriate and necessary for our arguments below.
\end{remark}

Later we shall mainly deal with Lipschitz domains. Therefore we recall the concept for convenience.
{By a Lipschitz domain we mean either a special or bounded Lipschitz domain. A special Lipschitz domain is defined as an open set $\Omega \subset \rn$ lying above a graph of a Lipschitz function $\omega: \real^{d-1} \rightarrow \real$. More precisely, 
$$
\Omega = \{(x', x_d) \in \rn : x_d > \omega(x') \},
$$
where
\begin{equation}\label{lip1}
|\omega(x')-\omega(y')|\leq A \, |x'-y'|, \quad x', y' \in \real^{d-1}. 
\end{equation}
  A bounded Lipschitz domain is a bounded domain $\Omega$ whose boundary $\partial \Omega$ can be covered by a finite number of open balls $B_k$ so that, possibly after an appropriate rotation, $\partial \Omega \cap B_k$ for each $k$ is a part of the graph of a Lipschitz function.
Let $\Omega$ be a special Lipschitz domain defined by a Lipschitz function $\omega$ that satisfies the condition \eqref{lip1}.  We put
$$
K:= \{ (x', x_n) \in \rn: |x'|<A^{-1} x_n\}
$$
and $-K :=\{-x: x\in K\}$.  Then $K$ has the property that $x+K \subset \Omega$ for any $x \in \Omega$. Moreover let $\mathcal{S}'(\Omega)$ denote the subspace of $\mathcal{D}'(\Omega)$ consisting of all distributions of finite order and of at most polynomial growth at infinity, that is, $f\in  \mathcal{S}'(\Omega)$ if, and only if, the estimate 
\[
|\langle f,\eta\rangle| \le c \sup_{x\in  \Omega, |\alpha|\le M}|\Dd^\alpha \eta(x)|(1+|x|)^M,\qquad \eta\in \mathcal{D}(\Omega), 
\] 
holds with some constants $c$ and $M$ depending on $f$.
}


\begin{remark}\label{R-lim-B}
  As already mentioned, we shall study continuous embeddings of function spaces on domains. For convenience let us recall what is well-known for the {classical} function spaces 
  $\A(\Omega)$. Let $s_i\in\real$, $0<p_i, q_i\leq\infty$, $i=1,2$, and $\Omega\subset\rn$ a bounded Lipschitz domain. Then
  \[
  \id^B_\Omega : \Be(\Omega) \hookrightarrow \Bz(\Omega)
  \]
  is continuous if, and only if, 
  \begin{align}\label{ddh-1}
    \text{either}\qquad & \frac{s_1-s_2}{d}  > \max\left\{\frac{1}{p_1}-\frac{1}{p_2}, 0\right\}\\
\nonumber
    \text{or}\qquad & \frac{s_1-s_2}{d}  = \max\left\{\frac{1}{p_1}-\frac{1}{p_2}, 0\right\}\quad\text{and}\quad q_1\leq q_2.
  \end{align}
Assume, in addition, $p_i<\infty$, $i=1,2$. Then
   \[
  \id^F_\Omega : \Fe(\Omega) \hookrightarrow \Fz(\Omega)
  \]
  is continuous if, and only if, either \eqref{ddh-1} is satisfied,
  \begin{align*}
  \text{or}\qquad & \frac{s_1-s_2}{d}  =  \max\left\{\frac{1}{p_1}-\frac{1}{p_2},0\right\}=\frac{1}{p_1}-\frac{1}{p_2}>0,\\
    \text{or}\qquad & \frac{s_1-s_2}{d}  = \max\left\{\frac{1}{p_1}-\frac{1}{p_2}, 0\right\}=0 \quad\text{and}\quad q_1\leq q_2.
  \end{align*}
For (partial) results we refer to \cite[Sect.~2.5.1]{ET}, \cite[p.~60]{t06}, and, quite recently, the extension to spaces $F^s_\infty,q(\Omega)$ in \cite[Sect.~2.6.5]{T20}; cf. also our results in \cite[Thm.~3.1]{hs12b} and  \cite[Thm.~5.2]{hs14}. However, in most of the above cases, the domain $\Omega$ is there assumed to be smooth.
\end{remark}

\subsection{Wavelet decomposition in Besov-type and Triebel-Lizorkin-type spaces}


\def\hs{\hspace{0.3cm}}
\def\wz{\widetilde}

In this section we describe the wavelet characterisation of Besov-type and Triebel-Lizorkin-type spaces proved in \cite{lsuyy}. This is a key tool when studying embedding properties of function spaces, since it allows one to transfer the problem to the corresponding sequence spaces. 

Let $\wz{\phi}$ be a scaling function on $\real$ with compact support and of sufficiently high regularity, and $\wz{\psi}$ the corresponding orthonormal wavelet. Then the tensor product ansatz yields a scaling function $\phi$ and associated wavelets $\psi_1, \dots, \psi_{2^d-1}$, all defined on $\rn$; see, e.g. \cite[Proposition~5.2]{wo97}. We suppose that 
\[
\phi, \psi_i \in C^{N_1}(\rn) \qquad \mbox{and} \qquad \supp \phi, \supp \psi_i \subset [-N_2, N_2]^d,\quad i=1, \dots, 2^d-1,
\]
for some $N_1, N_2 \in \nn$.

For $k\in\zn$, $j\in\nn_0$ and $i\in\{1,\ldots,2^d-1\}$, define
\[
\phi_{j,k}(x):= 2^{jd/2}\phi(2^jx-k)\hs \quad \text{and} \quad \hs
\psi_{i,j,k}(x):= 2^{jd/2}\psi_i(2^jx-k),\hs x\in\rn.
\]
It is well known that
\[
\int\limits_\rn \psi_{i,j,k}(x)\, x^\gamma\dint x = 0 \qquad  \mbox{if}
\qquad  |\gamma|\le N_1
\]
(see \cite[Proposition~3.1]{wo97}), and
\begin{equation*}
\{\phi_{0,k}: \ k\in\zn\}\: \cup \: \{\psi_{i,j,k}:\ k\in\zn,\
j\in\nn_0,\ i\in\{1,\ldots,2^d-1\}\}
\end{equation*}
forms an {\it orthonormal basis} of $L_2(\rn)$ (see, e.\,g., \cite[Section~3.9]{me} or \cite[Section~3.1]{t06}).
Hence
\begin{equation}\label{wavelet}
f=\sum_{k\in\zn}\, \lambda_k \, \phi_{0,k}+\sum_{i=1}^{2^d-1} \sum_{j=0}^\infty
\sum_{k\in\zn}\, \lambda_{i,j,k}\, \psi_{i,j,k}\,
\end{equation}
in $\L_2(\rn)$, where  $\lambda_k := \langle f,\,\phi_{0,k}\rangle$ and
$\lambda_{i,j,k}:= \langle f,\,\psi_{i,j,k}\rangle$. {We will denote by $\lambda(f)$ the following sequence:
\[\lambda(f) := \left(\lambda_k,\lambda_{i,j,k}\right)= \big( \langle f,\,\phi_{0,k}\rangle, \langle f,\,\psi_{i,j,k}\rangle \big).
\]}

\begin{definition}\label{dts}
	Let $s\in \rr$, $\tau\in[0,\fz)$ and $q\in(0,\fz]$.
\bli
      \item[{\bfseries\upshape (i)}] Let $p \in (0, \fz]$. The \emph{sequence space}
	${\sbt}:={\sbt}(\rn)$ is defined to be the space of all complex-valued
	sequences $t:=\{t_{i,j,m}:\ i\in\{1,\ldots,2^d-1\}, j\in\nn_0, m\in\zn\}$ such that $\|t\mid {\sbt}\|<\fz$, where
	$$\|t\mid {\sbt}\|:=
	\sup_{P\in\mathcal{Q}}\frac1{|P|^{\tau}}\left\{\sum_{j=\max\{j_P,0\}}^\fz
	2^{j(s+\frac d2-\frac dp)q} \sum_{i=1}^{2^d-1}
	\left[\sum_{m:\ Q_{j,m}\subset P}
	|t_{i,j,m}|^p\right]^{\frac qp}\right\}^{\frac 1q}, $$
	with the usual modification when $p=\infty$ or $q=\infty$.
	{\item[{\bfseries\upshape (ii)}] Let $p \in (0, \fz)$. The \emph{sequence space}
	${\sft}:={\sft}(\rn)$ is defined to be the space of all complex-valued
	sequences $t:=\{t_{i,j,m}:\ i\in\{1,\ldots,2^d-1\}, j\in\nn_0, m\in\zn\}$ such that $\|t\mid {\sft}\|<\fz$, where
	$$\|t\mid {\sft}\|:=
	\sup_{P\in\mathcal{Q}}\frac1{|P|^{\tau}}\left\{\int_P \left[\sum_{j=\max\{j_P,0\}}^\fz
	2^{j(s+\frac d2)q} \sum_{i=1}^{2^d-1}
	\sum_{m\in \zn}
	|t_{i,j,m}|^q \, \chi_{Q_{j,m}}(x)\right]^{\frac pq} \dint x \right\}^{\frac 1p}, $$
	with the usual modification when $q=\infty$.}	
\eli
\end{definition}

As a special case of  \cite[Theorem~4.12]{lsuyy}, we have the following
wavelet characterisation of $\at(\rn)$.

\begin{proposition}[\cite{lsuyy}]\label{wav-type2}
	Let $s\in\rr$, $\tau\in[0,\fz)$, $q\in(0,\fz]$. Moreover, let $N_1 \in \nn_0$ be such that, when $A=B$ and $p\in (0,\infty]$,
	\begin{equation*}
	N_1+1>\max\left\{d+\frac dp-d\tau-s,  \frac{2 d}{\min\{p,1\}}+d\tau+1, d+\frac dp+\frac d2,
	d+s, \frac dp -s, {s+d\tau}\right\},
	\end{equation*}
	and when $A=F$ and $p\in (0,\infty)$, 
	\begin{equation*}
	{N_1+1>\max\left\{d+\frac dp-d\tau-s,  \frac{2 d}{\min\{p,{q},1\}}+d\tau+1, d+\frac dp+\frac d2,
	d+s, \frac dp -s,   {s+d\tau}\right\}.}
	\end{equation*}
	Let $f\in\cs'(\rn)$. Then
	$f\in \at(\rn)$ if, and only if, $f$ can be represented as \eqref{wavelet} in $\cs'(\rn)$
	and
	\[
	\|\lambda(f)\mid \sat\|^\ast:=
	\sup_{P\in\mathcal{Q}}\frac1{|P|^{\tau}}
	\left\{\sum_{m:\ Q_{0,m}\subset P}
	| \langle f,\,\phi_{0,m}\rangle|^p\right\}^{\frac 1p} +
	\|\, \{\langle f,\,\psi_{i,j,m}\rangle\}_{i,j,m}\mid \sat\| <\infty\, .
	\]
	Moreover, $\|f \mid \at(\rn)\|$ is equivalent to
	$\|\lambda(f)\mid \sat\|^\ast.$
\end{proposition}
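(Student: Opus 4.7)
The plan is to reduce the wavelet characterisation to two ingredients: first, an atomic (respectively molecular) decomposition for $\at(\rn)$; and second, a suitable almost-diagonality property of the Gram-type matrix arising when one expands a Littlewood-Paley piece $\vz_j\ast f$ in the wavelet basis. This is the standard architecture of Frazier-Jawerth and it has been adapted to the $\tau$-scale by Yuan, Sickel and Yang. Accordingly I would prove two inequalities: (a) for any $f\in\at(\rn)$ one has $\|\lambda(f)\mid \sat\|^\ast\lesssim \|f\mid \at(\rn)\|$; and (b) given a wavelet series \eqref{wavelet} whose coefficients lie in $\sat$, the series converges in $\cs'(\rn)$ to some $f\in\at(\rn)$ with $\|f\mid \at(\rn)\|\lesssim \|\lambda(f)\mid \sat\|^\ast$.

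For (a) I would identify the dual wavelets $\psi_{i,j,m}$ and $\phi_{0,m}$ as smooth molecules: thanks to the hypothesis on $N_1$ they have enough regularity, enough moment cancellation ($\int x^\gamma \psi_{i,j,m}\,dx=0$ for $|\gamma|\le N_1$), and enough spatial localisation. The pairing $\langle f,\psi_{i,j,m}\rangle$ is then controlled pointwise (uniformly in $m$ inside a dyadic cube $P$) by a Peetre-type maximal function of $\vz_j\ast f$; taking the $\ell_p$- or $L_p(\ell_q)$-norm on dyadic cubes $P$ with the $|P|^{-\tau}$ prefactor yields exactly the $\sat$-quasi-norm. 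The Peetre maximal function inequality valid in the $\tau$-setting (proved, e.g., in \cite{ysy} / \cite{lsuyy}) absorbs the maximal operator on the $B$-side and requires the Fefferman--Stein vector-valued Morrey-type inequality on the $F$-side, which is where the lower bound on $N_1$ involving $\min\{p,q,1\}$ enters.

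For (b), I would interpret each wavelet $\psi_{i,j,m}$ as a smooth atom supported in a dilated cube $2N_2 Q_{j,m}$ with the required cancellation. The atomic decomposition theorem of \cite{ysy, lsuyy} for $\at(\rn)$ then gives directly $\|f\mid \at(\rn)\|\lesssim \|\lambda(f)\mid \sat\|^\ast$, provided $N_1$ exceeds the thresholds coming from the atom conditions (the terms $d+d/p-d\tau-s$ for cancellation, $d+s$ and $d/p-s$ for regularity, and the $s+d\tau$ term originating from the shifted Besov embedding \eqref{010319}). Convergence of the series in $\cs'(\rn)$ follows from the same molecular bounds via duality against a test function.

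The main obstacle is the supremum over dyadic cubes $P$ combined with the factor $|P|^{-\tau}$: it forces one to work locally on each $P$ and to control separately the "low-frequency" contributions from scales $j<j_P$ (which are what produce the extra $s+d\tau$ threshold on $N_1$ and force one to handle the $\phi_{0,m}$-coefficients differently from the $\psi_{i,j,m}$-coefficients) and the "high-frequency" contributions from $j\ge j_P$, where the Morrey-type maximal inequality has to survive the $|P|^{-\tau}$ normalisation. Once those two regimes are in place, the equivalence of norms follows by combining (a) and (b) with the unconditional convergence of \eqref{wavelet} guaranteed by the completeness of $\sat$ and the density argument in \cite{lsuyy}.
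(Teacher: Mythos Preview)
The paper does not actually give a proof of this proposition: it is stated as a quotation from \cite{lsuyy} (``As a special case of \cite[Theorem~4.12]{lsuyy}, we have the following wavelet characterisation of $\at(\rn)$''), with no argument supplied. Your outline follows the standard Frazier--Jawerth architecture adapted to the $\tau$-scale---atoms/molecules plus Peetre maximal inequalities and the vector-valued Morrey-type maximal estimate---which is precisely the machinery developed in \cite{ysy,lsuyy} and from which the cited Theorem~4.12 is drawn, so your approach is consistent with the source the paper relies on.
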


\section{Extension Operator}\label{extension}
As mentioned in Remark~\ref{tau-onOmega} above, if $\Omega$ is a bounded $C^{\infty}$ domain in $\rn$, an extension theorem for the spaces $\at(\Omega)$ was stated in \cite{ysy}, but with the assumption that $p \in [1, \infty )$. It is our aim in this section to establish {an extended} 
result, which holds for all $p \in (0, \infty]$ ($p\in (0,\infty)$ in the $F$-case). Similarly to what was done in \cite[Proposition~4.13]{YSY-15} for Besov-Morrey spaces $\MB$, we will follow Rychkov \cite[Theorem~2.2]{Ryc} in the construction of such an operator. We start with some preparation.\\

Let $q \in (0, \infty]$ and $\tau \in [0, \infty)$. Denote by $\mixB$ the set of all sequences $\{g_j\}_{j \in \no}$ of measurable functions on $\rn$ such that
\beq \label{mixB}
\|\{g_j\}_{j \in \no} \mid \mixB\| := \sup_{P \in \mathcal{Q}} \frac{1}{|P|^\tau} \left\{ \sum_{j = \max\{j_P, 0\}}^{\infty} \|g_j 	\mid L_p(P)\|^q\right\}^{1/q} < \infty.
\eeq
Similarly, $\mixF$ with $p\in (0, \infty)$ denotes the set of all sequences $\{g_j\}_{j \in \no}$ of measurable functions on $\rn$ such that
\beq\label{mixF}
\|\{g_j\}_{j \in \no}\mid \mixF\| := \sup_{P \in \mathcal{Q}} \frac{1}{|P|^\tau} \left\| \left\{ \sum_{j = \max\{j_P, 0\}}^{\infty} |g_j|^q\right\}^{1/q}  \mid L_p(P)\right\| < \infty.
\eeq

The first auxiliary lemma can be seen as a particular case of \cite[Lemma~2.9]{lsuyy}, where the authors considered more general {quasi-}norms. 

\begin{lemma} \label{lemma:aux1}Let $q\in (0, \infty ]$, $\tau \in [0, \infty)$. Let $D_1, D_2 \in (0, \infty)$ with $D_2 >d \tau$. For any sequence $\{g_\nu\}_{\nu \in \no}$ of measurable functions on $\rn$, consider 
$$G_j(x):= \sum_{\nu =0}^j 2^{-(j -\nu)D_2} g_\nu(x) + \sum_{\nu=j+1}^\infty2^{-(\nu-j)D_1}g_\nu(x), \quad j \in \no, \quad x \in \rn.$$
Then there exists a positive constant $C$, independent of $\{g_\nu\}_{\nu \in \no}$, such that, for all $p \in (0, \infty ]$,
$$
\|\{ G_j\}_{j \in \no} \mid \mixB\| \leq C\, \|\{g_\nu\}_{\nu \in \no} \mid \mixB\|,
$$
and, for all $p \in (0, \infty )$,
$$
\|\{ G_j\}_{j \in \no}\mid \mixF\| \leq C\, \|\{g_\nu\}_{\nu \in \no}\mid \mixF\|. 
$$
\end{lemma}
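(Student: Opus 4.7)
\medskip

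\noindent\textbf{Proof Plan.} The argument rests on a three-piece decomposition of $G_j$ governed by the dyadic cube $P$ entering the supremum in \eqref{mixB}/\eqref{mixF}. Fix $P\in\mathcal{Q}$ and set $k:=\max\{j_P,0\}$. For each $j\ge k$ I would split
\[
G_j = G_j^{\mathrm{low}} + G_j^{\mathrm{mid}} + G_j^{\mathrm{high}},
\]
where $G_j^{\mathrm{low}}$ collects the indices $0\le \nu<k$, $G_j^{\mathrm{mid}}$ those with $k\le\nu\le j$, and $G_j^{\mathrm{high}}$ those with $\nu>j$. The mid and high parts involve only $g_\nu$ with $\nu\ge k$, hence can be measured directly by the piece of the norm on $P$, while the low part must be compared with norms over strictly larger dyadic ancestors of $P$.

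For $G_j^{\mathrm{mid}}+G_j^{\mathrm{high}}$ the problem reduces, after applying the $(\min(p,1))$-triangle inequality in $L_p(P)$, to controlling $\ell_q$-convolutions of the sequence $\{\|g_\nu\mid L_p(P)\|\}_{\nu\ge k}$ with the two-sided geometric sequence $\{2^{-|n|D}\}_n$, where $D=\min\{D_1,D_2\}>0$. Young's inequality in $\ell_q$ (for $q\ge 1$) or its standard quasi-Banach analogue using $(\min(q,1))$-convexity (for $q<1$) then gives
\[
\frac1{|P|^\tau}\Bigl(\sum_{j\ge k}\|G_j^{\mathrm{mid}}+G_j^{\mathrm{high}}\mid L_p(P)\|^q\Bigr)^{1/q}\;\lesssim\;\|\{g_\nu\}\mid\mixB\|,
\]
uniformly in $P$. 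The $F$-case is handled identically but one performs the Young step pointwise in $x\in P$, exploiting $\bigl(\sum_j|G_j^{\mathrm{mid}}+G_j^{\mathrm{high}}|^q\bigr)^{1/q}\lesssim\bigl(\sum_{\nu\ge k}|g_\nu|^q\bigr)^{1/q}$ before taking $L_p(P)$.

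The delicate part is $G_j^{\mathrm{low}}$. For each $\nu<k$ there is a unique dyadic ancestor $P^{(\nu)}\supset P$ with $j_{P^{(\nu)}}=\nu$, so $\|g_\nu\mid L_p(P)\|\le \|g_\nu\mid L_p(P^{(\nu)})\|$. Writing
\[
\frac{1}{|P|^\tau}\,\|g_\nu\mid L_p(P)\|\;\le\;2^{(k-\nu)d\tau}\cdot\frac{1}{|P^{(\nu)}|^\tau}\|g_\nu\mid L_p(P^{(\nu)})\|
\]
and noting that the right-hand factor is bounded by $\|\{g_\nu\}\mid\mixB\|$, the $\ell_q(L_p^\tau)$-contribution from $G_j^{\mathrm{low}}$ becomes a double geometric sum: summing $2^{-(j-\nu)D_2 q}$ over $j\ge k$ produces a factor comparable to $2^{-(k-\nu)D_2 q}$, and combining with $2^{(k-\nu)d\tau q}$ leaves
\[
\sum_{\nu=0}^{k-1}2^{-(k-\nu)(D_2-d\tau)q},
\]
which is finite exactly because $D_2>d\tau$. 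The $F$-analogue follows the same route: one first collapses $\bigl(\sum_{j\ge k}2^{-(j-\nu)D_2 q}\bigr)^{1/q}\sim 2^{-(k-\nu)D_2}$ pointwise, and then passes from $L_p(P)$ to $L_p(P^{(\nu)})$ as above. The main (and only) obstacle is this low-scale piece: the passage to the ancestor cube $P^{(\nu)}$ must be performed with the correct bookkeeping of the volume factor $|P^{(\nu)}|^\tau/|P|^\tau$, and the hypothesis $D_2>d\tau$ is sharp in the sense that it is precisely what makes the resulting geometric series summable.
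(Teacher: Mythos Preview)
Your argument is correct and is precisely the standard mechanism behind such convolution-type estimates: the three-piece splitting relative to $k=\max\{j_P,0\}$, the Young-type $\ell_q$-convolution for the mid/high indices, and the ancestor-cube comparison for the low indices (where the factor $2^{(k-\nu)d\tau}$ coming from $|P^{(\nu)}|^\tau/|P|^\tau$ is beaten by $2^{-(k-\nu)D_2}$ exactly because $D_2>d\tau$). The paper itself gives no self-contained proof at all---it simply observes that the lemma is a particular case of \cite[Lemma~2.9]{lsuyy}, where the same inequality is proved for more general quasi-norms; your direct argument is essentially what one finds there, so there is nothing to add or correct.
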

\vspace{0.5cm}
Let $(\vz_0,\vz)$ be an admissible pair and $f \in \SpRn$. For all $j \in \no$, $a \in (0, \infty)$ and $x \in \rn$, we denote
$$
\varphi_j^{\ast, a}f(x) := \sup_{y \in \rn} \frac{|\varphi_j \ast f(y)|}{(1+2^j|x-y|)^a}. 
$$

Another tool we will need later is the characterisation of the spaces via Peetre maximal functions. For the homogeneous version of the spaces, this result was proved in \cite[Theorem~1.1]{YY}. Here we use the results proved in \cite[Theorem~5.1]{YZY15-B} and \cite[Theorem~3.11]{YZY15-F} for the more general scale of Besov-type spaces and Triebel-Lizorkin-type spaces with variable exponents, respectively, which in particular cover our spaces. Adapted to our case, the results read as follows.

\begin{theorem} \label{Peetre} Let $s \in \real$, $\tau \in [0, \infty)$, $q \in (0, \infty]$ and $(\vz_0,\vz)$ be an admissible pair. 
\begin{itemize}
\item[{\bfseries\upshape (i)}] Let $p\in(0, \infty]$ and $a > \displaystyle \frac{d}{p}+d\tau$. Then 
$$
  \|f \mid {\bt(\rn)}\| 
  \sim \| \{2^{js}\varphi_j^{\ast, a}f\}_{j \in \no}\mid \mixB\|.
$$
\item[{\bfseries\upshape (ii)}] Let  $p\in(0, \infty)$ and $a > \displaystyle \frac{d}{\min\{p,q\}}+d\tau$. Then 
$$
  \|f \mid {\ft(\rn)}\| 
  \sim \| \{2^{js}\varphi_j^{\ast, a}f\}_{j \in \no}\mid \mixF\|.
$$
\end{itemize}
\end{theorem}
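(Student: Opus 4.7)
The statement is a Peetre-maximal-function characterisation in the spirit of the classical theorems of Triebel for $A^s_{p,q}(\rn)$, and, as noted, specialises the variable-exponent results of \cite{YZY15-B, YZY15-F}. If I were proving it from scratch, I would follow the well-tested convolution/maximal-function scheme, arguing separately for the trivial and the non-trivial estimate.

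The easy direction $\|f\mid\at(\rn)\|\lesssim\|\{2^{js}\vz_j^{*,a}f\}_j\mid \ast\|$ (where $\ast$ is $\mixB$ or $\mixF$) is immediate from $|\vz_j\ast f(x)|\le \vz_j^{*,a}f(x)$ and Definition~\ref{d1}. The content is the reverse inequality. The plan is: first, pick auxiliary Schwartz functions $(\Psi_0,\Psi)$ with $\sum_{\nu\ge 0}\widehat{\Psi_\nu}\,\widehat{\vz_\nu}\equiv 1$ so that one has the Calder\'on reproducing formula $f=\sum_{\nu\ge 0}\Psi_\nu\ast \vz_\nu\ast f$ in $\cs'(\rn)$ (valid for all $f\in\cs'(\rn)$ by the usual support-in-Fourier argument). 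Apply $\vz_j\ast$ to this identity and, using the spectral supports of $\widehat{\vz_j}$ and $\widehat{\Psi_\nu}$, obtain decay in $|\nu-j|$ of arbitrary order $L$ in the kernel $\vz_j\ast\Psi_\nu$. This gives pointwise the standard Peetre-type estimate
\[
\vz_j^{*,a}f(x)\;\le\; C\sum_{\nu\ge 0}2^{-|\nu-j|L}\,2^{d(\nu-j)_+a/a}\, \bigl(\mathcal M_t(\vz_\nu\ast f)(x)\bigr),
\]
where $\mathcal M_t g:=\bigl(\mathcal M(|g|^t)\bigr)^{1/t}$ is the $t$-th power Hardy--Littlewood maximal operator, and one chooses $t<\min\{p,q\}$ (or $t<p$ in the $B$-case) so close to the minimum that $a>d/t+d\tau$ still holds; this is exactly where the threshold $a>d/p+d\tau$ resp.\ $a>d/\min\{p,q\}+d\tau$ is used.

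After multiplying by $2^{js}$ the problem reduces to bounding
\[
\Bigl\|\bigl\{\sum_{\nu\ge 0}2^{-|\nu-j|L'}\,2^{\nu s}\mathcal M_t(\vz_\nu\ast f)\bigr\}_{j}\,\big|\,\ast\Bigr\|
\;\lesssim\;\bigl\|\{2^{js}\vz_j\ast f\}_j\,\big|\,\ast\bigr\|,
\]
where $L'=L-|s|$ can be taken as large as one wishes. Here two tools come in: Lemma~\ref{lemma:aux1} (which handles exactly the geometric-series convolution in $j$ provided $L'>d\tau$), and the vector-valued Fefferman--Stein maximal inequality transplanted to $\mixF$ and $\mixB$, which holds precisely because $t<\min\{p,q\}$ resp.\ $t<p$ (the $B$-case only needs the scalar $L_p$ maximal bound, applied cube by cube). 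Combining them removes the $\mathcal M_t$ and the sum over $\nu$, leaving the desired right-hand side.

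The main technical obstacle is the vector-valued maximal inequality in the $\mixF$-quasi-norm, i.e.\ a Fefferman--Stein type bound with local $L_p$-averages controlled uniformly by the supremum over dyadic $P$. For $\tau=0$ this is the Tang--Xu--Sawano inequality; for general $\tau\in[0,\infty)$ it has been established in \cite{YZY15-F} (variable-exponent setting), and the threshold $t>d\tau$ hidden in Lemma~\ref{lemma:aux1} together with $t<\min\{p,q\}$ explains why the admissible range of $a$ has exactly the stated form. Since the statement we need is precisely the constant-exponent specialisation of Theorems~5.1 and~3.11 of those references, in the final write-up I would simply invoke them; however, the scheme above shows that no extra hypothesis on the domain or on $p,q,\tau$ beyond those already appearing in the theorem is required.
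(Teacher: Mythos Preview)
Your proposal is correct and, in its final recommendation, coincides exactly with what the paper does: the paper does not give a proof of Theorem~\ref{Peetre} at all but simply cites it as the constant-exponent specialisation of \cite[Theorem~5.1]{YZY15-B} and \cite[Theorem~3.11]{YZY15-F}. Your added sketch of the underlying Calder\'on--Peetre argument (reproducing formula, pointwise reduction to $\mathcal M_t$, Lemma~\ref{lemma:aux1} for the geometric convolution in $j$, and the Fefferman--Stein inequality in the $\tau$-setting) is accurate and explains transparently why the threshold on $a$ takes the stated form, but it goes beyond what the paper itself provides.
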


\begin{remark}\label{adm-pair} Note that in the above theorem the sequence $\{\varphi_j\}_{j \in \no}$ is built upon an admissible pair $(\varphi_0, \varphi)$, as in the Definition~\ref{d1}. Hence  
$\|f | {\bt(\rn)}\| \sim \|\{2^{js}(\varphi_j \ast f)\}_{j \in \no} \mid \mixB \|\ $ and $\|f | {\ft(\rn)}\| \sim \|\{2^{js}(\varphi_j \ast f)\}_{j \in \no}\mid \mixF\|$. 
  However, in {\cite{G20} and} \cite{GM18} the authors proved that, for {Besov-type and} Triebel-Lizorkin-type spaces with variable exponents, one can consider more general pairs of functions not only in this result, but also in the definition of the spaces.\\
  
\end{remark}

Following Rychkov \cite{Ryc}, we will prove the existence of a universal linear bounded extension operator if $\Omega$ is a Lipschitz domain, recall the explanations given at the end of Section~\ref{def-domain}. 
We first recall two results of Rychkov we shall need in our argument below.

  
\begin{lemma}[{\cite[Prop.~3.1]{Ryc}}]\label{ext_l1}
	Let $\Omega$ be a special Lipschitz domain. The distribution $f\in \mathcal{D}'(\Omega)$ belongs to  $\mathcal{S}'(\Omega)$ if and only if there exist  $g\in \mathcal{S}'(\rn)$ such that $f=g|_\Omega$. 
\end{lemma}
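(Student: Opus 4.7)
The plan is to split into the two implications; neither needs the Lipschitz structure of $\Omega$ in an essential way, so the argument is purely soft and functional-analytic. The geometric hypothesis will only become crucial in Section~\ref{extension}, when a quantitative, linear extension operator adapted to the scales $\at$ is built.

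For the easy direction, assuming $f=g|_\Omega$ with $g\in\mathcal{S}'(\rn)$, I would take an arbitrary $\eta\in\mathcal{D}(\Omega)$, denote by $\tilde{\eta}$ its extension by zero to $\rn$, and observe that $\tilde{\eta}\in\mathcal{D}(\rn)\subset \mathcal{S}(\rn)$. The definition of $\re$ gives $\langle f,\eta\rangle = \langle g,\tilde{\eta}\rangle$, and the tempered estimate for $g$ yields
\[
|\langle f,\eta\rangle|\le C\sup_{|\alpha|\le N,\,x\in\rn}(1+|x|)^N\,|\Dd^\alpha\tilde{\eta}(x)|.
\]
Since $\tilde{\eta}\equiv 0$ on $\rn\setminus\Omega$, the supremum may be restricted to $x\in\Omega$, which is precisely the seminorm used in the definition of $\mathcal{S}'(\Omega)$.

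For the converse, my plan is to invoke the Hahn--Banach theorem in its seminorm form. Extension by zero, $\eta\mapsto\tilde{\eta}$, embeds $\mathcal{D}(\Omega)$ linearly as a subspace of $\mathcal{S}(\rn)$; and the hypothesis $f\in\mathcal{S}'(\Omega)$ reads exactly as $|\langle f,\eta\rangle|\le C\,p_M(\tilde{\eta})$, where $p_M(\varphi):=\sup_{|\alpha|\le M}\sup_{x\in\rn}(1+|x|)^M|\Dd^\alpha\varphi(x)|$ is one of the standard seminorms generating the Schwartz topology. Hahn--Banach then extends $f$, viewed on this subspace, to a linear functional $g$ on all of $\mathcal{S}(\rn)$ satisfying the same bound $|\langle g,\varphi\rangle|\le C\,p_M(\varphi)$. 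Hence $g\in\mathcal{S}'(\rn)$, and by construction $\langle g,\tilde{\eta}\rangle = \langle f,\eta\rangle$ for every $\eta\in\mathcal{D}(\Omega)$, so that $g|_\Omega = f$.

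The main limitation of this plan is also its main simplicity: Hahn--Banach is purely existential, so the construction is non-linear in $f$ and quantifies nothing at the level of the $\at$-scales. The genuinely hard work postponed to the rest of Section~\ref{extension} is to replace this abstract extension by an explicit, linear, universal operator $\Ext$ that is simultaneously bounded on the full family of Besov- and Triebel--Lizorkin-type spaces on $\Omega$; it is there that the reflection/averaging procedure of Rychkov, together with the Lipschitz structure of $\Omega$, becomes indispensable.
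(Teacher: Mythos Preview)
Your argument is correct. The paper does not supply its own proof of this lemma; it simply quotes the statement from Rychkov \cite[Prop.~3.1]{Ryc} and uses it as a black box. Your Hahn--Banach extension is the standard route and works for any open $\Omega\subset\rn$, as you observe; the special Lipschitz hypothesis is indeed only needed later, in the construction of the \emph{linear} operator $\Ext$ in Theorem~\ref{Th:ext}.
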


\begin{lemma}[{\cite[Thm.~4.1(a)]{Ryc}}]\label{ext_l2}
	Let $\Omega$ be a special Lipschitz domain and $K$ its associated cone. Let $-K :=\{-x: x\in K\}$ be a `reflected' cone. Then there exist functions  $\phi_0,\phi, \psi_0, \psi\in\mathcal{S}(\rn)$ supported in $-K$ such that
	\begin{align}\label{ext_l2_1}
		\int_{\rn}x^\alpha \phi(x) \dint x  &= 	\int_{\rn}x^\alpha \psi(x) \dint x = 0 \quad \text{for all multi-indices}\;\alpha, 
		\intertext{and}
		 	f &= \sum_{j \in \no} \psi_j \ast \phi_j \ast f \quad \mbox{in} \quad {\mathcal D}'(\Omega), \quad \text{for any} \; f\in \mathcal{S}'(\Omega), \label{ext_l2_2}
	\end{align} 
   where  $\phi_j(\cdot)= 2^{jd}\phi(2^j\cdot)$ and $\psi_j(\cdot)= 2^{jd}\psi(2^j\cdot)$, $j\in \nn$.  
\end{lemma}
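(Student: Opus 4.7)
The plan is to follow Rychkov's construction, which produces a Calder\'on-type reproducing formula whose building blocks have one-sided support in $-K$, and then to exploit the inclusion $x+K\subset\Omega$ (valid for every $x\in\Omega$) in order to localise the formula from $\SpRn$ down to $\mathcal{D}'(\Omega)$.

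First I would pick an auxiliary $\eta\in\mathcal{D}(\rn)$ supported in the open cone $-K$ with $\widehat{\eta}(0)\ne 0$; this is possible because $-K$ has nonempty interior. To convert $\eta$ into a Schwartz function $\phi$ supported in $-K$ with vanishing moments of every order, I would iterate an operator of the form $I-2^{-d}T_{1/2}$, where $T_\lambda g(x):=g(x/\lambda)$: each application kills one additional moment on the Fourier side while leaving the cone-support intact. A parallel construction yields $\psi$ with the same properties, together with low-frequency factors $\phi_0,\psi_0\in\SRn$ supported in $-K$, arranged so that a pointwise resolution of unity
\[
\widehat{\psi_0}(\xi)\,\widehat{\phi_0}(\xi)+\sum_{j=1}^{\infty}\widehat{\psi}(2^{-j}\xi)\,\widehat{\phi}(2^{-j}\xi)\equiv 1,\qquad \xi\in\rn,
\]
holds. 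Together with the vanishing moments of $\phi,\psi$ and standard estimates on partial sums, this identity translates into the Calder\'on reproducing formula $g=\sum_{j\in\no}\psi_j\ast\phi_j\ast g$ converging in $\SpRn$ for every $g\in\SpRn$.

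Next I would localise to $\Omega$. Given $f\in\mathcal{S}'(\Omega)$, Lemma~\ref{ext_l1} supplies some $g\in\SpRn$ with $g|_\Omega=f$. Since $\phi_j$ is supported in $-K$, for each $x\in\Omega$ the test function $\phi_j(x-\cdot)$ is supported in $x+K\subset\Omega$; consequently $(\phi_j\ast g)(x)=\langle g,\phi_j(x-\cdot)\rangle$ depends only on $g|_\Omega=f$. Applying this observation twice---first to $\phi_j\ast g$ and then to $\psi_j\ast(\phi_j\ast g)$, using that $\psi_j$ is likewise supported in $-K$---shows that each summand, restricted to $\Omega$, is determined by $f$ alone. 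Restricting the $\SpRn$-identity to $\Omega$ then yields the intrinsic formula $f=\sum_{j\in\no}\psi_j\ast\phi_j\ast f$ in $\mathcal{D}'(\Omega)$.

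The main obstacle is the first step: producing a resolution of unity on the Fourier side while insisting that $\phi,\psi$ be Schwartz functions supported in the one-sided cone $-K$. The usual Littlewood--Paley choice of radial bump functions is incompatible with cone support, which forces the algebraic construction based on iterating $I-2^{-d}T_{1/2}$, and the convergence of the resulting series in $\SpRn$ has to be verified by exploiting the cancellation coming from the vanishing moments. Once this algebraic step is in place, the localisation argument in the previous paragraph is essentially automatic.
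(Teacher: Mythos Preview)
The paper does not prove this lemma at all; it is simply quoted from Rychkov \cite[Thm.~4.1(a)]{Ryc} and used as a black box in the proof of Theorem~\ref{Th:ext}. So there is no ``paper's own proof'' to compare against, and your proposal is in fact a sketch of Rychkov's original argument rather than an alternative to anything in the present paper.

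As a sketch of Rychkov's construction your outline is broadly faithful: starting from a test function supported in $-K$ with nonzero integral, applying dyadic difference operators to manufacture vanishing moments, assembling a Fourier-side partition of unity, and then localising to $\Omega$ via the geometric fact $x+K\subset\Omega$. The localisation paragraph is correct and is exactly how one passes from the $\SpRn$-identity to $\mathcal{D}'(\Omega)$.

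One point deserves more care. You write that iterating $I-2^{-d}T_{1/2}$ ``kills one additional moment'' per application. A finite iteration yields only finitely many vanishing moments, whereas the lemma demands $\int x^\alpha\phi=0$ for \emph{all} $\alpha$, with $\phi\in\mathcal{S}(\rn)$ (not $C_0^\infty$). Rychkov handles this by a genuinely infinite construction whose limit is Schwartz but no longer compactly supported; the support remains in the closed cone $-K$ because $-K$ is dilation-invariant. Your plan implicitly acknowledges this (``the convergence of the resulting series in $\SpRn$ has to be verified''), but the passage from finite iteration to a Schwartz function with infinitely many vanishing moments is the technical heart of the construction and cannot be left as a vague limit. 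If you intend to write out a full proof, this is the step that needs explicit estimates.
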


{We can now state the main result of this section.}
	\begin{theorem}\label{Th:ext}
		Let $\Omega \subset \rn$ {be a special or bounded Lipschitz domain if $d\geq 2$, {or} an interval if $d=1$}. 
		Then there exists a linear bounded operator $\Ext$ which maps $\at(\Omega)$ into $\at(\rn)$ for all $s \in \real$, $\tau \in [0, \infty)$, $p \in (0, \infty]$ ($p<\infty$ in the $F$-case) and $q \in (0, \infty ]$, such that, for all $f \in {\mathcal D}'(\Omega)$, $\Ext f|_{\Omega}=f$ in ${\mathcal D}'(\Omega)$.
	\end{theorem}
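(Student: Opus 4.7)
The plan is to adapt Rychkov's construction from \cite{Ryc} for the full scale $\at$; the local convolution identity of Lemma~\ref{ext_l2} is tailored to allow all values of $p,q\in (0,\infty]$.

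\textbf{Reduction and construction.} I first treat a special Lipschitz domain $\Omega$ with associated cone $K$. By Definition~\ref{tau-spaces-Omega} and Lemma~\ref{ext_l1}, every $f\in\at(\Omega)$ is the restriction of some $g\in\at(\rn)\subset\mathcal S'(\rn)$, so $f\in\mathcal S'(\Omega)$. Pick $\phi_0,\phi,\psi_0,\psi\in \mathcal S(\rn)$ as in Lemma~\ref{ext_l2}, all supported in $-K$, with $\phi,\psi$ satisfying the full moment conditions \eqref{ext_l2_1}. For $x\in\Omega$ and $j\in\no$ the pairing $(\phi_j\ast f)(x)$ is well defined because $\operatorname{supp}\phi_j(x-\cdot)\subset x+K\subset\Omega$. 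Extending $\phi_j\ast f$ by zero off $\Omega$, I define
$$
\Ext f(x):=\sum_{j\in \no}\bigl(\psi_j\ast\bigl[(\phi_j\ast f)\chi_\Omega\bigr]\bigr)(x),\qquad x\in\rn.
$$
The support of $\psi_j$ in $-K$ guarantees that for $x\in\Omega$ the factor $\chi_\Omega$ is redundant, so restricted to $\Omega$ the series reduces to $\sum_j \psi_j\ast\phi_j\ast f=f$ by \eqref{ext_l2_2}, which gives the reproducing property $\Ext f|_\Omega=f$.

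\textbf{Boundedness via Peetre maximal functions.} To estimate $\|\Ext f\mid\at(\rn)\|$ I use Theorem~\ref{Peetre}; by Remark~\ref{adm-pair} I am free to pick a convenient admissible pair $(\varphi_0,\varphi)$. Writing $\eta_{k,j}=\varphi_k\ast\psi_j$ and exploiting the vanishing moments and support of $\psi$, a standard Schwartz-type estimate gives, for any prescribed $L,M>0$,
$$
|\eta_{k,j}(z)|\;\lesssim\; 2^{-|k-j|L}\,2^{\min(k,j)d}\bigl(1+2^{\min(k,j)}|z|\bigr)^{-M}.
$$
Since $\psi_j$ is supported in $-K$, for any $g\in\at(\rn)$ extending $f$ one has $(\phi_j\ast f)(y)=(\phi_j\ast g)(y)$ for $y\in\Omega$. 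Convolving with $\eta_{k,j}$ and passing to the Peetre maximal function yields the pointwise bound
$$
2^{ks}\varphi_k^{\ast,a}(\Ext f)(x)\;\lesssim\;\sum_{j\in\no}2^{-|k-j|(L-|s|)}\,2^{js}\,\varphi_j^{\ast,a}g(x)
$$
after absorbing $2^{(k-j)s}$ into the geometric factor. Choosing $L$ large enough that $L-|s|>d\tau$, Lemma~\ref{lemma:aux1} with $D_1=D_2=L-|s|$ turns this estimate into
$$
\|\{2^{ks}\varphi_k^{\ast,a}(\Ext f)\}_k\mid \mixB\|\;\lesssim\;\|\{2^{js}\varphi_j^{\ast,a}g\}_j\mid \mixB\|,
$$
and the analogous inequality in $\mixF$. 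Another application of Theorem~\ref{Peetre}, followed by taking the infimum over admissible extensions $g$, delivers $\|\Ext f\mid\at(\rn)\|\lesssim\|f\mid\at(\Omega)\|$ for the full range of $s,\tau,p,q$. Linearity is built into the construction, and the operator is patently independent of $s,\tau,p,q$, hence universal.

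\textbf{Main obstacle and general domains.} The delicate point is turning the convolution identity for $\Ext f$ into the pointwise maximal-function majorisation above uniformly for small $p$ and $q$; this is where the unrestricted moment condition in \eqref{ext_l2_1} and the cone-support of $\phi,\psi$ are crucial, and where the parameter $a$ in $\varphi_k^{\ast,a}$ must be chosen after $L$ large enough to absorb the tail decay (cf.\ Theorem~\ref{Peetre} requirements $a>d/p+d\tau$ in the $B$-case and $a>d/\min\{p,q\}+d\tau$ in the $F$-case). Once the special Lipschitz case is settled, a bounded Lipschitz domain is handled by covering $\partial\Omega$ by finitely many balls $B_k$, rotating each $\Omega\cap B_k$ so that it becomes (part of) a special Lipschitz domain, and gluing the local extensions by a smooth partition of unity subordinate to $\{B_k\}$ together with an interior cutoff; the requisite pointwise-multiplier property of compactly supported smooth functions on $\at(\rn)$ is well known. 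Finally, the one-dimensional interval case reduces to the two half-lines, each a special Lipschitz domain.
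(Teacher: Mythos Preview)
Your approach is essentially Rychkov's, as in the paper, and the overall architecture is sound. Two points deserve comment.

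\emph{A genuine (but reparable) gap.} In the displayed pointwise bound you write $\varphi_j^{*,a}g(x)$ on the right-hand side, with $(\varphi_0,\varphi)$ an \emph{admissible} pair in the sense of Definition~\ref{d1}. This is not what the computation produces. The operator $\Ext f=\sum_j\psi_j\ast[(\phi_j\ast g)\chi_\Omega]$ contains $\phi_j\ast g$, built from the Rychkov kernel $\phi_j$, not $\varphi_j\ast g$; after convolving with $\eta_{k,j}=\varphi_k\ast\psi_j$ and passing to the maximal function you obtain $\phi_j^{*,a}g(x)$ on the right. To conclude you then need
\[
\big\|\{2^{js}\phi_j^{*,a}g\}_j\mid\mixB\big\|\ \lesssim\ \|g\mid\bt(\rn)\|
\]
(and likewise in $\mixF$), i.e.\ the Peetre characterisation with the non-Fourier-localised system $(\phi_0,\phi)$. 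This is \emph{not} Theorem~\ref{Peetre} as stated; it is the extension alluded to in Remark~\ref{adm-pair}. The paper runs into exactly the same issue in the last lines of its Step~2 and resolves it by citing that extension. So your proof is fixable, but you should either identify $\varphi_j$ with $\phi_j$ from the outset and invoke Remark~\ref{adm-pair}, or keep both systems and insert the missing estimate. As written, the step ``passing to the Peetre maximal function yields \dots $\varphi_j^{*,a}g(x)$'' is false.

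\emph{A simplification relative to the paper.} Your observation that $|(\phi_j\ast f)\chi_\Omega(y)|\le|\phi_j\ast g(y)|$ for \emph{all} $y\in\rn$ (trivially, since the left side vanishes off $\Omega$) gives directly $\mathcal M_N^{(\phi_j\ast f)\chi_\Omega}(x)\le\phi_j^{*,N}g(x)$ for every $x$. The paper instead reproduces Rychkov's reflection trick (the point $\tilde x$ in \eqref{phifphig}--\eqref{gjphif}) to handle $x\notin\overline\Omega$; in the present setting that detour is unnecessary, and your shortcut is cleaner. Two minor omissions: you do not check that the series defining $\Ext f$ converges in $\mathcal S'(\rn)$ (the paper does this via \eqref{eq:est0}), and the exponent $L-|s|$ in your geometric factor is slightly optimistic---the Peetre manoeuvres cost an additional $2a$---but since $L$ is at your disposal this is harmless.
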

	
	\begin{proof}
	  We concentrate now on Besov-type spaces with  $d\geq 2$ and give some details on the Triebel-Lizorkin scale at the end of the proof.   
	  We apply   the extension operator constructed by V.~Rychkov in \cite{Ryc} and follow the main  ideas of his proof. By a standard procedure (see \cite[Subsection~1.2]{Ryc}), we only need to consider the case when $\Omega$ is a special Lipschitz domain.

		 Let $\Omega$ be a special  Lipschitz domain. The spaces $\bt(\Omega)$ are defined by restriction therefore it follows from Lemma~\ref{ext_l1} and Lemma~\ref{ext_l2} that any distribution $f\in \bt(\Omega)$ can be represented in the form \eqref{ext_l2_2} with the functions $\phi$ and $\psi$ satisfying  \eqref{ext_l2_1}.

		 For any distribution $f \in {\mathcal S}'(\Omega)$, we define the mapping
		 \beq\label{eq:ext}
		 \ext f := \sum_{j \in \no} \psi_j \ast (\phi_j \ast f)_\Omega .
		 \eeq
		 Here we use the notation $g_\Omega$ to denote the extension of a function $g:\Omega \rightarrow \real$ from $\Omega$ to $\rn$ by setting
		 \beq
		 g_\Omega (x) = \begin{cases} g(x), & \mbox{if } x \in \Omega, \\ 0, & \mbox{if } x \in \rn \setminus \Omega. \end{cases} \nonumber
		 \eeq

	
		\emph{Step 1.} Let $\{g_j\}_{j \in \no}$ be a sequence of measurable functions. Moreover, let $\mathcal{M}_{N}^{g_j}(x)$ denote the Peetre maximal function of $g_j$,  namely 
		\beq
		\mathcal{M}_{N}^{g_j}(x):= \sup_{y \in \rn} \frac{|g_j(y)|}{(1+2^j|x-y|)^N} \nonumber
		\eeq
		for all $x \in \rn$ and $N \in \nn \cap \left( \frac{d}{\min\{1,p\}}, \infty\right)$.  We prove that if  $\{2^{js}\, \mathcal{M}_{N}^{g_j}\}_{j \in \no} \in \mixB $, then the series $\sum_{j \in \no} \psi_j \ast g_j$ converges in $\SpRn$ and the  $\bt(\rn)$ norm of its sum can be estimated by 
		$\|\{2^{js}\, \mathcal{M}_{N}^{g_j}\}_{j \in \no} \mid \mixB\|$.	
		
		We start with the following elementary inequality
		\begin{align}
			|\phi_\ell \ast \psi_j \ast g_j(x)| \le \mathcal{M}_{N}^{g_j}(x) 
			\int|\phi_\ell \ast \psi_j(y)|(1+2^j|y|)^N \dint y\; .  
	    \end{align}	
        Bui, Paluszy\'nski and Taibleson proved in \cite{BPT}
       that 
			\begin{align}
			\int|\phi_\ell \ast \psi_j(y)|(1+2^j|y|)^N \dint y \le C_{M,N} 2^{-|\ell-j|M}\qquad \text{for all} \; M>0, 	
		\end{align} 
	cf. \cite[Lemma 2.1]{BPT} or \cite{Ryc} the proof of Theorem 4.1. 
		We take $M>|s|-d\tau$ and put $\sigma=M- |s|$. Then 
		\beq
		2^{\ell s}|\phi_\ell \ast \psi_j \ast g_j(x)| \lesssim 2^{-|\ell-j|\sigma}2^{js} \mathcal{M}^{g_j}_{N}(x), \quad x \in \rn, \quad \ell \in \no. \nonumber
		\eeq
		If the  sequence $\{g_j\}_{j \in \no}$ is such that  $\|\{2^{js}\,\mathcal{M}^{g_j}_{N}\}_{j \in \no } \mid \mixB\| <\infty$, then there exists a constant $c$ such that for any dyadic cube $P$ we have  $\|\mathcal{M}^{g_j}_{N}|L_p(P)\| \le c |P|^\tau$. In consequence, any function $g_j$ is a tempered distribution and 
		$\psi_j \ast g_j\in \mathcal{S}'(\rn)$, $j \in \no$. 
		
		It holds true that 
		\begin{align}\label{eq:est0}
			\big\|\psi_j \ast g_j \mid B^{s-2\sigma, \tau}_{p,q}(\rn)\big\| &\lesssim \sup_{P \in\mathcal{Q}} \frac{1}{|P|^\tau} \left\{ \sum_{\ell=\max\{j_P, 0\}}^{\infty} 2^{-(\ell 2 \sigma + |\ell-j|\sigma)q}\right\}^{1/q} \bigg\|2^{js} \mathcal{M}_N^{g_j}\mid L_p(P)\bigg\| \nonumber\\
			&\lesssim 2^{-j \sigma}\, \left\|\{2^{ks}\, \mathcal{M}_N^{g_k}\}_{k \in \no}\mid \mixB\right\|,
		\end{align}
		using that $|\ell-j|\geq j-\ell$. Therefore $\sum_{j \in \no} \psi_j \ast g_j$ converges in $B^{s-2\sigma, \tau}_{p,q}(\rn)$ and hence in $\SpRn$, since $B^{s-2\sigma, \tau}_{p,q}(\rn) \hookrightarrow \SpRn$. In this way, we further have
		\beq
		2^{\ell s}\left| \phi_\ell \ast \left(\sum_{j=0}^\infty \psi_j \ast g_j \right) (x)\right| \lesssim \sum_{j=0}^\infty 2^{-|\ell-j|\sigma} 2^{js}\mathcal{M}_{N}^{g_j}(x), \quad x \in \rn, \quad \ell \in \no. \nonumber
		\eeq
		Applying this, we see that
		\begin{align}
			\bigg\| \sum_{j=0}^\infty \psi_j \ast g_j \mid \bt(\rn)\bigg\| &= \sup_{P \in \mathcal{Q}} \frac{1}{|P|^\tau} \left\{\sum_{\ell=\max\{j_P, 0\}}^{\infty} 2^{\ell sq} \left[ \int_P  \Big| \phi_\ell \ast \left(\sum_{j=0}^\infty \psi_j \ast g_j \right) (x)\Big|^p {\dint x} \right]^{q/p} \right\}^{1/q}\nonumber \\
			&\lesssim \sup_{P \in \mathcal{Q}} \frac{1}{|P|^\tau} \left\{\sum_{\ell=\max\{j_P, 0\}}^{\infty} \bigg\| \sum_{j=0}^\infty  2^{-|\ell-j|\sigma} \, 
			2^{js}\,\mathcal{M}^{g_j}_{N} \mid L_p(P)\bigg\|^{q} \right\}^{1/q}.\nonumber
		\end{align}
		Now we can apply Lemma~\ref{lemma:aux1} since  $\sigma>d\tau$. We get
		\beq \label{eq:est1} 
		\Big\| \sum_{j=0}^\infty \psi_j \ast g_j \mid \bt(\rn)\Big\| \lesssim  \sup_{P \in \mathcal{Q}} \frac{1}{|P|^\tau} \left\{\sum_{j=\max\{j_P, 0\}}^{\infty} \big\| 2^{js}\,\mathcal{M}^{g_j}_{N} \mid L_p(P) \big\|^{q} \right\}^{1/q} = \big\|\{ 2^{js}\, \mathcal{M}_{N}^{g_j}\}_{j \in \no} \mid \mixB \big\|.
		\eeq 
		
		\emph{Step 2.} Let $f \in \bt(\Omega)$.  
		Then, for any $\varepsilon \in (0, \infty)$, there exists an $h \in \bt(\rn)$ such that $h|_{\Omega}=f$ in ${\mathcal D}'(\Omega)$ and
		\beq
		\left\|h \mid \bt(\rn)\right\| \leq \left\|f \mid \bt(\Omega)\right\| + \varepsilon.
		\eeq
		We have $\phi_j \ast f(y)= \phi_j \ast h(y)$ if $y\in \Omega$  since $\supp \phi_j\subset -K$ and $y+K\subset \Omega$ for any point $y\in \Omega$. In consequence
		\begin{align}\label{phifphig}
			\sup_{y \in \Omega} \frac{|\phi_j \ast f(y)|}{(1+2^j|x-y|)^N}\,  
			\lesssim\displaystyle \sup_{y \in \rn} \frac{|\phi_j \ast h(y)|}{(1+2^j|\tilde{x}-y|)^N}, \quad & x \not\in \overline{\Omega},
		\end{align}
		where $\tilde{x}:=(x', 2\omega(x')-x_n) \in \Omega$ is the symmetric point to $x=(x',x_n) \not\in \overline{\Omega}$ with respect to $\partial \Omega$. \\

		Let $g_j := (\phi_j \ast f)_{\Omega}$ for all $j \in \no$.  It was proved in  \cite[pp.\,248]{Ryc} that
		\begin{align}\label{gjphif}
			\sup_{y \in \Omega} \frac{| g_j(y)|}{(1+2^j|x-y|)^N}\, 
			&\begin{cases}\displaystyle \,= \sup_{y \in \Omega} \frac{|\phi_j \ast f(y)|}{(1+2^j|x-y|)^N}, & x \in \Omega, \\ \, \lesssim \displaystyle\sup_{y \in \Omega} \frac{|\phi_j \ast f(y)|}{(1+2^j|\tilde{x}-y|)^N}, & x \not\in \overline{\Omega}.
			\end{cases}
		\end{align}
		Now, we conclude from  \eqref{eq:est1},  \eqref{phifphig} and \eqref{gjphif} that
		\begin{align*}
			\left\|\ext f\mid \bt(\rn)\right\| & = \Big\| \sum_{j=0}^{\infty} \psi_j \ast (\phi_j \ast f)_\Omega \mid \bt(\rn)\Big\| \lesssim 
			\left\| \{2^{js}\,\mathcal{M}_N^{g_j} 
			\}_{j \in \no} \mid \mixB\right\|\\  
			&\lesssim \sup_{P \in \mathcal{Q}} \frac{1}{|P|^\tau} \left\{ \sum_{j=\max\{j_P, 0\}}^{\infty} 2^{jsq} \Big\|\sup_{y \in \Omega} \frac{|\phi_j \ast h(y)|}{(1+2^j|\cdot -y|)^N} \mid L_p(P)\Big\|^q\right\}^{1/q}\\ 
			&\lesssim \sup_{P \in \mathcal{Q}} \frac{1}{|P|^\tau} \left\{ \sum_{j=\max\{j_P, 0\}}^{\infty} 2^{jsq} \Big\|\sup_{y \in \rn} \frac{|\phi_j \ast h(y)|}{(1+2^j|\cdot -y|)^N} \mid L_p(P)\Big\|^q\right\}^{1/q} .
		\end{align*}
		Thus the last inequalities, the  characterisation of $\bt(\rn)$ via the Peetre maximal functions stated in Theorem~\ref{Peetre}~(i) and the choice of $g$  imply that
		\beq\label{eq:final}
		\left\|\ext f\mid \bt(\rn)\right\| \lesssim \left\|h\mid \bt(\rn)\right\| \lesssim \left\|f\mid \bt(\Omega)\right\|+\varepsilon. 
		\eeq
		Letting $\varepsilon\rightarrow 0$, we then know that $\ext$ is a bounded linear operator from $\bt(\Omega)$ into $\bt(\rn)$. 
		
		Finally, since the supports of $\psi_0$ and $\psi$ lie in $-K$, it follows that
		\beq
		\ext f|_\Omega = \sum_{j=0}^{\infty} \psi_j \ast \phi_j \ast f =f\quad \mbox{in }{\mathcal D}'(\Omega).\nonumber
		\eeq
		Therefore, $\ext$ is the desired extension operator from $\bt(\Omega)$ into $\bt(\rn)$, which concludes the proof for the Besov-type spaces. \\
		
		\emph{Step 3.} The proof for the Triebel-Lizorkin-type spaces follows similarly. Therefore, we point out the differences without giving the details. The estimation of the norm $\big\|\psi_j \ast g_j \mid F^{s-2\sigma, \tau}_{p,q}(\rn)\big\|$ is the first difference, but it can be done similarly as in \eqref{eq:est0}. Afterwards, the counterpart of \eqref{eq:est1} can be obtained using again Lemma~\ref{lemma:aux1}, but now the estimate related to the spaces $\mixF$. Finally, the characterisation of $\ft(\rn)$ via the Peetre maximal functions stated in Theorem~\ref{Peetre}~(ii) leads us to obtain a similar estimate as \eqref{eq:final}.
	\end{proof}


\begin{remark}\label{remu}
	As mentioned  above  essentially the same proof for Triebel-Lizorkin-type spaces can be found in \cite{ZHS20} with additional restrictions $p,q\ge 1$.   
\ignore{	The extension operator $\ext$ given by \eqref{eq:ext} depends on $p$, $\tau$ and $s$. More precisely, for fixed $L_\phi$ and $L_\psi$ we know from  Theorem~\ref{Th:ext} that $\ext$  is the extension operator from $\bt(\Omega)$ into $\bt(\rn)$, if 
\[ L_\psi\ge L_\phi\ge \max\{\lfloor s\rfloor,  N +d\tau, +\lfloor -s\rfloor_+ \}, \qquad N \in \nn \cap \left( \frac{d}{\min\{1,p\}}, \infty\right). \] 	
\end{remark}

\begin{remark}}
Note that, in view of the coincidence \eqref{fte}, also the Triebel-Lizorkin-Morrey spaces $\MF$ are covered by our theorem. This complements the corresponding result obtained in \cite[Proposition~4.13]{YSY-15} for the class of Besov-Morrey spaces $\MB$.
Very recently similar arguments were used in \cite{ZHS20,Z21} for the construction of the extension operator, but restricted to the case $p,q\geq 1$. For the sake of completeness, we briefly sketched our proof here. 
\end{remark}
\begin{corollary}
		Let $\Omega \subset \rn$ be an interval if $d=1$ or a Lipschitz domain if $d\geq 2$. Then there exists a linear bounded operator $\ext$ which maps $\MF(\Omega)$ into $\MF(\rn)$ for all $s \in \real$, $q \in (0, \infty ]$ and $0<p\leq u< \infty$, such that, for all $f \in {\mathcal D}'(\Omega)$, $\ext f|_{\Omega}=f$ in ${\mathcal D}'(\Omega)$.
\end{corollary}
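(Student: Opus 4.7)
The plan is to reduce this to Theorem~\ref{Th:ext} via the coincidence \eqref{fte}. Given $0<p\leq u<\infty$, set
\[
\tau := \frac{1}{p}-\frac{1}{u}\in\left[0,\tfrac{1}{p}\right).
\]
Since $\tau<1/p$, the identity \eqref{fte} yields $\MF(\rn)=\ft(\rn)$ with equivalent quasi-norms, for every $s\in\real$ and $q\in(0,\infty]$.

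Next I would transfer this coincidence to the domain $\Omega$. Both $\MF(\Omega)$ and $\ft(\Omega)$ are defined by restriction (cf. Definition~\ref{tau-spaces-Omega} and the standard convention for the Morrey-smoothness scale), so the equality of quasi-norms on $\rn$ together with the definition by restriction gives $\MF(\Omega)=\ft(\Omega)$ with equivalent quasi-norms. Explicitly, for $f\in\mathcal{D}'(\Omega)$,
\[
\|f\mid \MF(\Omega)\|=\inf\{\|g\mid \MF(\rn)\|:g|_\Omega=f\}\sim\inf\{\|g\mid \ft(\rn)\|:g|_\Omega=f\}=\|f\mid \ft(\Omega)\|.
\]

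Now apply Theorem~\ref{Th:ext} with this specific choice of $\tau$: there exists a linear bounded operator $\Ext:\ft(\Omega)\to\ft(\rn)$ with $\Ext f|_\Omega=f$ in $\mathcal{D}'(\Omega)$. Combining with the coincidences above,
\[
\|\Ext f\mid \MF(\rn)\|\sim\|\Ext f\mid \ft(\rn)\|\lesssim\|f\mid \ft(\Omega)\|\sim\|f\mid \MF(\Omega)\|,
\]
so $\Ext$ is the desired universal linear bounded extension operator from $\MF(\Omega)$ into $\MF(\rn)$, and the retraction property $\Ext f|_\Omega=f$ is inherited directly from Theorem~\ref{Th:ext}. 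There is essentially no obstacle here: the only thing to check is that the range $0<p\leq u<\infty$ corresponds exactly to $\tau\in[0,1/p)$, which is precisely the range in which \eqref{fte} applies.
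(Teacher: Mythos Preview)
Your proposal is correct and follows exactly the same approach as the paper, which simply states that the result follows immediately from Theorem~\ref{Th:ext} and \eqref{fte}. Your version just spells out the details of transferring the coincidence $\ft(\rn)=\MF(\rn)$ to the domain via the restriction definition, which is implicit in the paper's one-line proof.
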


\begin{proof}
This follows immediately from Theorem~\ref{Th:ext} and \eqref{fte}.
  \end{proof}

\bigskip
{Using Theorem~\ref{Th:ext} and the wavelet decomposition of the spaces $\at(\rn)$, cf. Proposition~\ref{wav-type2}, we can now prove a result on the monotonicity of the spaces $\at(\Omega)$ regarding the parameter $\tau$, which in fact does not hold when considering the spaces on $\rn$. 
\begin{proposition}\label{taumonoton}
	Let $0< p\leq\infty$ ($p<\infty$ in the F-case), $s\in \real$, $0<q\leq\infty$,  $0\leq 
	\tau_2\le \tau_1$. Let $\Omega \subset \rn$ be {a bounded} interval if $d=1$ or {a bounded}  Lipschitz domain if $d\geq 2$. Then  
	\[
	\id_{\tau}: A^{s,\tau_1}_{p,q}(\Omega)\hookrightarrow A^{s,\tau_2}_{p,q}(\Omega).
	\]
\end{proposition}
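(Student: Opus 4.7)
The strategy is to reduce the question to $\rn$ via the extension operator, localize to get compact support, and then exploit the wavelet characterisation to compare the two $\tau$-norms on functions supported in a bounded set.

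\textbf{Step 1 (extension and localisation).} Given $f\in A^{s,\tau_1}_{p,q}(\Omega)$, I would first apply the universal extension operator from Theorem~\ref{Th:ext} to obtain $\Ext f\in A^{s,\tau_1}_{p,q}(\rn)$ with $(\Ext f)|_\Omega=f$ and $\|\Ext f\mid A^{s,\tau_1}_{p,q}(\rn)\|\lesssim \|f\mid A^{s,\tau_1}_{p,q}(\Omega)\|$. Since $\Omega$ is bounded, I can choose $\psi\in\mathcal{D}(\rn)$ with $\psi\equiv 1$ on $\overline{\Omega}$ and $\supp\psi\subset B(0,R)$ for some $R>0$. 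Set $g:=\psi\cdot \Ext f$. By the standard pointwise-multiplier property for $A^{s,\tau}_{p,q}(\rn)$ (a smooth compactly supported function is a multiplier; see \cite{ysy}), $g\in A^{s,\tau_1}_{p,q}(\rn)$ with $\|g\mid A^{s,\tau_1}_{p,q}(\rn)\|\lesssim \|\Ext f\mid A^{s,\tau_1}_{p,q}(\rn)\|$, and $g|_\Omega=f$ in $\mathcal{D}'(\Omega)$.

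\textbf{Step 2 (wavelet reduction).} Next I apply Proposition~\ref{wav-type2} to rewrite both $\|g\mid A^{s,\tau_1}_{p,q}(\rn)\|$ and $\|g\mid A^{s,\tau_2}_{p,q}(\rn)\|$ as equivalent sequence-space norms of $\lambda(g)$ in $a^{s,\tau_1}_{p,q}$ and $a^{s,\tau_2}_{p,q}$. The decisive point is that because $g$ is supported in $B(0,R)$, the coefficients $\lambda_{i,j,m}(g)$ vanish unless the cube $Q_{j,m}$ meets a fixed compact set $K_R$ depending only on $R$ and the wavelet supports.

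\textbf{Step 3 (comparison of the two $\tau$-norms on compactly supported $g$).} I would then split the supremum over dyadic cubes $P\in\mathcal{Q}$ defining the sequence norms into two regimes:
\begin{itemize}
\item If $\ell(P)\le 1$ (equivalently $j_P\ge 0$), then $|P|\le 1$ combined with $0\le \tau_2\le \tau_1$ gives $|P|^{-\tau_2}\le |P|^{-\tau_1}$, so the corresponding term in the $\tau_2$-norm is pointwise majorised by the term in the $\tau_1$-norm, uniformly in $P$.
\item If $\ell(P)>1$, I cover the (bounded) intersection $P\cap K_R$ by a number $N\le N(R,d)$ of \emph{unit} dyadic cubes $P_1,\dots,P_N$. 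Every nonzero contribution coming from cubes $Q_{j,m}\subset P$ with $j\ge\max\{j_P,0\}=0$ actually comes from some $Q_{j,m}\subset P_n$. Using the triangle inequality (or its $\ell_r$ quasi-triangle version with $r=\min\{1,q/p\}$) to split the inner sum, together with $|P|^{-\tau_2}\le 1=|P_n|^{-\tau_2}$, I obtain a bound of the corresponding term by a constant (depending on $R$, $N$, $p$, $q$) times $\max_n$ of the analogous terms at $P_n$. The latter are in turn controlled by $\|\lambda(g)\mid a^{s,\tau_2}_{p,q}\|$ taken over the unit cubes $P_n$, and since on unit cubes $|P_n|^{-\tau_2}\le |P_n|^{-\tau_1}=1$, they are bounded by the $\tau_1$-sequence norm from the previous bullet.
\end{itemize}
Both the $b$- and the $f$-sequence norms split in the same way, so the argument covers $A=B$ and $A=F$ simultaneously. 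This yields
\[
\|g\mid A^{s,\tau_2}_{p,q}(\rn)\|\;\lesssim\;\|g\mid A^{s,\tau_1}_{p,q}(\rn)\|
\]
with constant depending on $\Omega$ but not on $g$.

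\textbf{Step 4 (conclusion).} Since $g|_\Omega=f$, the definition of the restriction norm gives $\|f\mid A^{s,\tau_2}_{p,q}(\Omega)\|\le \|g\mid A^{s,\tau_2}_{p,q}(\rn)\|$, and chaining the three estimates from Steps~1--3 completes the proof of $\id_\tau$. The main obstacle is the bookkeeping in Step~3 for the large-cube regime: one must verify that the unit-scale cover of the compact set $K_R$ is uniformly controlled and that the split of the inner $\ell_p$ (or mixed $L_p(\ell_q)$) sum produces constants independent of $P$, which is precisely where compactness of the support and boundedness of $\Omega$ are used decisively; note that exactly this step fails on $\rn$, consistent with the fact that $\tau$-monotonicity does not hold there.
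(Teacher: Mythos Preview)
Your proposal is correct and follows essentially the same route as the paper: extend via Theorem~\ref{Th:ext}, localise with a smooth cut-off (pointwise multiplier), pass to wavelet sequence spaces via Proposition~\ref{wav-type2}, and then compare the $\tau_1$- and $\tau_2$-norms using that the wavelet coefficients live in a fixed bounded region. The only difference is cosmetic: in Step~3 the paper exploits dyadic nesting to observe that the supremum over $P\in\mathcal{Q}$ may be restricted to $P\subset\wz{Q}_1$ for a single fixed dyadic cube $\wz{Q}_1$ (any larger $P$ either misses the support or contains $\wz{Q}_1$, in which case its term is dominated by the $\wz{Q}_1$ term), and then uses $|P|^{-\tau_2}\le |\wz{Q}_1|^{\tau_1-\tau_2}|P|^{-\tau_1}$ directly; your unit-cube covering of $P\cap K_R$ achieves the same bound with a little more bookkeeping.
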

\begin{proof}
	Let $\wz{Q}_0$ be a dyadic cube that contains $\overline{\Omega}$ in its interior and let  $\wz{Q}$ be a (fixed) dyadic cube that contains the supports of all the functions $\psi_{i,j,k}$ and $\phi_{0,k}$ with non-empty intersection with $\wz{Q}_0$.  Let $f\in  A^{s,\tau_1}_{p,q}(\Omega)$. The compactly supported smooth functions are pointwise multipliers in $ \at(\real^d)$, cf. \cite{s011} or \cite[Theorem~6.1]{ysy} for $\tau\le \frac{1}{p}$ and Proposition~\ref{yy02} for $\tau>\frac{1}{p}$, therefore 
	\begin{equation}\label{ls4_01}
	\|f \mid \at(\Omega )\| \sim \inf \{  \|g \mid \at(\real^d )\| :\quad g\in\at(\real^d )\quad\text{and}\quad\supp g\subset \wz{Q}_0\}. 
	\end{equation}
 There exists a dyadic cube  $\widetilde{Q}_1$ such that $\langle g,\psi_{i,j,m}\rangle = \langle g,\phi_{0,m}\rangle  =0$ for any $g\in A^{s,\tau_1}_{p,q}(\real^d)$ with $\supp g\subset \wz{Q}_0$ and  $\phi_{0,\ell}$, $\psi_{i,j,m}$ such that  $ \supp \phi_{0,\ell}\nsubseteq \widetilde{Q}_1$,  $ \supp \psi_{i,j,m}\nsubseteq \widetilde{Q}_1$. Moreover there exists a positive constant $C$ such that
    		\begin{equation*}
    		\frac{ 1}{|P|^{\tau_2}} \le  C\, \frac{ 1}{|P|^{\tau_1}}\quad \text{for }\quad 0\leq\tau_2 \le \tau_1\ , 
    	\end{equation*} 
    	 if $P\subset \wz{Q}_1$. Therefore
    \begin{align}\label{ls04_02}
    	\|\lambda(g)\mid & a^{s,\tau_2}_{p,q}\|^\ast \le c \|\lambda(g)\mid a^{s,\tau_1}_{p,q}\|^\ast .
    	    \end{align}

	By Theorem~\ref{Th:ext}  there exists a linear and bounded extension operator $\ext$ from $A^{s, \tau_1}_{p,q}(\Omega)$ into $A^{s, \tau_1}_{p,q}(\real^d)$. So, using also the wavelet decomposition of these spaces, if $\varphi\in C^\infty_0(\real^d)$ is supported in $\wz{Q}_0$ and equals $1$ on $\Omega$, then  by \eqref{ls04_02}
	\begin{align}\label{ls4_03}
	\|f \mid A^{s, \tau_2}_{p,q}(\Omega)\|  & \le  C\, \|\lambda(\varphi \ext(f))\mid a^{s, \tau_2}_{p,q}\|^\ast \le  C\, \|\lambda(\varphi \ext(f))\mid a^{s, \tau_1}_{p,q}\|^\ast   \\ 
	& \le C\,  \|\varphi \ext(f)\mid A^{s, \tau_1}_{p,q}(\real^d)\|  \le C \, \|f \mid A^{s, \tau_1}_{p,q}(\Omega)\|. \nonumber
	\end{align}
	The proof is complete.	
\end{proof}}


\section{Limiting embeddings}\label{lim-emb}


We shall always assume in the sequel that $\Omega$ is a bounded Lipschitz domain in $\rn$. 
As already  mentioned, we shall deal -- different from the standard approach -- with {\em continuous} embeddings of type 
\[
\id_\tau : \ate(\Omega) \hookrightarrow \atz(\Omega)
\]
only after we studied their {\em compactness} in \cite{ghs20}.

But it will turn out that only the limiting cases are of particular interest. So we collect first what is more or less obvious. Note that we use the above notation always with the understanding that either both, source and target space, are of Besov-type ($A=B$), or both are of Triebel-Lizorkin-type ($A=F$).

For convenience we use the following abbreviation:

\begin{align}
&\critical \nonumber\\
& :=  
\max\left\{\left(\tau_2-\frac{1}{p_2}\right)_+ -\left(\tau_1-\frac{1}{p_1}\right)_+, \frac{1}{p_1} -\tau_1 - \frac{1}{p_2}+\tau_2, 
\frac{1}{p_1}-\tau_1 - \min\left\{\frac{1}{p_2}-\tau_2, \frac{1}{p_2}(1-p_1\tau_1)_+\right\}\right\}\nonumber\\
\label{gamma}
& =  \begin{cases}
\frac{1}{p_1}-\tau_1-\frac{1}{p_2}+\tau_2, & \text{if}\quad \tau_2\geq \frac{1}{p_2}, \\[1ex]
\frac{1}{p_1}-\tau_1, &\text{if}\quad  \tau_1\geq \frac{1}{p_1}, \ \tau_2< \frac{1}{p_2}, \\[1ex]
\max\{0, \frac{1}{p_1}-\tau_1-\frac{1}{p_2}+\max\{\tau_2,\frac{p_1}{p_2}\tau_1\}\}, &\text{if}\quad  \tau_1< \frac{1}{p_1}, \  \tau_2< \frac{1}{p_2}.
\end{cases}
\end{align}
{Here and in the sequel we put $p_i\tau_i=1$ in case of $p_i=\infty$ and $\tau_i=0$. Similarly we shall understand  $\frac{p_i}{p_k}=1$ if $p_i=p_k=\infty$.}

\begin{theorem}[\cite{ghs20}]  \label{cont-tau}
	Let  $s_i\in \real$, $0<q_i\leq\infty$, $0<p_i\leq \infty$ (with $p_i<\infty$ in case of $A=F$), $\tau_i\geq 0$, $i=1,2$. 
	\bli
	\item[{\bfseries\upshape (i)}]
	The embedding 
	\[
	\id_{\tau}: \ate(\Omega )\hookrightarrow \atz(\Omega )
	\]
	is compact if, and only if, 
	\begin{equation*}
	\frac{s_1-s_2}{d} > \critical.
	\end{equation*}
	\item[{\bfseries\upshape (ii)}]
	There is no continuous embedding 
	\[
	\id_{\tau}: \ate(\Omega )\hookrightarrow \atz(\Omega )
	\]
	if
	\begin{equation}\label{no-cont-tau}
	\frac{s_1-s_2}{d} < \critical.
	\end{equation}
	\eli
\end{theorem}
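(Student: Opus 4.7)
The plan is to reduce the problem to a sequence space embedding via the wavelet characterisation in Proposition~\ref{wav-type2}, combined with the extension operator from Theorem~\ref{Th:ext}. Since $\Omega$ is a bounded Lipschitz domain, only finitely many wavelets $\psi_{i,j,k}$ at each level $j$ have supports intersecting $\Omega$ (with the number of active $k$'s comparable to $2^{jd}$), and we may reduce to studying the restriction of the sequence spaces $\sat$ to those indices. Since compactness is preserved under the isomorphism $\at(\Omega) \cong \sat(\Omega)$ induced by $\Ext$ and the re-operator, both parts of the theorem can be reformulated as statements about sequence spaces, which is the form in which I would attack them.

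The next step is to split according to the three regimes in \eqref{gamma}. When $\tau_2\ge 1/p_2$ (resp.\ $\tau_1\ge 1/p_1$), Proposition~\ref{yy02} identifies the target (resp.\ source) space on $\rn$ with a classical Besov space $B^{\sigma}_{\infty,\infty}$; by boundedness of $\Omega$ and the extension theorem this identification passes to $\Omega$. The problem then reduces to embeddings of the form $B^{s_1+d(\tau_1-1/p_1)_+}_{\infty,\infty}(\Omega)\hookrightarrow B^{\sigma}_{\infty,\infty}(\Omega)$ or similar, which are understood from the classical theory recalled in Remark~\ref{R-lim-B}. In the third regime $\tau_i<1/p_i$, $i=1,2$, I would use the coincidence \eqref{fte} for $F$-spaces and the Sawano-type inclusion \eqref{N-BT-emb} together with its sharp version to compare $\at$ with the smoothness Morrey scale $\MA$, and then rely on the corresponding sufficient conditions for compactness and necessary conditions for continuity on bounded domains from \cite{hs12,hs12b,hs14,HaSk-krakow,HaSk-morrey-comp}.

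For the sufficiency part of (i), once the claim $(s_1-s_2)/d>\gamma$ is recast at the sequence-space level, I would approximate $\id_\tau$ by its truncations $\id_\tau^{(J)}$ that retain only the wavelet levels $j\le J$: each truncation has finite rank, and the tail norm can be estimated by a strict power-gain $2^{-J\varepsilon}$ with $\varepsilon=(s_1-s_2)/d-\gamma>0$, using Lemma~\ref{lemma:aux1}-style arguments on geometric series in $j$. For the necessity in (i) and the whole of (ii), I would construct explicit extremal sequences: single wavelet atoms $f_{j,k}=\psi_{i,j,k}$ concentrated in $\Omega$ saturate the first term in \eqref{gamma}; combinations $f_j = \sum_{Q_{j,m}\subset P} \psi_{i,j,m}$ filling a dyadic cube $P\subset \Omega$ with $|P|\sim 2^{-jd}$ or $|P|=1$ realise the $1/p_i-\tau_i$ and $\frac{p_1}{p_2}\tau_1$ contributions. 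For each of the three subcases of \eqref{gamma}, a specific such test sequence must be chosen so that its norm ratio in target/source scales like $2^{j[\gamma-(s_1-s_2)/d]}$, which either blows up (forcing (ii)) or merely stays bounded without being summable (forcing non-compactness at equality).

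The main obstacle is the third case $\tau_i<1/p_i$, where $\gamma$ contains the inner maximum $\max\{\tau_2,\frac{p_1}{p_2}\tau_1\}$: two genuinely different extremal configurations compete, one concentrated on a single small cube (realising $\tau_2-\tau_1+1/p_1-1/p_2$) and one spread over a larger cube of size depending on $p_1,p_2,\tau_1$ (realising $(\tfrac{p_1}{p_2}-1)\tau_1+\tfrac{1}{p_1}-\tfrac{1}{p_2}$). Matching the threshold $\gamma$ requires choosing, for each parameter constellation, the right side-length and number of active wavelet atoms, and then carefully bounding the $\sat$-norm by exploiting the supremum over dyadic cubes $P$ inside $\Omega$; the boundary cases $\tau_i=1/p_i$ with $q_i=\infty$ require invoking Proposition~\ref{yy02} separately since the identification with $B^\sigma_{\infty,\infty}$ then holds exactly. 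Handling $B$ and $F$ uniformly causes no essential new difficulty thanks to the sandwich \eqref{elem-tau}, which reduces $F$-scale bounds to $B$-scale bounds after adjusting the inner parameter $q$.
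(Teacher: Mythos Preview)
The paper does not actually prove this theorem: it is quoted verbatim from the authors' earlier work \cite{ghs20}, and the only comment in the present paper is the sentence ``This result was proved in \cite{ghs20}''. So there is no in-paper proof to compare against.

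That said, your outline is broadly along the lines of what \cite{ghs20} does: reduction to sequence spaces via wavelets, case splitting according to whether $\tau_i$ exceeds $1/p_i$ (invoking Proposition~\ref{yy02}) versus the genuine Morrey range, finite-rank truncation for the sufficiency of compactness, and explicit extremal wavelet sequences for the necessity and for part~(ii). Two small caveats. First, your appeal to Theorem~\ref{Th:ext} is logically circular in the context of \emph{this} paper, since Theorem~\ref{Th:ext} is proved here only after Theorem~\ref{cont-tau} has already been invoked; in \cite{ghs20} the transfer to sequence spaces is done without the universal extension operator, using instead localisation by smooth cut-offs and the pointwise multiplier theorem, much as in the proof of Proposition~\ref{taumonoton} here. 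Second, the phrase ``isomorphism $\at(\Omega)\cong\sat(\Omega)$'' overstates the situation: what one actually has is a retraction--coretraction pair between $\at(\Omega)$ and a complemented subspace of the full sequence space, which is enough to transfer compactness and continuity in both directions but is not an isomorphism onto all of $\sat$.
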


This result was proved in \cite{ghs20} and shows us that we are indeed left to deal with the limiting case
\[
\frac{s_1-s_2}{d}  = \critical.
\]

First we prove the following lemma that extends \cite[Theorem~3.1]{hs12b}  to the case $u=p=\infty$. We recall that 
$\mathcal{N}^{s}_{\infty,\infty,q}(\rn)=  B^{s}_{\infty,q}(\rn) $. 

\begin{lemma}\label{lemmaNN}
Let  $s_i\in \real$, $0<q_i\leq\infty$, $0<p_i\leq u_i< \infty$  or $p_i=u_i=\infty$, $i=1,2$.
Then 
\begin{align}
	\MBe(\Omega) \hookrightarrow  \mathcal{N}^{s_2}_{\infty,\infty,q_2}(\Omega) & 
	\quad \mbox{if, and only if,} \quad
	 \frac{s_1-s_2}{d}> \frac{1}{u_1}\quad\text{or}\quad  \frac{s_1-s_2}{d}= \frac{1}{u_1}\quad\text{and}\quad q_1\le q_2 ,
	\label{N-Binfty-emb}
	\intertext{and}
     \mathcal{N}^{s_1}_{\infty,\infty,q_1}(\Omega)  \hookrightarrow	\MBz(\Omega) & \quad \mbox{if, and only if,} \quad
	s_1> s_2  \quad\text{or}\quad  s_1=s_2 \quad\text{and}\quad q_1\le q_2 .
	\label{Binfty-N-emb}
	\end{align}
\end{lemma}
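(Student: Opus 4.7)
The plan is to reduce each of the two claimed embeddings, via the extension operator of Theorem~\ref{Th:ext}, to an assertion for compactly supported functions on $\rn$, and then to invoke two elementary comparisons between $\|\cdot\mid\cM_{u,p}(\rn)\|$ and $\|\cdot\mid L_\infty(\rn)\|$ applied to the frequency blocks $h=\varphi_j\ast g$ of a compactly supported extension $g$ of $f$. On the one hand, if $h$ is supported in a fixed compact set $K\supset\Omega$, splitting the defining supremum over balls $B(x,r)$ at $r\sim\mathrm{diam}(K)$ gives
$$\|h\mid\cM_{u,p}(\rn)\|\;\le\; c(K)\,\|h\mid L_\infty(\rn)\|\qquad\text{for every }0<p\le u<\infty.$$
On the other hand, if $h$ has Fourier spectrum in a ball of radius $2^{j+1}$, the pointwise Nikol'skij estimate $|h(x)|^p\lesssim 2^{jd}\int_{B(x,2^{-j})}|h(y)|^p\,\dint y$ combined with the definition of the Morrey norm on that very ball yields the reverse-type bound
$$\|h\mid L_\infty(\rn)\|\;\le\; c\,2^{jd/u}\,\|h\mid\cM_{u,p}(\rn)\|.$$

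Multiplying the first comparison by $2^{jsq}$ and summing in $j$ produces the embedding $B^s_{\infty,q}(\Omega)\hookrightarrow\MB(\Omega)$ for any $0<p\le u<\infty$; composing this with the classical Besov embedding $B^{s_1}_{\infty,q_1}(\Omega)\hookrightarrow B^{s_2}_{\infty,q_2}(\Omega)$ from Remark~\ref{R-lim-B} yields the sufficiency in \eqref{Binfty-N-emb}. The second comparison analogously delivers the sharp embedding $\MBe(\Omega)\hookrightarrow B^{s_1-d/u_1}_{\infty,q_1}(\Omega)$ whenever $u_1<\infty$, and composing with the classical embedding $B^{s_1-d/u_1}_{\infty,q_1}(\Omega)\hookrightarrow B^{s_2}_{\infty,q_2}(\Omega)$---which holds precisely under the stated condition $(s_1-s_2)/d>1/u_1$, or $(s_1-s_2)/d=1/u_1$ with $q_1\le q_2$---proves the sufficiency in \eqref{N-Binfty-emb}. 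In the endpoint cases $u_1=\infty$ in the first claim or $u_2=\infty$ in the second, one of the spaces already coincides with a classical Besov space and the assertion reduces directly to Remark~\ref{R-lim-B}, since then $1/u_i=0$.

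For the necessity, test families are built from dilates of a fixed smooth bump supported inside $\Omega$. A single atom $\psi_{j,k_j}$ at scale $2^{-j}$ satisfies $\|\psi_{j,k_j}\mid\MB^{s}_{u,p,q}\|\sim 2^{j(s-d/u)}$ and $\|\psi_{j,k_j}\mid B^{s}_{\infty,q}\|\sim 2^{js}$, so a single bump immediately forces $(s_1-s_2)/d\ge 1/u_1$ in \eqref{N-Binfty-emb}; for \eqref{Binfty-N-emb} the same test only yields the weaker bound $s_1\ge s_2-d/u_2$, so one instead packs $\sim 2^{jd}$ disjoint bumps at scale $j$ into a fixed cube $Q_0\subset\Omega$, whose Morrey factor $\|\chi_{Q_0}\mid\cM_{u_2,p_2}\|\sim |Q_0|^{1/u_2}$ is constant so that the $\MBz$ norm scales as $2^{js_2}$ just like the $B^{s_2}_{\infty,q_2}$ norm, forcing $s_1\ge s_2$. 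At the limiting exponent, superposing the test functions across scales with arbitrary coefficients $(a_j)_{j\in\no}$ reduces the alleged embedding to $\ell^{q_1}\hookrightarrow\ell^{q_2}$, which holds exactly when $q_1\le q_2$. The main technical obstacle is to verify the two comparison estimates together with the wavelet/atomic decomposition of $\MB$ uniformly in the frequency band $j$ and with the correct dependence on $\mathrm{diam}(K)$; this needs separate attention in the quasi-Banach range $\min(p,q)<1$, where several of the standard convexity inequalities used above reverse.
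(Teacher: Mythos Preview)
Your route is more self-contained than the paper's, which proceeds almost entirely by citation: for the sufficiency in \eqref{Binfty-N-emb} the paper simply chains $B^{s_1}_{\infty,q_1}(\Omega)\hookrightarrow B^{s_2}_{u_2,q_2}(\Omega)\hookrightarrow\MBz(\Omega)$ using the classical embedding on bounded domains together with the elementary inclusion $L_{u_2}=\cM_{u_2,u_2}\hookrightarrow\cM_{u_2,p_2}$; for \eqref{N-Binfty-emb} it defers the sufficiency to \cite[Theorem~3.1]{hs12b}, and for the necessity of \eqref{Binfty-N-emb} it composes the already-established \eqref{N-Binfty-emb} with \cite[Corollary~3.7]{hs12b}. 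Your Nikol'skij-inequality argument for $\MBe(\Omega)\hookrightarrow B^{s_1-d/u_1}_{\infty,q_1}(\Omega)$ and the explicit test families for necessity are legitimate alternatives that avoid those external references.

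There is, however, a genuine gap in your sufficiency argument for \eqref{Binfty-N-emb}. You invoke the comparison $\|h\mid\cM_{u,p}(\rn)\|\le c(K)\,\|h\mid L_\infty(\rn)\|$, valid for $h$ supported in a fixed compact set $K$, with $h=\varphi_j\ast g$. But even when $g$ is compactly supported, $\varphi_j\ast g$ is band-limited and therefore, by Paley--Wiener, cannot have compact support unless it vanishes. So the hypothesis of your first comparison is never satisfied by the functions you apply it to. One can try to salvage the estimate through the rapid decay of $\varphi_j\ast g$ away from $\supp g$, but that requires controlling the Morrey norm of the tails uniformly in $j$, a point you acknowledge only vaguely at the end and which is not automatic. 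The clean repair is to drop this comparison entirely and adopt the paper's two-line argument for that direction: on a bounded domain $B^{s_1}_{\infty,q_1}(\Omega)\hookrightarrow B^{s_2}_{u_2,q_2}(\Omega)$ is classical (Remark~\ref{R-lim-B}), and $B^{s_2}_{u_2,q_2}(\Omega)=\mathcal{N}^{s_2}_{u_2,u_2,q_2}(\Omega)\hookrightarrow\MBz(\Omega)$ follows from \eqref{LinM} applied level by level.
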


\begin{proof}
The necessity of the conditions in \eqref{N-Binfty-emb} follows easily by the following chain of embeddings
\[ B^{s_1}_{u_1,q_1}(\Omega)\hookrightarrow \MBe(\Omega) \hookrightarrow  \mathcal{N}^{s_2}_{\infty,\infty,q_2}(\Omega)  = B^{s_2}_{\infty,q_2}(\Omega) \]
and the properties of embeddings of classical Besov spaces. Whereas the sufficiency can be proved in the same way as in the proof of \cite[Theorem~3.1]{hs12b}. 

To prove the  second embedding it is sufficient to note that 
 \[  \mathcal{N}^{s_1}_{\infty,\infty,q_1}(\Omega) = B^{s_1}_{\infty,q_1}(\Omega) \hookrightarrow	B^{s_2}_{u_2,q_2}(\Omega) \hookrightarrow\MBz(\Omega). \] 
 On the other hand it follows from \eqref{N-Binfty-emb} that 
 \[ B^{s_1+\frac{d}{u_1}}_{u_1,q_1}(\Omega)\hookrightarrow\mathcal{N}^{s_1}_{\infty,\infty,q_1}(\Omega) \hookrightarrow \MBz(\Omega).\]
So if the last embedding holds, then it follows from \cite[Corollary~3.7]{hs12b} that $s_1>s_2$, or $s_1=s_2$ and $q_1\le q_2$.
\end{proof}

\begin{proposition}  \label{lim-tau2-large}
	Let  $s_i\in \real$, $0<q_i\leq\infty$, $0<p_i\leq \infty$ (with $p_i<\infty$ in case of $A=F$), $\tau_i\geq 0$, $i=1,2$. Assume 
	\[
	\tau_2\geq \frac{1}{p_2}\quad \text{with}\quad q_2=\infty\quad \text{if}\quad \tau_2=\frac{1}{p_2}.
	\]
	Then the embedding 
	\[
	\id_{\tau}: \ate(\Omega )\hookrightarrow \atz(\Omega )
	\]
	is continuous if, and only if,
	\[ \frac{s_1-s_2}{d}  \geq  \critical.
	\]
\end{proposition}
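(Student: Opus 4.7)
The plan is to reduce this limiting embedding to a comparison between two classical Besov spaces on $\Omega$, exploiting the fact that the target space degenerates under the standing hypothesis on $(\tau_2,p_2,q_2)$.

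First, by Proposition~\ref{yy02} the assumption $\tau_2\ge 1/p_2$ (with $q_2=\infty$ whenever $\tau_2=1/p_2$) yields $\atz(\rn)=B^{\sigma_2}_{\infty,\infty}(\rn)$ with $\sigma_2:=s_2+d(\tau_2-1/p_2)$, and by the definition of spaces on domains this identity transfers verbatim to $\Omega$, so that $\atz(\Omega)=B^{\sigma_2}_{\infty,\infty}(\Omega)$. On the source side, \eqref{010319} provides $\ate(\rn)\hookrightarrow B^{\sigma_1}_{\infty,\infty}(\rn)$ with $\sigma_1:=s_1+d(\tau_1-1/p_1)$. I will transfer this to $\Omega$ by composing the extension operator $\Ext$ from Theorem~\ref{Th:ext} with restriction: given $f\in\ate(\Omega)$, one has $\Ext f\in\ate(\rn)\hookrightarrow B^{\sigma_1}_{\infty,\infty}(\rn)$, so that $f=(\Ext f)|_\Omega\in B^{\sigma_1}_{\infty,\infty}(\Omega)$ with comparable norm. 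Combining these yields
\[
\ate(\Omega)\hookrightarrow B^{\sigma_1}_{\infty,\infty}(\Omega), \qquad \atz(\Omega)=B^{\sigma_2}_{\infty,\infty}(\Omega).
\]

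Under our hypothesis on $(\tau_2,p_2)$ the expression \eqref{gamma} collapses to $\critical=\tfrac{1}{p_1}-\tau_1-\tfrac{1}{p_2}+\tau_2$, so the condition $(s_1-s_2)/d\ge\critical$ is exactly $\sigma_1\ge\sigma_2$. The classical embedding $B^{\sigma_1}_{\infty,\infty}(\Omega)\hookrightarrow B^{\sigma_2}_{\infty,\infty}(\Omega)$ on the bounded Lipschitz domain $\Omega$ then closes the sufficiency, since in the strict case $\sigma_1>\sigma_2$ we are in the first alternative of \eqref{ddh-1}, while in the limiting case $\sigma_1=\sigma_2$ the two spaces are literally identical (both have microscopic parameter $q=\infty$, so $q_1\le q_2$ trivially holds). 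The necessity is already supplied by Theorem~\ref{cont-tau}(ii), which rules out any continuous embedding as soon as $(s_1-s_2)/d<\critical$.

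The only delicate point is the passage of \eqref{010319} from $\rn$ to $\Omega$, but this is precisely what the extension operator of Section~\ref{extension} is designed for; once it is available no further obstacle arises. Note in particular that the proof requires neither wavelet decompositions nor the fine structure of the source space: the whole argument collapses to the coincidence of the target with a classical Besov space, which is the reason this range of parameters behaves much more rigidly than the case $\tau_2<1/p_2$ treated in Theorem~\ref{lim-lim}.
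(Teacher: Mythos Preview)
Your proof is correct and follows essentially the same strategy as the paper: identify $\atz(\Omega)$ with $B^{\sigma_2}_{\infty,\infty}(\Omega)$ via Proposition~\ref{yy02}, embed the source into $B^{\sigma_1}_{\infty,\infty}(\Omega)$, and invoke Theorem~\ref{cont-tau}(ii) for the necessity. The main difference is that you handle the source space uniformly through the single embedding \eqref{010319}, whereas the paper distinguishes three sub-cases according to whether $\tau_1>1/p_1$, $\tau_1<1/p_1$, or $\tau_1=1/p_1$ with $q_1<\infty$, routing the middle case through the Besov--Morrey identification \eqref{N-BT-equal} and Lemma~\ref{lemmaNN}; your version avoids this detour and is accordingly shorter. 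One small remark: you do not actually need the extension operator to transfer \eqref{010319} to $\Omega$, since embeddings on $\rn$ pass to restriction spaces directly from the definition (if $f=g|_\Omega$ with $g\in\ate(\rn)\hookrightarrow B^{\sigma_1}_{\infty,\infty}(\rn)$, then $f\in B^{\sigma_1}_{\infty,\infty}(\Omega)$), but invoking $\Ext$ does no harm.
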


\begin{proof} Note that by Theorem~\ref{cont-tau} we are left to deal with the limiting case $s_1-s_2  = d \critical$ only. 
  In view of {Proposition}~\ref{yy02} we always have $\atz(\Omega)=B^{s_2+d(\tau_2-\frac{1}{p_2})}_{\infty,\infty}(\Omega)$ now. Assume first 
	$\tau_1\geq \frac{1}{p_1}$ with $q_1=\infty$ if $\tau_1=\frac{1}{p_1}$, then by the same result also $\ate(\Omega)=B^{s_1+d(\tau_1-\frac{1}{p_1})}_{\infty,\infty}(\Omega)$ such that $\id_\tau$ is continuous if, and only if,
	\[
	\id:B^{s_1+d(\tau_1-\frac{1}{p_1})}_{\infty,\infty}(\Omega) \hookrightarrow 
	B^{s_2+d(\tau_2-\frac{1}{p_2})}_{\infty,\infty}(\Omega).
	\]
	But this is always true if $\frac{s_1-s_2}{d}=\critical=
	\frac{1}{p_1}-\tau_1-\frac{1}{p_2}+\tau_2$, recall Remark~\ref{R-lim-B}.\\
	
	Assume next $0\leq \tau_1<\frac{1}{p_1}$. We put $\frac{1}{u_1}=\frac{1}{p_1}-\tau_1$. We first show the sufficiency of $s_1-s_2= d\critical$ for the continuity of $\id_\tau$. We use  
	\eqref{N-BT-equal}, \eqref{elem-tau}, Proposition~\ref{yy02} and Lemma~\ref{lemmaNN} to obtain
	\[\ate(\Omega) 
	 \hookrightarrow  B^{s_1,\tau_1}_{p_1,\infty} (\Omega)=\mathcal{N}^{s_1}_{u_1,p_1,\infty}(\Omega)\hookrightarrow B^{s_2+d(\tau_2-\frac{1}{p_2})}_{\infty,\infty}(\Omega)=\atz(\Omega). 
	\]
	On the other hand, for the necessity, 
	\[\mathcal{N}^{s_1}_{u_1,p_1,\min\{p_1,q_1\}}(\Omega)\hookrightarrow\ate(\Omega)\hookrightarrow  \atz(\Omega)=B^{s_2+d(\tau_2-\frac{1}{p_2})}_{\infty,\infty}(\Omega)
	\]
	and \eqref{N-Binfty-emb} implies $s_1-s_2 \  = \ d\critical$ if $\id_\tau$ is  continuous.

	It remains to consider the case $\tau_1=\frac{1}{p_1}$, $q_1<\infty$. Now we benefit from the following chains of embeddings
\begin{align}\label{tlarge1}
\ate(\Omega )\hookrightarrow  B^{s_1+d(\tau_1-\frac{1}{p_1})}_{\infty,\infty}(\Omega)\hookrightarrow  B^{s_2+d(\tau_2-\frac{1}{p_2})}_{\infty,\infty}(\Omega) = \atz(\Omega )
\end{align}
{for the sufficiency of the condition $s_1-s_2= d\critical$, and }
\begin{equation}\label{lim-13}
B^{s_1}_{\infty,\min(p_1,q_1)}(\Omega) \hookrightarrow {\ate(\Omega) \hookrightarrow \atz(\Omega)} = B^{s_2+d(\tau_2-\frac{1}{p_2})}_{\infty,\infty}(\Omega),
\end{equation}
{for its necessity, cf. \eqref{elem-tau} and \cite[Proposition~2.4]{ysy}.} 
\end{proof}

\begin{remark}
	Note that the above result is the direct counterpart of our result for spaces on $\rn$ obtained in Theorem~\ref{B-rn}~(i), since $\critical =\frac{1}{p_1}-\tau_1-\frac{1}{p_2}+\tau_2 $ in the above setting.
\end{remark}

\begin{remark}\label{R-bmo-1}
    Recall the definition of the spaces $\bmo(\rn)$ in Remark~\ref{bmo-def} and define $\bmo(\Omega)$ by restriction, that is, in analogy to Definition~\ref{tau-spaces-Omega}. Then
    \begin{equation}\label{bmo=ft=bt-O}
      \bmo(\Omega)=F^{0,1/p}_{p,2}(\Omega) = {B^{0,1/2}_{2,2}(\Omega), \quad 0<p<\infty},
    \end{equation}
    extending \eqref{ft=bmo} and \eqref{ftbt} to domains $\Omega$.
  Taking $\bmo(\Omega)$ as the source space, that is, $\tau_1=\frac{1}{p_1}$, $s_1=0$, then Proposition~\ref{lim-tau2-large} implies that for $s\in\real$, 
$0<p,q\leq\infty$, and $\tau\geq \frac{1}{p}$ {with} $q=\infty$ if $\tau=\frac{1}{p}$, then 
  \[
\id_\tau:  \bmo(\Omega) \hookrightarrow \at(\Omega)
  \]
  is continuous if, and only if,  $s\leq -d (\tau-\frac1p)\leq 0$.  If $\bmo(\Omega)$ was the target space, then Proposition~\ref{lim-tau2-large} cannot be applied since $q_2=2<\infty$. 
\end{remark}

In view of Theorem~\ref{cont-tau} and Proposition~\ref{lim-tau2-large} we are left  to study the situation
\begin{equation}\label{ddh_4-1}
\frac{s_1-s_2}{d}  = \critical\quad\text{and}\quad \tau_2\leq \frac{1}{p_2}\quad \text{with}\quad q_2<\infty\quad \text{if}\quad \tau_2=\frac{1}{p_2}
\end{equation}
in the sequel.
Next we give some counterpart of Proposition~\ref{lim-tau2-large} dealing with the case when in the source space the parameter $\tau_1$ is large.


\begin{proposition}  \label{lim-tau1-large}
	Let  $s_i\in \real$, $0<q_i\leq\infty$, $0<p_i\leq \infty$, $\tau_i\geq 0$, $i=1,2$. Assume that \eqref{ddh_4-1} is satisfied and 
	\[
	\tau_1\geq \frac{1}{p_1}\quad \text{with}\quad q_1=\infty\quad \text{if}\quad \tau_1=\frac{1}{p_1}.
	\]
	Then the embedding 
	\[
	\id_{\tau}: \ate(\Omega )\hookrightarrow \atz(\Omega )
	\]
	is continuous if, and only if,
	\[
	 q_2=\infty .
	\]
\end{proposition}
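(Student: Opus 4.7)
The proof plan splits into identifying the source space explicitly, then handling sufficiency and necessity.

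The hypothesis $\tau_1\geq 1/p_1$ (with $q_1=\infty$ at equality) combined with Proposition~\ref{yy02} identifies $\ate(\Omega)=B^{\sigma_1}_{\infty,\infty}(\Omega)$ where $\sigma_1:=s_1+d(\tau_1-1/p_1)$. Inspection of \eqref{gamma} shows that in the present regime (where in addition $\tau_2\leq 1/p_2$ by \eqref{ddh_4-1}) one always has $\critical = 1/p_1-\tau_1$; the limiting assumption $s_1-s_2=d\critical$ therefore forces $\sigma_1=s_2$, so the question collapses to deciding when $B^{s_2}_{\infty,\infty}(\Omega)\hookrightarrow\atz(\Omega)$.

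For the sufficiency direction ($q_2=\infty$), the restriction in \eqref{ddh_4-1} rules out $\tau_2=1/p_2$, so $\tau_2<1/p_2$. Applying Proposition~\ref{yy02} with $\tau=1/p_2$ and $q=\infty$ gives $A^{s_2,1/p_2}_{p_2,\infty}(\rn)=B^{s_2}_{\infty,\infty}(\rn)$ for both $A=B$ and $A=F$, and this equality transfers to $\Omega$ since the spaces are defined by restriction. Proposition~\ref{taumonoton} then furnishes the monotonicity $A^{s_2,1/p_2}_{p_2,\infty}(\Omega)\hookrightarrow A^{s_2,\tau_2}_{p_2,\infty}(\Omega)=\atz(\Omega)$, yielding the desired chain $B^{s_2}_{\infty,\infty}(\Omega)=A^{s_2,1/p_2}_{p_2,\infty}(\Omega)\hookrightarrow\atz(\Omega)$.

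For the necessity direction ($q_2<\infty$), I would exhibit a lacunary wavelet counterexample via Proposition~\ref{wav-type2}. Choose a dyadic cube $Q_0$ with $\overline{Q_0}\subset\Omega$ at a level $j_{Q_0}\geq 0$ large enough that $\supp\psi_{1,j,m}\subset\Omega$ whenever $Q_{j,m}\subset Q_0$ and $j\geq j_{Q_0}$, and set
\[
  f := \sum_{j\geq j_{Q_0}}\ \sum_{m:\ Q_{j,m}\subset Q_0}\ 2^{-j(s_2+d/2)}\,\psi_{1,j,m}.
\]
The bound $|\lambda_{1,j,m}|\leq 2^{-j(s_2+d/2)}$ and the wavelet characterisation imply $f\in B^{s_2}_{\infty,\infty}(\rn)$, hence $f|_\Omega\in B^{s_2}_{\infty,\infty}(\Omega)$. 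Suppose $f|_\Omega\in\atz(\Omega)$; pick any $g\in\atz(\rn)$ with $g|_\Omega=f|_\Omega$. Since every selected wavelet is supported in $\Omega$, $\langle g,\psi_{1,j,m}\rangle=\langle f,\psi_{1,j,m}\rangle=2^{-j(s_2+d/2)}$. Plugging these into the wavelet sequence quasi-norm on $P=Q_0$ and using $\#\{m\colon Q_{j,m}\subset Q_0\}=2^{(j-j_{Q_0})d}$, the exponents telescope: in the $B$-case the summand in $j$ becomes the $j$-independent $2^{-j_{Q_0}dq_2/p_2}$, while in the $F$-case the inner $j$-sum at every $x\in Q_0$ evaluates to $\sum_{j\geq j_{Q_0}}1$. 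Both diverge once $q_2<\infty$, forcing $\|g\mid\atz(\rn)\|=\infty$ and contradicting $g\in\atz(\rn)$.

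The main obstacle is the locality step in the necessity: one needs to choose $Q_0$ with interior margin depending on the wavelet support size $N_2$, so that the selected $\psi_{1,j,m}$ sit entirely inside $\Omega$ and the identification of wavelet coefficients of $g$ and $f$ is legitimate. Once this geometric preparation is in place, the remaining algebra is routine exponent cancellation.
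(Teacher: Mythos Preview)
Your proof is correct and covers all cases, but the argument differs from the paper's in two places worth noting.

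For the sufficiency, the paper treats $A=B$ and $A=F$ separately: in the Besov case it invokes the Besov--Morrey identification $B^{s_2,\tau_2}_{p_2,\infty}(\Omega)=\mathcal{N}^{s_2}_{u_2,p_2,\infty}(\Omega)$ together with Lemma~\ref{lemmaNN}, while in the Triebel--Lizorkin case it carries out a direct wavelet/Morrey estimate. Your route is more uniform: you use Proposition~\ref{yy02} to write $B^{s_2}_{\infty,\infty}(\Omega)=A^{s_2,1/p_2}_{p_2,\infty}(\Omega)$ and then apply the $\tau$-monotonicity of Proposition~\ref{taumonoton} to descend to $A^{s_2,\tau_2}_{p_2,\infty}(\Omega)$. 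This is shorter and treats both scales at once; the paper's separate arguments, on the other hand, are more self-contained in that the $F$-case does not rely on the extension-operator machinery behind Proposition~\ref{taumonoton}.

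For the necessity, your $B$-case counterexample is exactly the paper's. In the $F$-case the paper avoids a second computation by sandwiching $\ftz(\Omega)\hookrightarrow B^{s_2,\tau_2}_{p_2,\max\{p_2,q_2\}}(\Omega)$ via \eqref{elem-tau} and then reading off $\max\{p_2,q_2\}=\infty$ from the already-established $B$-case; since $p_2<\infty$ in the $F$-scale this forces $q_2=\infty$. Your direct verification that the inner $\ell_{q_2}$-sum at each $x\in Q_0$ equals $\sum_{j\geq j_{Q_0}}1$ is equally valid and arguably more transparent, though it duplicates the work. The geometric preparation you flag (choosing $Q_0$ so that the relevant wavelets are supported in $\Omega$) is exactly the point the paper glosses over with the phrase ``if it is not true one can easily rescale the argument''.
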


\begin{proof}
  First note that $\critical = \frac{1}{p_1}-\tau_1$  by \eqref{gamma} and thus $\ate(\Omega)=B^{s_1+d(\tau_1-\frac{1}{p_1})}_{\infty,\infty}(\Omega)=B^{s_2}_{\infty,\infty}(\Omega)$ in view of Proposition~\ref{yy02}.

We first deal with the case $A=B$ and start with the sufficiency of $q_2=\infty$. Then Proposition~\ref{lim-tau2-large} covers the case $\tau_2 =\frac{1}{p_2}$ and we may assume $\tau_2<\frac{1}{p_2}$, recall \eqref{ddh_4-1}. But in view of \eqref{010319} and \eqref{N-BT-equal} we get 
	\[
	\bte(\Omega) \hookrightarrow B^{s_1+d(\tau_1-\frac{1}{p_1})}_{\infty,\infty}(\Omega) \hookrightarrow \mathcal{N}^{s_2}_{u_2,p_2,\infty}(\Omega)= B^{s_2,\tau_2}_{p_2,\infty}(\Omega). 
	\]

Now assume $A=F$ and again $\tau_2<\frac{1}{p_2}$.	If $f\in \fte(\Omega) =B^{s_1+d(\tau_1-\frac{1}{p_1})}_{\infty,\infty}(\Omega)$, recall Proposition~\ref{yy02}, then there exists some $g\in B^{s_1+d(\tau_1-\frac{1}{p_1})}_{\infty,\infty}(\rn)$ such that $f=g|_\Omega$  We can choose $g$ such that it can be represented as in \eqref{wavelet} with  the summation over $k\in \zn$ restricted to the indices $k$ such that $|k|\le K$ for some fixed $K$ since $\Omega $ is a bounded domain. Moreover we can choose $g$ in such a way that 
	\[ \|\lambda_k \mid \ell_\infty(\zn)\| + \sup_{j\in \nn} 2^{j(s_1+d(\tau_1-\frac{1}{p_1})+\frac{d}{2})} \sup_{i=1,\ldots ,2^d-1;\;k\in \zn} |\lambda_{i,j,k}| \le C \|f\mid B^{s_1+d(\tau_1-\frac{1}{p_1})}_{\infty,\infty}(\Omega)\| ,\]
	{for some constant $C>0$ independent of $f$.}  
        We have to show that $f\in F^{s_2,\tau_2}_{p_2,\infty}(\Omega)$  and $\|f \mid F^{s_2,\tau_2}_{p_2,\infty}(\Omega) \| \le c \|f\mid  B^{s_1+d(\tau_1-\frac{1}{p_1})}_{\infty,\infty}(\Omega)\| $. 
        It is sufficient to note that for any $i=1,\ldots, 2^d-1$,
\begin{align}\label{09.05}
\left\|\sup_{j,k}  2^{j(s_2-\frac{d}{u_2}+\frac{d}{2})} |\lambda_{i,j,k}|{2^{j\frac{d}{u_2}}\chi_{j,k}(\cdot)}\ignore{\chi^{(u_2)}_{j,k}(\cdot)}\mid {\mathcal{M}_{u_2,p_2}}{(\rn)}\right\| \le C \sup_{j\in \nn} 2^{j(s_1+d(\tau_1-\frac{1}{p_1})+\frac{d}{2})} \sup_{i=1,\ldots ,2^d-1;\;k\in \zn} |\lambda_{i,j,k}| .
\end{align}
The rest follows from the wavelet characterisation of $\mathcal{E}^{s_2}_{u_2,p_2,\infty}(\rn)={F^{s_2,\tau_2}_{p_2,\infty}}(\rn)$, $\frac{1}{u_2}=\frac{1}{p_2}-\tau_2$, cf. \cite{lsuyy}. But  \eqref{09.05} follows easily from the identities $s_2=s_1+d(\tau_1-\frac{1}{p_1})$ and $\|{2^{j\frac{d}{u_2}}\chi_{j,k}(\cdot)}\ignore{\chi^{(u_2)}_{j,k}(\cdot)}\mid \mathcal{M}_{u_2,p_2}{(\rn)}\|=1$.

Now we prove the necessity and assume that $\id_\tau: \ate(\Omega) \hookrightarrow \atz(\Omega)$ is continuous. We start with the case $A=B$. If $p_2=\infty$, then $\tau=0$ by assumption \eqref{ddh_4-1} and $B^{s_2,\tau_2}_{\infty,q_2}(\Omega)= B^{s_2}_{\infty,q_2}(\Omega)$. So both the source and the target space are classical Besov spaces and it is well-known that, in that case, $q_2=\infty$, recall Remark~\ref{R-lim-B}. So it remains to consider the case  $p_2<\infty$. To simplify the notation we assume that 
	the support of the wavelets $\psi_{i,j,m}$ such that  $Q_{j,m}\subset Q_{0,0}$ are contained in $\Omega$. If it is not true one can easily rescale the argument. 
	
	We take a sequence  $\lambda=(\lambda_{i,j,m}){_{i,j,m}, i=1,...,2^d-1, j\in \no,m\in \zn,}$ defined by the formula
	\[ \lambda_{i,j,m}=
	\begin{cases}
	2^{-j(s_2+\frac{d}{2})} & {\rm if }\quad i=1 \quad {\rm and}\quad Q_{j,m}\subset Q_{0,0},\\
	0 & {\rm otherwise}.
	\end{cases}
	\] 
Then, using the sequence space version of Proposition~\ref{yy02},
\begin{equation} \nonumber 
	\|\lambda\mid \sbte\| \sim \|\lambda\mid b^{s_1+d(\tau_1-\frac{1}{p_1})}_{\infty,\infty} \| = 1 
\end{equation}
since $s_2= s_1+d(\tau_1-\frac{1}{p_1})$. Here we used the notation $b^\sigma_{p,q} = b^{\sigma,0}_{p,q}$. On the other hand, for any dyadic cube $P\subset Q_{0,0}$ and any $j\ge j_P$, we have 
\begin{align*}
2^{j(s_2+\frac{d}{2}-\frac{d}{p_2})}\left(\sum_{Q_{j,m}\subset P}|\lambda_{i,j,m}|^{p_2}\right)^{\frac{1}{p_2}} = 2^{-j_P\frac{1}{p_2}}.  
\end{align*}
So if $q_2<\infty$, then 
\begin{equation}\nonumber
\|\lambda\mid b^{s_2,\tau_2}_{p_2,q_2} \| = \infty.
\end{equation}
Therefore, if $q_2<\infty$, the function
\[
f=\sum_{i,j,m} \lambda_{i,j,m} \psi_{i,j,m} 
\]
belongs to $B^{s_1,\tau_1}_{p_1,q_1}(\Omega)$ but not to $B^{s_2,\tau_2}_{p_2,q_2}(\Omega)$, which contradicts our assumption and thus finishes the proof of the necessity for the $B$-case.
	
The case $A=F$ follows by \eqref{elem-tau} and by what we just proved for the Besov-type spaces. Note that, for the $F$-spaces, we always have $p<\infty$. Therefore, by the following chain of embeddings 
\[
B^{s_1, \tau_1}_{p_1, \min\{p_1, q_1\}}(\Omega) \hookrightarrow \fte(\Omega) \hookrightarrow \ftz (\Omega) \hookrightarrow B^{s_2, \tau_2}_{p_2, \max\{p_2,q_2\}}(\Omega),
\] 
we obtain the necessity of the condition $\max\{p_2,q_2\}=\infty$, which here reads as $q_2=\infty$. 
\end{proof}

\begin{remark}
	Note that the above result differs from its $\rn$-counterpart in Theorem~\ref{B-rn}~(ii). In that case, there is never a continuous embedding in the setting of Proposition~\ref{lim-tau1-large}, that is, when conditions \eqref{ddh_4-1} and $\tau_1\geq\frac{1}{p_1}$ with $q_1=\infty$ when $\tau_1=\frac{1}{p_1}$ are satisfied.
\end{remark}

\begin{remark}\label{R-bmo-2}
    Again we return to the special case when the source or target space of $\id_\tau$ coincides with $\bmo(\Omega)$. Parallel to Remark~\ref{R-bmo-1} we cannot apply Proposition~\ref{lim-tau1-large} in case of $\ate(\Omega)=\bmo(\Omega)$. Otherwise, if $\atz(\Omega)=\bmo(\Omega)$, then 
    Proposition~\ref{lim-tau1-large} implies that there is never a continuous embedding of type
    \[\id_\tau : \at(\Omega) \hookrightarrow \bmo(\Omega)\]
    in the limiting case \eqref{ddh_4-1} which reads here as $0<p\leq \infty$ (with $p<\infty$ in case of $A=F$), $0<q\leq \infty$, $\tau\geq 0$ and $s = d(\frac1p-\tau)$. Moreover, there is no such continuous embedding whenever $\tau_2=\frac{1}{p_2}$ and $q_2<\infty$ in the limiting case \eqref{ddh_4-1}.
\end{remark}

For the rest of this section we shall now assume that
\begin{equation}\label{ddh_4-3}
0\leq  \tau_i\leq \frac{1}{p_i}\quad \text{with}\quad q_i<\infty\quad \text{if}\quad \tau_i=\frac{1}{p_i}, \quad i=1,2, \quad{\rm and}\quad \tau_1+\tau_2>0 , 
\end{equation}
 and thus
\begin{equation}
\frac{s_1-s_2}{d}  = \critical = \max\left\{0, \frac{1}{p_1}-\tau_1-\frac{1}{p_2}+\max\left\{\tau_2,\frac{p_1}{p_2}\tau_1\right\}\right\}.
\label{ddh_4-4}
\end{equation}

In view of the embeddings and coincidences \eqref{N-BT-emb}, \eqref{N-BT-equal} and \eqref{fte}, together with our previous findings for the spaces $\MA(\Omega)$ in \cite{hs12b,hs14} (as well as some $\rn$-counterparts of $\at(\rn)$ in Theorems~\ref{B-rn} and \ref{F-rn}), we expect some $q$-dependence now. For the moment, we restrict ourselves to the case of $\bt$ spaces.

\begin{theorem}  \label{lim-lim}
	Let $0< p_1,p_2\leq\infty$, $s_i\in \real$, $0<q_i\leq\infty$,  $0\leq 
	\tau_i\leq \frac{1}{p_i}$, 
	$i=1,2$. 
	Assume that the conditions \eqref{ddh_4-3} hold and that 
	\[
	\frac{s_1-s_2}{d}  = \critical =\max\left\{0, \frac{1}{p_1}-\tau_1-\frac{1}{p_2}+\max\left\{\tau_2,\frac{p_1}{p_2}\tau_1\right\}\right\}.
	\]
	\begin{itemize} 
	\item[{\bfseries\upshape (i)}] The embedding 
	\begin{equation}\label{2.07.20}
	\id_{\tau}: \bte(\Omega )\hookrightarrow \btz(\Omega )
	\end{equation}
	is continuous if one of the following conditions holds:
	\begin{align}
	&\frac{s_1-s_2}{d} = \frac{1}{p_1}-\tau_1-\frac{1}{p_2}+\tau_2>0, 
	\quad\text{and}\quad 
	 p_1\tau_1<p_2\tau_2,
	 \label{520-1}\\
      \mbox{or} \qquad & \frac{s_1-s_2}{d}=\frac{1}{p_1}-\tau_1-\frac{1}{p_2}+\frac{p_1}{p_2}\tau_1>0 \quad \text{and}\quad  q_1\le \frac{p_1}{p_2}q_2 , \label{520-4}\\
       \mbox{or} \qquad & s_1=s_2 \quad\text{and}\quad q_1\le \min\left\{1, \frac{p_1}{p_2}\right\}q_2.\label{520-5}	
	\end{align}
	\item[{\bfseries\upshape (ii)}] 
	 If the embedding \eqref{2.07.20} is continuous and one of the following conditions holds
	  \begin{align}
	    & \critical = 0, \label{ddh_4-5}\\ 
	   \mbox{or}\qquad & \critical= \frac{1}{p_1}-\tau_1-\frac{1}{p_2}+\frac{p_1}{p_2}\tau_1>0 , \label{ddh_4-6}
	  \end{align} 
	  then $q_1\le q_2$. 
	 
	 If the embedding \eqref{2.07.20} is continuous for any $q_1$ and $q_2$  with fixed $s_1,s_2,p_1,p_2,\tau_1,\tau_2$,  then \eqref{520-1} holds.
\end{itemize}
\end{theorem}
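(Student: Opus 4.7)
My plan is to use the wavelet characterisation of Proposition~\ref{wav-type2} combined with the extension operator of Theorem~\ref{Th:ext} to reduce the embedding question on $\Omega$ to a comparison of the sequence (quasi-)norms $\|\cdot\mid \sbte\|$ and $\|\cdot \mid \sbtz\|$ for coefficients supported on wavelets whose supports sit inside a fixed dyadic cube $Q^\ast$ containing $\overline{\Omega}$. The proof then splits naturally along the three cases in the formula \eqref{ddh_4-4} for $\critical$.

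For part (i), in case \eqref{520-1} the condition $p_1\tau_1<p_2\tau_2$ allows the chain
\begin{equation*}
\bte(\Omega)\hookrightarrow B^{s_1,\tau_1}_{p_1,\infty}(\Omega)=\mathcal{N}^{s_1}_{u_1,p_1,\infty}(\Omega) \hookrightarrow \mathcal{N}^{s_2}_{u_2,p_2,\infty}(\Omega)\hookrightarrow \btz(\Omega),
\end{equation*}
with $1/u_i=1/p_i-\tau_i$: the first embedding is elementary (cf. \eqref{elem-1-t}), the equality uses \eqref{N-BT-equal}, the middle embedding is the limiting Besov–Morrey result from \cite{hs12b,hs14} (where the hypothesis $p_1\tau_1<p_2\tau_2$ translates exactly into the required relation $p_1/u_1>p_2/u_2$), and the last uses \eqref{N-BT-emb}. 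Case \eqref{520-5} ($s_1=s_2$) I would treat directly on the sequence level by a Hölder estimate inside the $p$-sum: the factor $\min\{1,p_1/p_2\}$ emerges because on each dyadic sub-cube $P$ of $Q^\ast$ one compares $\ell_{p_2}$ and $\ell_{p_1}$ norms using $q_1\le(p_1/p_2)q_2$ together with $\ell_{q_1}\hookrightarrow\ell_{q_2}$. Case \eqref{520-4} is the most delicate: there the supremum defining $\|\lambda\mid\sbtz\|$ is saturated by sub-cubes $P\subset Q^\ast$ with $j_P$ large, and one must trade the weight $|P|^{-\tau_2}$ for $|P|^{-\tau_1}$ by applying a discrete Hölder inequality across dyadic levels; the factor $p_1/p_2$ in $q_1\le(p_1/p_2)q_2$ arises precisely from this rescaling of the sub-cube exponent together with the identity $s_1-s_2=d(1/p_1-\tau_1-1/p_2+(p_1/p_2)\tau_1)$.

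For the necessary conditions in (ii) the strategy is to construct explicit wavelet test sequences $\lambda=(\lambda_{i,j,m})$ with $\supp\psi_{i,j,m}\subset\Omega$ and then compute both sequence norms. In the case \eqref{ddh_4-5} ($\critical=0$, hence $s_1=s_2$), a sequence constant on one full dyadic level within $Q^\ast$ reduces the embedding to the scalar inequality $\ell_{q_1}\hookrightarrow\ell_{q_2}$ on the level index, forcing $q_1\le q_2$. For case \eqref{ddh_4-6}, I would concentrate the sequence on wavelets contained in a single small dyadic sub-cube $P_\ell\subset Q^\ast$ of side $2^{-\ell}$, choosing the amplitudes $2^{-j(s_1+d/2-d/p_1)}$ on levels $j\ge\ell$: computing both sides with $P=P_\ell$ again collapses the comparison to an $\ell_{q_1}\hookrightarrow\ell_{q_2}$ inequality after the equality $s_1-s_2=d(1/p_1-\tau_1-1/p_2+(p_1/p_2)\tau_1)$ is used, yielding $q_1\le q_2$. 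The last sentence of (ii) then follows by contradiction: if neither \eqref{ddh_4-5} nor \eqref{ddh_4-6} fails, then \eqref{520-1} must hold, for otherwise one can always pick $q_1>q_2$ violating what has just been established.

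The main obstacle I anticipate is the case \eqref{520-4}, both for sufficiency and for matching sharpness with the necessary conditions \eqref{ddh_4-6}: the interplay between the sub-cube $P$, the $|P|^{-\tau_i}$ weight, and the level sum must be handled carefully to extract exactly the factor $p_1/p_2$, and the necessity construction must replicate the same geometric concentration. This mismatch between the sufficient $q_1\le(p_1/p_2)q_2$ in \eqref{520-4} (and $q_1\le\min\{1,p_1/p_2\}q_2$ in \eqref{520-5}) and the necessary $q_1\le q_2$ is, I expect, precisely the small gap alluded to in Remark~\ref{lim-rem}, and closing it completely would require a more refined test sequence than the single-cube ansatz above.
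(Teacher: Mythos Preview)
Your overall architecture (reduce to sequence spaces via the extension operator and wavelets, then split along the three cases of \eqref{ddh_4-4}) matches the paper, and your treatment of \eqref{520-5} and of the necessity in case \eqref{ddh_4-5} is essentially what the paper does. Two points, however, need correction.

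First, the chain you write for \eqref{520-1} breaks in the last step: by \eqref{N-BT-equal} one has $\mathcal{N}^{s_2}_{u_2,p_2,\infty}(\Omega)=B^{s_2,\tau_2}_{p_2,\infty}(\Omega)$, and $B^{s_2,\tau_2}_{p_2,\infty}(\Omega)\hookrightarrow B^{s_2,\tau_2}_{p_2,q_2}(\Omega)$ fails for $q_2<\infty$, so \eqref{N-BT-emb} does not close the chain as stated. The paper avoids this by a different route: it applies the extension operator of Theorem~\ref{Th:ext} and then invokes Theorem~\ref{B-rn}(iii) on $\rn$ directly (Substep~1.1), handling the residual case $\tau_1=\tau_2$ by a direct sequence-space estimate (Substep~1.2). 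Your Besov--Morrey detour can be salvaged if you land in $\mathcal{N}^{s_2}_{u_2,p_2,q_2}(\Omega)$ rather than $\mathcal{N}^{s_2}_{u_2,p_2,\infty}(\Omega)$ in the middle step, but this requires checking that the limiting Besov--Morrey embedding from \cite{hs12b} indeed imposes no $q$-condition under $p_1/u_1>p_2/u_2$.

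Second, and more substantively, your necessity argument for \eqref{ddh_4-6} is too crude. A sequence concentrated on a single sub-cube $P_\ell$ with amplitudes $2^{-j(s_1+d/2-d/p_1)}$ does \emph{not} collapse the comparison to $\ell_{q_1}\hookrightarrow\ell_{q_2}$: on the source side the contribution of $P=P_\ell$ carries an extra factor $2^{(j-\ell)d/p_1}$ from the $\sim 2^{(j-\ell)d}$ wavelets at level $j$, and on the target side the exponent $s_2-s_1+d/p_1-d/p_2=d\tau_1(1-p_1/p_2)\neq 0$ prevents the cancellation you describe. The paper instead builds a \emph{sparse} counterexample adapted from \cite{hs12}: at each scale one places only $k_\nu=\lfloor 2^{d|\nu|p_1\tau_1}\rfloor$ nonzero coefficients inside $Q_{\nu,0}$, distributed so that every dyadic sub-cube $Q_{\mu,l}\subset Q_{\nu,0}$ contains at most $k_\mu$ of them. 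This calibration is exactly what saturates the $|P|^{-\tau_1}$ weight on the source side while producing the factor $2^{d(j-\mu)\tau_1 p_1/p_2}$ on the target side needed to recover the $\ell_{q_2}$ norm of the test sequence. Without this distributional control you cannot decouple the $q$-behaviour from the $p_1/p_2$ and $\tau_1/\tau_2$ interaction, and the single-cube ansatz will not force $q_1\le q_2$.
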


\begin{remark}\label{lim-rem}
	As mentioned above, we are left to consider the embedding in the limiting case \eqref{ddh_4-4} when \eqref{ddh_4-3} is satisfied. However, in case of $\tau_i=\frac{1}{p_i}$, $q_i=\infty$, for $i=1$ or $i=2$, the above Theorem~\ref{lim-lim} coincides with Propositions~\ref{lim-tau2-large} or \ref{lim-tau1-large}, respectively. So in fact situation \eqref{ddh_4-3} is the only interesting one now. 
	
	We have always $p_1<p_2$ in \eqref{520-4} so we have a small  gap between sufficient and necessary conditions on $q_i$ here. We meet a   similar situation if $s_1=s_2$, $p_1<p_2$,  $\tau_1=\frac{1}{p_1}$  and $q_1<\infty$ in \eqref{520-5}. In all other cases the result is sharp. 
\end{remark}

\begin{proof}
	\emph{Step 1.} {We start by proving part (i). For this, we use an argument similar to the one used in the proof of Proposition~\ref{taumonoton}, based on the extension operator from Theorem~\ref{Th:ext} and the wavelet decomposition of the spaces $\bt(\rn)$, cf. Proposition~\ref{wav-type2}.} We use the same notation as there.  Let us denote by $\wz{\sbt}(\wz{Q}_0)$ the sequence space defined by
	\begin{equation}
	  \wz{\sbt}(\wz{Q}_0):= \left\{t=\{t_{i,j,m}\}_{i,j,m}:\, t_{i,j,m} \in \cc,\, j \in \no, i=1, \dots, 2^d-1, \ m\in\zn, Q_{j,m} \subset \wz{Q}_0,\, \|t \mid \wz{\sbt}\| <\infty\right\},\nonumber
	\end{equation}
	where
	\begin{equation}\label{ls20.09_1}
	\|t \mid\wz{\sbt}\| := \sup_{P\in\mathcal{Q};\; P\subset \wz{Q}_0} \frac{ 1}{|P|^{\tau}}\left\{\sum_{j=\max\{j_P,0\}}^\fz
	2^{j(s+\frac d2-\frac{d}{p})q} \sum_{i=1}^{2^d-1}
	\left[\sum_{m:\ Q_{j,m}\subset P}
	|t_{i,j,m}|^{p}\right]^{\frac{ q}{p}}\right\}^{\frac{1}{q}}. 
	\end{equation}
	Then, we have to prove that for some $C>0$ 
	\begin{equation} \label{ls04_04}
	\| t \mid \wz{\sbtz}\| \leq C \, 	\| t \mid \wz{\sbte}\|
	\end{equation}
	holds true for all $t\in \wz{\sbte}$.	Here and in the sequel we assume for convenience that $p_i, q_i<\infty$, otherwise the modifications are obvious. 
	Please note, once more, that the assumption $P\subset \wz{Q}_{0}$  implies that  
	\begin{equation}\label{ls04_05}
	\frac{ 1}{|P|^{a}} \le  C \frac{ 1}{|P|^{b}}\quad \text{if }\quad a\le b   \qquad \text{and}\quad  \#\{m:\;Q_{j,m}\subset P \} \sim 2^{jd}\min\{1, 2^{-j_P d}\}\quad \text{if} \quad j\ge j_P. 
	\end{equation}
	Moreover, if $1\le |P|\le | \wz{Q}_0|$, then 
	\begin{equation}\label{ls04_06}
	\frac{ 1}{|P|^{\tau_2}} \sim \frac{ 1}{|P|^{\tau_1}}
	\end{equation}
	for any $\tau_1$ and $\tau_2$.  To shorten the notation we put $\gamma= \critical$.\\
	
  \emph{Substep 1.1.}  If  $\gamma=\frac{1}{p_1}-\tau_1-\frac{1}{p_2}+\tau_2>0$, then $\frac{p_1}{p_2}\tau_1\le \tau_2$. In this case   the statement follows from Theorem~\ref{Th:ext} and Theorem~\ref{B-rn}~(iii) if $\tau_1\not= \tau_2$.
	
	Indeed  Theorem~\ref{Th:ext} and Remark~\ref{remu} imply that there exists a common bounded  extension operator $\ext$ for the spaces $\bte(\Omega)$ and $\btz(\Omega)$ and we thus have the following commutative diagram
	\[
	\begin{CD}
	\bte(\Omega)@>{\rm id}>> \btz(\Omega)\\
	@V\ext VV  @AA{\rm re}A\\
	\bte(\rn)@>{\rm id}>> \btz(\rn).
	\end{CD} 
	\]
	Now the case \eqref{520-1} 
	follows from Theorem~\ref{B-rn}~(iii), as well as the situation when
	\begin{equation} \label{520-2-1}
	\frac{s_1-s_2}{d}=\frac{1}{p_1}-\tau_1-\frac{1}{p_2}+\tau_2>0,\quad \tau_1\not= \tau_2,\quad 
	\frac{p_1}{p_2}=\frac{\tau_2}{\tau_1}\quad\text{and}\quad 
	q_1\le\frac{p_1}{p_2}q_2. 
	\end{equation}

	{ \emph{Substep 1.2.} Let $\gamma=\frac{1}{p_1}-\tau_1-\frac{1}{p_2}+\tau_2>0$ and $\tau_1=\tau_2$. In that case $p_1<p_2$ and $s_1-\frac{d}{p_1}=s_2-\frac{d}{p_2}$.
	Let $t = \{t_{i,j,m}\}_{i,j,m}\in \wz{\sbte}$	 and let $\|t\mid\wz{\sbte}\|=1$. To simplify the notation we put $\lambda_{i,j,m}= 2^{-j(s_1+\frac{d}{2}-\frac{d}{p_1})}t_{i,j,m}$ and $\tau=\tau_1=\tau_2$.  
	
	For any $i$, $j$ and $m$ we have 
	\[ 
	|\lambda_{i,j,m}|\le 2^{-jd\tau} ,
	\]
and in consequence		
	\begin{equation}
	|\lambda_{i,j,m}|^{p_2}\le |\lambda_{i,j,m}|^{p_1} 2^{-jd\tau(p_2-p_1)}.
	\end{equation}
      In a parallel way, for any dyadic  cube $P\subset \wz{Q}_0$ we have  	
	\begin{equation}\label{ls20.09-2} 
	\sum_{Q_{j,m}\subset P} |\lambda_{i,j,m}|^{p_1} \le 2^{-j_Pd\tau p_1},
\end{equation}
so in consequence
\begin{align}
 \left( \sum_{Q_{j,m}\subset P} |\lambda_{i,j,m}|^{p_2}\right)^{\frac{q_2}{p_2}} \le
	\left( \sum_{Q_{j,m}\subset P} |\lambda_{i,j,m}|^{p_1}\right)^{\frac{q_2}{p_2}} 2^{-jd\tau(p_2-p_1)\frac{q_2}{p_2}} \le   \ c\ 
		 2^{-j_Pd\tau \frac{p_1}{p_2}q_2} 2^{-jd\tau(1-\frac{p_1}{p_2})q_2}
\end{align} 
for any $i=1,\dots, 2^d-1$. Summing up over $j$ we get
\begin{align}
\sum_{j=j_P}^\infty \sum_{i=1}^{2^d-1} \left( \sum_{Q_{j,m}\subset P} |\lambda_{i,j,m}|^{p_2}\right)^{\frac{q_2}{p_2}} 
\le   c\ 2^d 2^{-j_Pd\tau \frac{p_1}{p_2}q_2}    \sum_{j=j_P}^\infty 2^{-jd\tau(1-\frac{p_1}{p_2})q_2} = C  2^{-j_Pd\tau q_2}.
\end{align}
This proves that $t = \{t_{i,j,m}\}_{i,j,m}\in \wz{\sbtz}$ and $\|t \mid \wz{\sbtz}\|\le C$. 
}		
	
\emph{Substep 1.3.} Let  $\gamma=0\;$  {i.e., $s_1=s_2$}. Then $p_2\le p_1$ and $ \frac{1}{p_1}-\frac{1}{p_2} \le  \tau_1-\tau_2$ or $p_2 > p_1$ and $\tau_1=\frac{1}{p_1}$. First  we assume that $p_2\le p_1$ and $ \frac{1}{p_1}-\frac{1}{p_2} \le  \tau_1-\tau_2$. 	We conclude by H\"older's inequality for any $i=1, \dots, 2^d-1$, that
	\begin{align}
	\left[\sum_{m:\ Q_{j,m}\subset P} |t_{i,j,m}|^{p_2}\right]^{\frac{1}{p_2}} \le 
	2^{d(j-j_P)(\frac{1}{p_2}-\frac{1}{p_1})}
	\left[\sum_{m:\ Q_{j,m}\subset P} |t_{i,j,m}|^{p_1}\right]^{\frac{1}{p_1}} \, .
	\end{align}
	
	In consequence, for any $q\in (0,\infty]$,  
	\begin{align}\label{ls04_07}
	\left\{\sum_{j=\max\{j_P,0\}}^\fz
	2^{j(s_2+\frac d2-\frac{d}{p_2})q} \sum_{i=1}^{2^d-1} \left[\sum_{m:\ Q_{j,m}\subset P} |t_{i,j,m}|^{{p_2}}\right]^{\frac{q}{p_2}}
	\right\}^{\frac{1}{q}} \,\le  2^{j_P d(\frac{1}{p_1}-\frac{1}{p_2})}  
	\\ \times 
	\left\{\sum_{j=\max\{j_P,0\}}^\fz
	2^{j(s_1+\frac d2-\frac{d}{p_1})q}  2^{j(s_2-s_1)q}
	\sum_{i=1}^{2^d-1} \left[\sum_{m:\ Q_{j,m}\subset P} |t_{i,j,m}|^{p_1}\right]^{\frac{q}{p_1}} \right\}^{\frac{1}{q}}\, .\nonumber
	\end{align}
	If $\critical=0$, then $s_1=s_2$ and $a=\frac{1}{p_1}-\frac{1}{p_2}+\tau_2\le \tau_1=b$. So \eqref{ls04_04} follows from \eqref{ls04_07} and \eqref{ls04_05}-\eqref{ls04_06} for any $q_1\leq q_2$. 
	
	Now let $\gamma=0$, $p_2 > p_1$ and $\tau_1=\frac{1}{p_1}$.  First we consider the case $\tau_2=\frac{1}{p_2}$.  Let  
	\begin{equation} \label{assump1}
		\| t \mid {\wz{\sbte}}\|=1,
	\end{equation}
	which implies that, for every cube $P \in \mathcal{Q}, P \subset \wz{Q}_{0}$ and for every $j \geq \max\{j_P,0\}$, we have
	$$
	\frac{ 1}{|P|^{\tau_1 q_1}} \, 2^{j(s_1+\frac d2-\frac{d}{p_1})q_1} \sum_{i=1}^{2^d-1} \left[\sum_{m:\ Q_{j,m}\subset P} |t_{i,j,m}|^{p_1}\right]^{\frac{q_1}{p_1}} \leq 1
	$$
	In particular, we know that for every cube $Q_{\nu,m} \in \mathcal{Q}, Q_{\nu,m} \subset \wz{Q}_{0}$, with $\nu\geq 0$ and for every $i=1, ..., 2^d-1$, we have
	\begin{equation*}
		2^{\nu(s_1 + \frac{d}{2}- \frac{d}{p_1}+d \tau_1)}|t_{i,\nu,m}|=  2^{\nu(s_1 + \frac{d}{2})}|t_{i,\nu,m}| \leq 1.
	\end{equation*}
	So the condition $p_1< p_2$ implies
	\begin{equation}\label{assump2}
		2^{\nu( s_1 + \frac{d}{2})p_2}|t_{i,\nu,m}|^{p_2} \leq 2^{\nu( s_1 + \frac{d}{2})p_1}|t_{i,\nu,m}|^{p_1}.
	\end{equation}
	 We have to prove that $\| t \mid {\wz{\sbtz}}\|\lesssim 1$. 
	Let us fix a cube $P\in \mathcal{Q}, P \subset \wz{Q}_{0}$. Thus, by the inequality \eqref{assump2}, it follows that
	for $q_1=\frac{p_1}{p_2}q_2$ we have 
	\begin{align*}
	&\sum_{j=\max\{j_P,0\}}^\fz 2^{j(s_2+\frac d2-\frac{d}{p_2})q_2} \sum_{i=1}^{2^d-1} \left[\sum_{m:\ Q_{j,m}\subset P} |t_{i,j,m}|^{p_2}\right]^{\frac{ q_2}{p_2}}\\
	&= 
	\sum_{j=\max\{j_P,0\}}^\fz 2^{-j \frac{d}{p_2}q_2} \sum_{i=1}^{2^d-1} \left[\sum_{m:\ Q_{j,m}\subset P}2^{j(s_1+\frac d2)p_2} |t_{i,j,m}|^{p_2}\right]^{\frac{ q_2}{p_2}}\\
	& \leq 
	\sum_{j=\max\{j_P,0\}}^\fz 2^{-j\frac{d}{p_1} q_1} \sum_{i=1}^{2^d-1} \left[\sum_{m:\ Q_{j,m}\subset P}2^{j(s_1+\frac d2)p_1} |t_{i,j,m}|^{p_1}\right]^{\frac{ q_1}{p_1}}\\
	&=  
	\sum_{j=\max\{j_P,0\}}^\fz 2^{j(s_1+\frac d2-\frac{d}{p_1})q_1} \sum_{i=1}^{2^d-1} \left[\sum_{m:\ Q_{j,m}\subset P} |t_{i,j,m}|^{p_1}\right]^{\frac{ q_1}{p_1}}\\
	&\leq  	\| t \mid {\wz{\sbte}}\|^{q_1}\, |P|^{\tau_1 q_1}.   
	\end{align*}	
In consequence for any cube  $P\subset \wz{Q}_{0}$ we have 
\begin{align*}
\frac{ 1}{|P|^{\tau_2}} 
	& \left(\sum_{j=\max\{j_P,0\}}^\fz 2^{j(s_2+\frac d2-\frac{d}{p_2})q_2} \sum_{i=1}^{2^d-1} \left[\sum_{m:\ Q_{j,m}\subset P} |t_{i,j,m}|^{p_2}\right]^{\frac{ q_2}{p_2}}\right)^\frac{1}{q_2}\le  |P|^{\frac{\tau_1  q_1}{q_2}-\tau_2 } \le  1\\
\end{align*}
since  $\tau_1  q_1  -\tau_2q_2 = 0$ . 
 	So by monotonicity if $\tau_2=\frac{1}{p_2}$,  then for any $q_1$ and $q_2$ such that $q_1 \le  \frac{p_1}{p_2}q_2$ ,
\begin{equation*}
	\Big\| t \mid  {\wz{b^{s_2,\frac{1}{p_2}}_{p_2,q_2}}}\Big\| \leq  \| t \mid {\wz{\sbte}}\|.
	\end{equation*}	
If $\gamma=0$ and $\tau_1=\frac{1}{p_1}$, then $\tau_2\le \frac{1}{p_2}$. If $\tau_2<\frac{1}{p_2}$, then it follows from Substep 1.2 that 
	\[
	 \| t \mid {\wz{\sbtz}}\| \le C\, \Big\| t \mid {\wz{ b^{s_2,\frac{1}{p_2}}_{p_2,q_2}}}\Big\|,
	\]
so the final statement follows from the last two inequalities. 	

	
	{\em Substep 1.4.}~ Now let  $\gamma= \frac{1}{p_1}-\tau_1-\frac{1}{p_2}+\tau_1\frac{p_1}{p_2} > 0$ and $0\le \tau_1\le \frac{1}{p_1}$.  Please  note that this assumption implies $\tau_1\frac{p_1}{p_2} \ge  \tau_2$, $p_1\tau_1<1$  and $p_1<p_2$. 
	
The case  $\tau_1\frac{p_1}{p_2}=  \tau_2$ is covered by  \eqref{520-2-1} since $\tau_2=\tau_1\frac{p_1}{p_2} < \tau_1$. 
Let $\gamma>0$ and  $\tau_2 < \frac{p_1}{p_2}\tau_1$. We take $\tau_0$ such that  $\tau_0=\frac{p_1}{p_2}\tau_1$. The above considerations show that 
\[
\| t \mid {\wz{b^{s_2,\tau_0}_{p_2,q_2}}}\| \leq  C \, \| t \mid {\wz{\sbte}}\|
\]
if $q_1\le \frac{p_1}{p_2}q_2$. Since $\tau_2< \tau_0$ it follows from Substep 1.2 that 
\[
\| t \mid {\wz{\sbtz}}\| \le C \,\| t \mid {\wz{b^{s_2,\tau_0}_{p_2,q_2}}}\|.  
\]



    {\em Step 2.}~ Now we come to the necessity. We do some preparation first.\\
    
    By the diffeomorphic properties of Besov-type spaces, using translations and dilations if necessary we can assume that the domain $\Omega$ satisfies the following conditions: there exists some number $\nu_0\in\zz$ such that
    \begin{itemize}
    	\item $Q_{\nu_0,0} \subset \Omega$,
    	\item if $Q_{j,m} \subset Q_{\nu_0,0}, \quad j\geq 0, \quad$ then $\quad \supp \psi_{i,j,m} \subset \Omega$,
    	\item if $Q_{0,m} \subset Q_{\nu_0,0}, \quad$ then $\quad \supp \phi_{0,m} \subset \Omega$.
    \end{itemize}
    Due to the isomorphism resulting from the wavelet decomposition between function and sequence spaces, and similar to the explanation given in Substep~2.1 of the proof of \cite[Theorem~3.1]{hs12b}, one can equivalently prove the necessary conditions for the embedding
    $$
    \wz{\sbte}(Q_{\nu_0,0}) \hookrightarrow \wz{\sbtz}(Q_{\nu_0,0}),
    $$
    with $\nu_0<0$. For convenience, let us denote $\wz{Q}=Q_{\nu_0,0}$. \\
    
    {\em Substep 2.1.}~ We show that $q_1\leq q_2$ is necessary when $s_1=s_2$. We assume $q_1 >q_2$. Then we can choose a sequence of positive numbers $\{\gamma_j\}_{j\in\no} \in \ell_{q_1}(\no)\setminus\ell_{q_2}(\no)$. Let us define the sequence $t=\{t_{i,j,m}\}_{i,j,m}$, {$i=1,...,2^d-1, j\in \no, m \in \zn,$} by
    \begin{eqnarray}
    t_{i,j,m} := \begin{cases}
    2^{-j(s_1 + \frac d2)} \gamma_j \quad  &\mbox{if}\quad i=1 \quad \mbox{and} \quad Q_{j,m}\subset[0,1)^d,\\
    0 &\mbox{otherwise}.
    \end{cases}
    \end{eqnarray}
    Then, 
    \begin{align*}
    \|t \mid \wz{\sbte}\| &= \sup_{P\in\mathcal{Q};\; P\subset \wz{Q}}\frac{ 1}{|P|^{\tau_1}} 
    \left(\sum_{j=\max\{j_P,0\}}^\fz 2^{j(s_1+\frac d2-\frac{d}{p_1})q_1} \sum_{i=1}^{2^d-1} \left[\sum_{m:\ Q_{j,m}\subset P} |t_{i,j,m}|^{p_1}\right]^{\frac{ q_1}{p_1}}\right)^\frac{1}{q_1}\\
    &=  \sup_{P\in\mathcal{Q};\; P\subset \wz{Q}}\frac{ 1}{|P|^{\tau_1}} 
    \left(\sum_{j=\max\{j_P,0\}}^\fz 2^{j(s_1+\frac d2)q_1} \sum_{i=1}^{2^d-1} \left[ \int_P \left(\sum_{m \in \zn} |t_{i,j,m}|\chi_{j,m}(x)\right)^{p_1}\dint x \right]^{\frac{ q_1}{p_1}}\right)^\frac{1}{q_1}\\
    &= \sup_{P\in\mathcal{Q};\; P\subset \wz{Q}}\frac{ 1}{|P|^{\tau_1}} 
    \left(\sum_{j=\max\{j_P,0\}}^\fz 2^{j(s_1+\frac d2)q_1} 2^{-j(s_1 + \frac d2)q_1} |\gamma_j|^{q_1} |P\cap[0,1)^{d}|^{\frac{q_1}{p_1}}\right)^\frac{1}{q_1}\\
    &= \| \{\gamma_j\}_{{j\in \no}} \mid \ell_{q_1}\| < \infty,
    \end{align*}
   where the last equality holds because $\tau_1 \leq \frac{1}{p_1}$. On the other side, we obtain similarly that
   \begin{align*}
   \|t \mid \wz{\sbtz}\| &= \sup_{P\in\mathcal{Q};\; P\subset \wz{Q}}\frac{ 1}{|P|^{\tau_2}} 
   \left(\sum_{j=\max\{j_P,0\}}^\fz 2^{j(s_2+\frac d2-\frac{d}{p_2})q_2} \sum_{i=1}^{2^d-1} \left[\sum_{m:\ Q_{j,m}\subset P} |t_{i,j,m}|^{p_2}\right]^{\frac{ q_2}{p_2}}\right)^\frac{1}{q_2}\\  
   &=\| \{\gamma_j\}_{{j\in \no}} \mid \ell_{q_2}\|  = \infty, 
  \end{align*}
   which contradicts the embedding. \\
   
   {\em Substep~2.2.} Now we show that the condition $q_1\leq q_2$ is also necessary when $\frac{s_1-s_2}{d}= \frac{1}{p_1}-\tau_1 - \frac{1}{p_2}+ \frac{p_1}{p_2}\tau_1>0$. Let us assume $q_1>q_2$. We adapt the counter-example used in Substep~2.4 of the proof \cite[Theorem~3.2]{hs12}. For any $0>\nu\geq \nu_0$, we put 
   $$
   k_\nu := \lfloor2^{d|\nu|p_1 \tau_1}\rfloor,
   $$
   where $\lfloor x \rfloor=\max\{l \in \zz: l\leq x\}$. Then $1\leq k_\nu <2^{d|\nu|}$ and
   \begin{equation} \label{k-est}
   		k_\nu \leq c_{p_1, \tau_1} \, 2^{d(\mu - \nu)}\,k_{\mu}, \quad \mbox{if}\quad \nu\leq \mu<0. 
   \end{equation}
   For convenience, we assume $c_{p_1, \tau_1}=1$ (otherwise the proper modifications have to be done). As there, we define a sequence $t^{(\nu)}= \{t_{i,j,m}^{(\nu)}\}_{i,j,m}$, {$i=1,...,2^d-1, j\in \no, m \in \zn,$} in the following way: we assume that $k_\nu$ elements of the sequence equal $1$ and the rest equals $0$. If $j\neq 0$, $i\neq 1$ or $Q_{0,m} \nsubseteq Q_{\nu,0}$, then $t_{i,j,m}^{(\nu)}=0$. Because of \eqref{k-est}, we can choose the elements that equal 1 in such a way that the following property holds:
   \begin{equation*}
   \mbox{if} \quad Q_{\mu, l}\subseteq Q_{\nu,0} \quad \mbox{and} \quad Q_{\mu,l}= \bigcup_{i=1}^{2^{-d \mu}} Q_{0,m_i}, \quad \mbox{then at most } k_\mu \mbox{ elements } t_{1,0,m_i}^{(\nu)}  \mbox{ equal } 1.
   \end{equation*}
   Now we define a new sequence $t=\{t_{i,j,m}\}_{i,j,m} \in \wz{\sbte}$ by
   \begin{equation*}
   t_{i,j,m}= \gamma_j \, t_{i,0,m}^{(\nu)}, \quad \mbox{if}\quad  j=\nu-\nu_0 \quad \mbox{and} \quad Q_{0,m}\subset Q_{\nu,0}, 
   \end{equation*}
	where $\{\gamma_j\}_{j \in \no}$ is a sequence of positive numbers with $\{2^{j(s_1+\frac{d}{2}-\frac{d}{p_1}+\tau_1)}\, \gamma_j\}_{j \in \no} \in \ell_{q_1}(\no)\setminus\ell_{q_2}(\no)$. If $Q_{\mu,l}\subset Q_{\nu_0, 0}$, then for fixed $j \geq \mu$, there are at most $k_{\mu-l}$ non-zero elements $t_{i,j,m}$ such that $Q_{j,m} \subset Q_{\mu,l}$. Thus
	\begin{equation*}
		\sum_{m: Q_{j,m} \subset Q_{\mu,l}} |t_{i,j,m}|^{p_1} \leq \gamma_j^{p_1}\, 2^{d(j-\mu)\tau_1 p_1}
	\end{equation*}
	and the last sum is $k_{\mu-j}\gamma_j^{p_1}$ if $\mu=\nu_0$. Therefore, 
	\begin{align}\label{est1}
	\|t \mid \wz{\sbte}\| &= \sup_{P\in\mathcal{Q};\; P\subset \wz{Q}} \frac{ 1}{|P|^{\tau_1}} 
	\left(\sum_{j=\max\{j_P,0\}}^\fz 2^{j(s_1+\frac d2-\frac{d}{p_1})q_1} \sum_{i=1}^{2^d-1} \left[\sum_{m:\ Q_{j,m}\subset P} |t_{i,j,m}|^{p_1}\right]^{\frac{ q_1}{p_1}}\right)^\frac{1}{q_1}\nonumber\\
	& \leq \sup_{P\in\mathcal{Q};\; P\subset \wz{Q}} \frac{ 1}{|P|^{\tau_1}} 
	\left(\sum_{j=\max\{j_P,0\}}^\fz 2^{j(s_1+\frac d2-\frac{d}{p_1})q_1}\, \gamma_j^{q_1}\, 2^{d(j-j_P)\tau_1q_1} \right)^\frac{1}{q_1}\nonumber\\
	&= \sup_{P\in\mathcal{Q};\; P\subset \wz{Q}} 	\left(\sum_{j=\max\{j_P,0\}}^\fz 2^{j(s_1+\frac d2-\frac{d}{p_1}+d\tau_1)q_1} \,\gamma_j^{q_1} \right)^\frac{1}{q_1}\nonumber\\
	&= \|\{2^{j(s_1+\frac d2-\frac{d}{p_1}+d\tau_1)}\, \gamma_j\}_{{j\in\no}} \mid \ell_{q_1}\| <\infty.
	\end{align}
	Similarly we obtain
	\begin{equation*}
	\left(\sum_{m:\ Q_{j,m}\subset Q_{\mu,l}} |t_{i,j,m}|^{p_2} \right)^{\frac{q_2}{p_2}} \leq\left(\gamma_j^{p_2}\, k_{\mu-j}\right)^{\frac{q_2}{p_2}}\leq \gamma_j^{q_2}\, 2^{d(j-\mu)\tau_1 p_1\frac{q_2}{p_2}},
	\end{equation*}
	and when $\mu=\nu_0$
	\begin{equation*}
	\left(\sum_{m:\ Q_{j,m}\subset Q_{\nu_0,0}} |t_{i,j,m}|^{p_2} \right)^{\frac{q_2}{p_2}} =\left(\gamma_j^{p_2}\, k_{\nu_0-j}\right)^{\frac{q_2}{p_2}}\geq C\, \gamma_j^{q_2} \,2^{d(j-\nu_0)\tau_1 p_1\frac{q_2}{p_2}},
	\end{equation*}
	for some constant $C$ independent of $\gamma$. Then, 
	\begin{equation*}
	C\, \gamma_j^{q_2}\, 2^{\nu_0 d \frac{p_1 \tau_1}{p_2}q_2} \leq 	\left(\sum_{m:\ Q_{j,m}\subset Q_{\nu_0,0}} |t_{i,j,m}|^{p_2} \right)^{\frac{q_2}{p_2}} 2^{-j d\frac{p_1 \tau_1}{p_2}q_2}, 
	\end{equation*}
	which yields
	\begin{align*}
		\| \{2^{j(s_1+\frac{d}{2}-\frac{d}{p_1}+d\tau_1)}\, \gamma_j\}_{j \in \no} \mid \ell_{q_2}\| &\leq C \left\{ \sum_{j=0}^\infty 2^{j(s_2+\frac{d}{2}-\frac{d}{p_2})q_2} \, 2^{\nu_0 d\frac{p_1 \tau_1}{p_2}q_2} \sum_{i=1}^{2^d-1} \left(\sum_{m:\ Q_{j,m}\subset Q_{\nu_0,0}} |t_{i,j,m}|^{p_2} \right)^{\frac{q_2}{p_2}} \right\}^\frac{1}{q_2}\\
		& \sim 2^{\nu_0 d\frac{p_1 \tau_1}{p_2}} \left\{ \sum_{j=\max\{0,j_P\}}^\infty 2^{j(s_2+\frac{d}{2}-\frac{d}{p_2})q_2}  \sum_{i=1}^{2^d-1} \left(\sum_{m:\ Q_{j,m}\subset Q_{\nu_0,0}} |t_{i,j,m}|^{p_2} \right)^{\frac{q_2}{p_2}} \right\}^\frac{1}{q_2}\\
		&\leq  \|t \mid \wz{\sbtz}\| \\
		& \lesssim\|t \mid \wz{\sbte}\|\\
		&<\infty, 
	\end{align*}
	using \eqref{est1} in the last step and \cite[Lemma~3.3]{ysy} in the second, since $\tau_2<\frac{1}{p_2}$ here. This contradicts our assumption on the sequence $\{2^{j(s_1+\frac{d}{2}-\frac{d}{p_1}+\tau_1)}\, \gamma_j\}_{j \in \no}$, and completes the proof in this case.
\end{proof}

  Before we turn our interest to Triebel-Lizorkin-type spaces, we shall discuss some special case and compare it with the classical result as recalled in Remark~\ref{R-lim-B}. We concentrate on the limiting case \eqref{ddh_4-4} under the assumptions \eqref{ddh_4-3} again. Let us assume now $\tau_1=\tau_2=:\tau$, i.e., $0<\tau\leq \min\{\frac{1}{p_1},\frac{1}{p_2}\}$ with $q_i<\infty$ if $\tau=\frac{1}{p_i}$, $i=1,2$. In that case we find that \eqref{ddh_4-4} reads as $s_1-s_2 = d \max\{0, \frac{1}{p_1}-\frac{1}{p_2}\}$. Then Theorem~\ref{lim-lim} implies the following.

  \begin{corollary}\label{same_tau-B}
    	Let $0< p_i\leq\infty$, $s_i\in \real$, $0<q_i\leq\infty$,  $0< 
	\tau\leq \min\{\frac{1}{p_1}, \frac{1}{p_2}\}$, {with} $q_i<\infty$ {if} $\tau=\frac{1}{p_i}=\min\{\frac{1}{p_1},\frac{1}{p_2}\}$, $i=1,2$. 
	Assume that 
	\[
	\frac{s_1-s_2}{d}  = \gamma(\tau,\tau,p_1,p_2) =\max\left\{0, \frac{1}{p_1}-\frac{1}{p_2}\right\}.
	\]
Then the embedding 
	\begin{equation}
	\id_{\tau}: B^{s_1,\tau}_{p_1,q_1}(\Omega )\hookrightarrow B^{s_2,\tau}_{p_2,q_2}(\Omega )
	\end{equation}
	is continuous if, and only if, either $p_1<p_2$, or $p_1\geq p_2$ with $ q_1\le q_2$.
    \end{corollary}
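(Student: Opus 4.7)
The plan is to deduce the corollary directly by specializing Theorem~\ref{lim-lim} to $\tau_1 = \tau_2 = \tau$. The assumption $0 < \tau \leq \min\{1/p_1, 1/p_2\}$ together with the convention on $q_i$ at the critical line ensures condition~\eqref{ddh_4-3} holds, so Theorem~\ref{lim-lim} indeed applies. I would split the argument into the two cases $p_1 < p_2$ and $p_1 \geq p_2$ according to the sign of $\frac{1}{p_1}-\frac{1}{p_2}$.

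\textbf{Case 1: $p_1 < p_2$.} First I would compute $\critical$ in \eqref{gamma}. Since $p_1/p_2 < 1$ and $\tau > 0$, we have $\max\{\tau, \frac{p_1}{p_2}\tau\} = \tau$, so
\[
\critical = \max\Bigl\{0,\, \tfrac{1}{p_1} - \tfrac{1}{p_2}\Bigr\} = \tfrac{1}{p_1} - \tfrac{1}{p_2} > 0,
\]
matching the given smoothness gap $(s_1-s_2)/d$. Next I would check that condition~\eqref{520-1} of Theorem~\ref{lim-lim}(i) is satisfied: the strict inequality $p_1\tau < p_2\tau$ is immediate from $p_1 < p_2$. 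Thus $\id_\tau$ is continuous regardless of $q_1, q_2$. For the necessity side I would verify that neither \eqref{ddh_4-5} nor \eqref{ddh_4-6} of Theorem~\ref{lim-lim}(ii) is triggered here: \eqref{ddh_4-5} fails since $\critical > 0$, and the quantity appearing in \eqref{ddh_4-6} equals
\[
\tfrac{1}{p_1} - \tau - \tfrac{1}{p_2} + \tfrac{p_1}{p_2}\tau = \tfrac{1}{p_1} - \tfrac{1}{p_2} - \tau\,\tfrac{p_2-p_1}{p_2} < \tfrac{1}{p_1} - \tfrac{1}{p_2} = \critical,
\]
so~\eqref{ddh_4-6} does not hold either. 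Hence no constraint on $q_1, q_2$ is forced.

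\textbf{Case 2: $p_1 \geq p_2$.} Here $(s_1-s_2)/d = 0$, so $s_1 = s_2$ and $\critical = 0$. For sufficiency when $q_1 \leq q_2$, I would invoke condition~\eqref{520-5} of Theorem~\ref{lim-lim}(i): since $p_1/p_2 \geq 1$, one has $\min\{1, p_1/p_2\} = 1$, and the condition reduces to precisely $q_1 \leq q_2$. For necessity, I would apply Theorem~\ref{lim-lim}(ii) with assumption~\eqref{ddh_4-5}, which yields $q_1 \leq q_2$ as a necessary condition whenever the embedding is continuous.

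Combining both cases yields the stated characterization. There is no substantial obstacle here: once the parameters are substituted into Theorem~\ref{lim-lim} and \eqref{gamma}, the argument is a bookkeeping check, with the only subtle point being the verification that in Case~1 the necessity clauses of Theorem~\ref{lim-lim}(ii) impose no restriction (i.e., neither \eqref{ddh_4-5} nor \eqref{ddh_4-6} applies), so that indeed arbitrary $q_1, q_2$ are allowed.
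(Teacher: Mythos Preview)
Your proposal is correct and follows essentially the same route as the paper's proof: sufficiency via \eqref{520-1} when $p_1<p_2$ and via \eqref{520-5} when $p_1\ge p_2$, necessity via \eqref{ddh_4-5} when $p_1\ge p_2$, and the observation that \eqref{ddh_4-6} cannot occur here. The only cosmetic difference is that for $p_1<p_2$ the paper simply invokes the last sentence of Theorem~\ref{lim-lim}(ii) to confirm that no $q$-restriction arises, whereas you explicitly compute that $\frac{1}{p_1}-\tau-\frac{1}{p_2}+\frac{p_1}{p_2}\tau<\critical$ to rule out \eqref{ddh_4-6}; both arguments amount to the same verification.
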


  \begin{proof}
 The sufficiency follows from \eqref{520-1} in case of $p_1<p_2$, and from \eqref{520-5} for $p_1\geq p_2$. Note that the case \eqref{520-4} is not applicable in this situation. The necessity is implied by \eqref{ddh_4-5} in case of $p_1\geq p_2$, and the last statement in (ii) if $p_1<p_2$. Again, \eqref{ddh_4-6} is not possible in this context.   
  \end{proof}  

\begin{remark}  \label{R-same_tau-B}
Let us explicitly comment on the difference between the above result for $\tau>0$ and the classical one for $\tau=0$ as recalled in Remark~\ref{R-lim-B}. Only in case of embeddings of spaces with the same smoothness $s_1=s_2$ (and thus $p_1\geq p_2$) we have an influence of the fine parameters $q_i$, that is, $q_1\leq q_2$. This is parallel to the classical case $\tau=0$ and could thus be expected. However, what is far more surprising, is the outcome for $s_1>s_2$ and $p_1<p_2$: in contrast to the classical setting for $\tau=0$ we do not have any $q$-dependence here as long as $\tau>0$ (and small enough, such that we are still in the new Morrey-type situation, unlike in Propositions~\ref{lim-tau2-large} and \ref{lim-tau1-large}). Again this explains the special r\^ole of the hybrid parameter $\tau$ which influences both smoothness and integrability. 
\end{remark}

	\ignore{{\em Substep~2.3.} We still have to deal with the case $\frac{s_1-s_2}{d}= \frac{1}{p_1}- \tau_1 + \frac{1}{p_2}-\tau_2>0$.
\red{{\em Substep~2.3.} Let $\frac{s_1-s_2}{d}= \frac{1}{p_1}- \tau_1  - \frac{1}{p_2} + \tau_2>0$ and $\tau_1=\tau_2$.	Let $\lambda=(\lambda_j)_{j\in \nn}\in \ell_{q_1}$.  We put 
\[ t_{i,j,m}=
\begin{cases}
\lambda_j2^{-j(s_1+\frac{d}{2}- \frac{d}{p_1})} & \text{if}\qquad i=1 \quad\text{and}\quad m=0,\\
0 & \text{otherwise}. 
\end{cases}
\]
	Then 
	\[ 
	\|\lambda|\ell_{q_2}\| = \| t | b^{s_2,\tau_2}_{p_2,q_2}\| \le C \|t | b^{s_1,\tau_1}_{p_1,q_1}\| =  \|\lambda|\ell_{q_1}\|
	\]
since $ s_1- \frac{d}{p_1} =  s_2- \frac{d}{p_2}>0$. This implies $q_1\le q_2$. 

The last statement follows now immediately since in the  other cases we have always $q_1\le q_2$.}}

\begin{remark}
	Note that Proposition~\ref{taumonoton} can be obtained also as an immediate consequence of Theorem~\ref{lim-lim} and the Propositions~\ref{lim-tau1-large} and \ref{lim-tau2-large}. 
\end{remark}

\bigskip~
We collect now the counterpart of Theorem~\ref{lim-lim} for the Triebel-Lizorkin-type spaces. When $\tau_i<\frac{1}{p_i}$, $i=1,2$, the result follows immediately from \cite[Theorem~5.2]{hs14} and the coincidence of $\ft$ and $\MF$ spaces if $\tau=\frac{1}{p}-\frac{1}{u}$, and it reads as follows.

\begin{corollary}\label{lim-lim-c}
Let $0< p_1,p_2<\infty$, $s_i\in \real$, $0<q_i\leq\infty$,  $0\leq 
\tau_i< \frac{1}{p_i}$, 
$i=1,2$. 
Assume that 
\[
\frac{s_1-s_2}{d}  = \critical =\max\left\{0, \frac{1}{p_1}-\tau_1-\frac{1}{p_2}+\max\left\{\tau_2,\frac{p_1}{p_2}\tau_1\right\}\right\}.
\]
\begin{itemize} 
	\item[{\bfseries\upshape (i)}] The embedding 
	\begin{equation}\label{3.07.20}
	\id_{\tau}: \fte(\Omega )\hookrightarrow \ftz(\Omega )
	\end{equation}
	is continuous if one of the following conditions holds:
	\begin{align}
	& \frac{1}{p_1}-\frac{1}{p_2}> \tau_1-\tau_2
	\label{520-1f}\\
	\text{or}\qquad &\frac{1}{p_1}-\frac{1}{p_2}\le \tau_1-\tau_2\quad 
	\text{and}\quad  q_1\le \min\bigg\{1,\frac{p_1}{p_2}\bigg\}q_2\label{520-2f}.
	\end{align}
	\item[{\bfseries\upshape (ii)}] 
	If there is a continuous embedding $\id_{\tau}$ in \eqref{3.07.20}, then the parameters satisfy the condition \eqref{520-1f}, or \eqref{520-2f} holds with $q_1\leq q_2$. 	
\end{itemize}	
\end{corollary}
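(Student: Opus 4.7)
The plan is to deduce Corollary~\ref{lim-lim-c} directly from the corresponding statement for Triebel--Lizorkin--Morrey spaces proved in \cite[Theorem~5.2]{hs14} via the coincidence \eqref{fte}. Under the hypothesis $0\le \tau_i<\frac{1}{p_i}$, set
\[
\frac{1}{u_i}:=\frac{1}{p_i}-\tau_i, \qquad i=1,2,
\]
so that $0<p_i\le u_i<\infty$. By \eqref{fte} we have $\ftz(\rn)=\MFz(\rn)$ with equivalent quasi-norms, and since both scales on $\Omega$ are defined by restriction (cf. Definition~\ref{tau-spaces-Omega}), the coincidence transfers to the domain: $\ftz(\Omega)=\MFz(\Omega)$, for $i=1,2$. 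Consequently, the embedding \eqref{3.07.20} is continuous if and only if $\MFe(\Omega)\hookrightarrow\MFz(\Omega)$ is continuous.

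Next I would translate the parameter conditions from the $(\tau_i,p_i)$-picture to the $(u_i,p_i)$-picture. A direct computation gives
\[
\frac{1}{p_1}-\tau_1-\frac{1}{p_2}+\tau_2 = \frac{1}{u_1}-\frac{1}{u_2}, \qquad
\frac{1}{p_1}-\tau_1-\frac{1}{p_2}+\frac{p_1}{p_2}\tau_1 = \frac{p_2-p_1}{p_2\,u_1},
\]
so that the limiting condition $\frac{s_1-s_2}{d}=\critical$ becomes
\[
\frac{s_1-s_2}{d}=\max\left\{0,\;\frac{1}{u_1}-\frac{1}{u_2},\;\frac{p_2-p_1}{p_2\,u_1}\right\},
\]
which is precisely the limiting relation in the $\MF$-scale treated in \cite[Theorem~5.2]{hs14}. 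Moreover, \eqref{520-1f} rewrites as $u_1<u_2$, and \eqref{520-2f} rewrites as $u_1\ge u_2$ together with $q_1\le \min\{1,\frac{p_1}{p_2}\}q_2$.

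Finally, I would invoke \cite[Theorem~5.2]{hs14}, which asserts that in this limiting situation the embedding $\MFe(\Omega)\hookrightarrow\MFz(\Omega)$ is continuous precisely under one of these two dichotomous conditions on $(u_i,p_i,q_i)$; the necessity yields the partial converse claimed in part~(ii). Pulling this back through the identification $\ft(\Omega)=\MF(\Omega)$ gives both the sufficient conditions in (i) and the necessary conditions in (ii).

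The only real obstacle is the bookkeeping in the three equivalent descriptions of the limiting hyperplane, in particular verifying that the case distinction encoded by $\max\{\tau_2,\frac{p_1}{p_2}\tau_1\}$ in $\critical$ matches the case distinction $u_1\lessgtr u_2$ used in \cite[Theorem~5.2]{hs14}; beyond this translation no additional argument is needed since the full range $0<p,q\le\infty$ in the $F$-case is already covered by the extension operator of Theorem~\ref{Th:ext} combined with Corollary~3.3 of \cite{ysy} that underlies \eqref{fte}.
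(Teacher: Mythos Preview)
Your proposal is correct and follows exactly the approach taken in the paper: the authors state just before the corollary that ``the result follows immediately from \cite[Theorem~5.2]{hs14} and the coincidence of $\ft$ and $\MF$ spaces if $\tau=\frac{1}{p}-\frac{1}{u}$'', which is precisely your argument via \eqref{fte} and the parameter translation $\frac{1}{u_i}=\frac{1}{p_i}-\tau_i$. The only superfluous remark is the final sentence about the extension operator of Theorem~\ref{Th:ext}; it plays no role here since the coincidence \eqref{fte} already transfers to domains by restriction, and \cite[Theorem~5.2]{hs14} is stated directly on $\Omega$.
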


We return to the situation $\tau_1=\tau_2>0$ studied in Corollary~\ref{same_tau-B}, but now in case of $F$-spaces.

\begin{corollary}\label{same_tau-F}
    	Let $0< p_i<\infty$, $s_i\in \real$, $0<q_i\leq\infty$, $i=1,2$, and $0\leq 	\tau< \min\{\frac{1}{p_1}, \frac{1}{p_2}\}$. 
	Assume that 
	\[
	\frac{s_1-s_2}{d}  = \gamma(\tau,\tau,p_1,p_2) =\max\left\{0, \frac{1}{p_1}-\frac{1}{p_2}\right\}.
	\]
Then the embedding 
	\begin{equation}
	\id_{\tau}: F^{s_1,\tau}_{p_1,q_1}(\Omega )\hookrightarrow F^{s_2,\tau}_{p_2,q_2}(\Omega )
	\end{equation}
	is continuous if, and only if, either $p_1<p_2$, or $p_1\geq p_2$ with $ q_1\le q_2$.
    \end{corollary}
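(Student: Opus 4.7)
The plan is to derive this statement as a direct specialization of Corollary~\ref{lim-lim-c} to the case $\tau_1=\tau_2=\tau$. Since $0\leq\tau<\min\{1/p_1,1/p_2\}$, the hypothesis $\tau_i<1/p_i$ required in Corollary~\ref{lim-lim-c} is satisfied, and the limiting identity $\frac{s_1-s_2}{d}=\gamma(\tau,\tau,p_1,p_2)=\max\{0,1/p_1-1/p_2\}$ coincides with the limiting identity assumed in that corollary (observe that, when $\tau_1=\tau_2$, the third entry in the outer maximum defining $\critical$ collapses to the first).

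For sufficiency, I would simply translate the two alternative conditions of Corollary~\ref{lim-lim-c}(i): condition \eqref{520-1f} reads $\frac{1}{p_1}-\frac{1}{p_2}>\tau_1-\tau_2=0$, i.e. $p_1<p_2$, in which case no restriction on $q_1,q_2$ is needed; condition \eqref{520-2f} reads $\frac{1}{p_1}-\frac{1}{p_2}\leq 0$, i.e. $p_1\geq p_2$, together with $q_1\leq\min\{1,p_1/p_2\}q_2$. Since $p_1\geq p_2$ implies $p_1/p_2\geq 1$, we have $\min\{1,p_1/p_2\}=1$ and the condition becomes $q_1\leq q_2$. This gives exactly the two alternatives in the statement.

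For necessity, I would invoke Corollary~\ref{lim-lim-c}(ii), which asserts that continuity of $\id_\tau$ forces either \eqref{520-1f} to hold, or \eqref{520-2f} together with $q_1\leq q_2$. Applied with $\tau_1=\tau_2=\tau$, the same simplifications give: either $p_1<p_2$, or $p_1\geq p_2$ with $q_1\leq q_2$, as required.

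I do not expect any genuine obstacle here, since the content of the corollary is entirely absorbed in Corollary~\ref{lim-lim-c} and no new counterexample or extension argument is needed. The only small point to verify carefully is that, when $\tau_1=\tau_2=\tau$, the expression $\critical$ in \eqref{gamma} indeed reduces to $\max\{0,1/p_1-1/p_2\}$, so that the limiting regime assumed here is precisely the one covered by Corollary~\ref{lim-lim-c}.
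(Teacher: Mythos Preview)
Your proposal is correct and follows essentially the same approach as the paper: the paper's proof also reduces the statement to Corollary~\ref{lim-lim-c} (together with the classical case $\tau=0$, which, as you implicitly use, is already covered by Corollary~\ref{lim-lim-c} since $0\le\tau_i<1/p_i$ is allowed there). Your explicit unpacking of conditions \eqref{520-1f} and \eqref{520-2f} under $\tau_1=\tau_2$ is exactly the argument needed, and your verification that $\critical$ reduces to $\max\{0,1/p_1-1/p_2\}$ is the only nontrivial bookkeeping.
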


\begin{proof}
This is an immediate consequence of Corollary~\ref{lim-lim-c} when $\tau>0$ and of the classical situation when $\tau=0$, cf.  Remark~\ref{R-same_tau-B}.
\end{proof}

\begin{remark}
  In contrast to Remark~\ref{R-same_tau-B} concerning Besov-type spaces, we thus obtain the natural counterpart of the well-known classical situation ($\tau=0$) to the situation $\tau>0$, recall Remark~\ref{R-lim-B}.
\end{remark}

We study now some more possible situations regarding Triebel-Lizorkin-type spaces. 
\begin{corollary}\label{lim-lim-c1}
Let $0< p_1,p_2<\infty$, $s_i\in \real$, $0<q_i\leq\infty$,  $0\leq 
	\tau_i\leq \frac{1}{p_i}$, {with} $q_i<\infty$ {if} $\tau_i=\frac{1}{p_i}$
	$i=1,2$. 
	Assume that 
	\[
	\frac{s_1-s_2}{d}  = \critical =\max\left\{0, \frac{1}{p_1}-\tau_1-\frac{1}{p_2}+\max\left\{\tau_2,\frac{p_1}{p_2}\tau_1\right\}\right\}.
	\]
	\begin{itemize} 
	\item[{\bfseries\upshape (i)}] The embedding \eqref{3.07.20} is continuous if one of the following conditions holds:
	\begin{align}
	&\tau_1=\frac{1}{p_1},\quad {\tau_2  \leq \frac{1}{p_2}}\quad 
	\text{and}\quad  q_1\le q_2,\label{520-3f}\\
	\text{or}\qquad &\tau_1<\frac{1}{p_1}\quad \text{and}\quad\tau_2=\frac{1}{p_2}. 
	\label{520-4f}
	\end{align}
	\item[{\bfseries\upshape (ii)}] 
	 If the embedding \eqref{3.07.20} is continuous and $\tau_1=\frac{1}{p_1}$ and $\tau_2=\frac{1}{p_2}$, then $q_1\leq q_2$. Moreover, if $\tau_1=\frac{1}{p_1}$ and $\tau_2<\frac{1}{p_2}$, then the continuity of the embedding \eqref{3.07.20} implies $q_1 \leq \max\{p_2,q_2\}$.
\end{itemize}
\end{corollary}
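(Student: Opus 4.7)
My plan is to reduce every claim to the Besov-type embedding result Theorem~\ref{lim-lim}, exploiting the coincidence \eqref{ftbt} on $\Omega$, namely $F^{s,1/p}_{p,q}(\Omega) = F^{s}_{\infty,q}(\Omega) = B^{s,1/q}_{q,q}(\Omega)$, combined with the sandwich \eqref{elem-tau} and the $\tau$-monotonicity of Proposition~\ref{taumonoton}. These three tools convert every $F$-type space with $\tau=\frac{1}{p}$ into a Besov-type space with $\tau=\frac{1}{q}$, reducing the $F$-type embedding to a $B$-type one whose continuity has already been characterised.

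For the sufficiency of \eqref{520-3f}, one computes from \eqref{gamma} that $\critical=0$, so $s_1=s_2$. I would then write the chain
\[
\fte(\Omega) = F^{s_1}_{\infty,q_1}(\Omega) \hookrightarrow F^{s_1}_{\infty,q_2}(\Omega) = F^{s_1,1/p_2}_{p_2,q_2}(\Omega) \hookrightarrow F^{s_1,\tau_2}_{p_2,q_2}(\Omega) = \ftz(\Omega),
\]
where the three equalities come from \eqref{ftbt}, the first embedding from \eqref{elem-1-t} (which yields $F^s_{\infty,q_1}\hookrightarrow F^s_{\infty,q_2}$ under $q_1\le q_2$), and the second embedding from Proposition~\ref{taumonoton} (applicable since $\tau_2\le\frac{1}{p_2}$).

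For the sufficiency of \eqref{520-4f}, since $\tau_2=\frac{1}{p_2}$ the target can be rewritten via \eqref{ftbt} as $\ftz(\Omega)=B^{s_2,1/q_2}_{q_2,q_2}(\Omega)$, while on the source side \eqref{elem-tau} gives $\fte(\Omega) \hookrightarrow B^{s_1,\tau_1}_{p_1,\max\{p_1,q_1\}}(\Omega)$. It then suffices to verify the resulting Besov-type embedding via Theorem~\ref{lim-lim}(i), condition \eqref{520-1}: the critical exponent of that $B$-embedding reduces to $\frac{1}{p_1}-\tau_1>0$ (because $\tau_1<\frac{1}{p_1}$), while the side condition $p_1\tau_1<p_2\tau_2$ becomes $p_1\tau_1<1$, which holds automatically. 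Hence the chain closes.

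For the necessary conditions in (ii), I would once more use \eqref{ftbt} to identify the source as $\fte(\Omega)=B^{s_1,1/q_1}_{q_1,q_1}(\Omega)$. When $\tau_2=\frac{1}{p_2}$, \eqref{ftbt} rewrites the target as $B^{s_2,1/q_2}_{q_2,q_2}(\Omega)$; when $\tau_2<\frac{1}{p_2}$, \eqref{elem-tau} yields $\ftz(\Omega)\hookrightarrow B^{s_2,\tau_2}_{p_2,\max\{p_2,q_2\}}(\Omega)$. In both cases the assumed continuity of $\id_\tau$ forces a Besov-type embedding with $s_1=s_2$ and (by direct inspection of \eqref{gamma}) $\critical=0$, so Theorem~\ref{lim-lim}(ii), clause \eqref{ddh_4-5}, delivers $q_1\le q_2$ in the first situation and $q_1\le\max\{p_2,q_2\}$ in the second. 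The main obstacle is really only bookkeeping: one has to check that after each reduction the transformed parameters remain inside the admissible range of Theorem~\ref{lim-lim} (in particular, the modified $\critical$ is still zero or of the expected positive form, and the boundary constraint $q_i<\infty$ whenever $\tau_i=\frac{1}{p_i}$ from \eqref{ddh_4-3} is preserved); no new analytic ingredient is required beyond \eqref{ftbt}, \eqref{elem-tau} and Proposition~\ref{taumonoton}.
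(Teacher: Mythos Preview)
Your proposal is correct and follows essentially the same route as the paper: both reduce the $F$-type endpoint cases to Theorem~\ref{lim-lim} by means of the identification \eqref{ftbt}, the sandwich \eqref{elem-tau}, and the $\tau$-monotonicity of Proposition~\ref{taumonoton}. The only cosmetic differences are that for \eqref{520-3f} the paper writes the middle step as $B^{s_1,1/q_1}_{q_1,q_1}\hookrightarrow B^{s_2,1/q_2}_{q_2,q_2}$ rather than your equivalent $F^{s_1}_{\infty,q_1}\hookrightarrow F^{s_1}_{\infty,q_2}$, and for \eqref{520-4f} the paper passes through $B^{s_1,\tau_1}_{p_1,\infty}$ and an auxiliary small $q_3$ before reaching $F^{s_2,1/p_2}_{p_2,q_2}$, whereas you go directly via $B^{s_1,\tau_1}_{p_1,\max\{p_1,q_1\}}\hookrightarrow B^{s_2,1/q_2}_{q_2,q_2}$; both variants land on condition \eqref{520-1} of Theorem~\ref{lim-lim}, and the necessity arguments in (ii) are identical.
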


\begin{proof}
	The case $\tau_1 = \frac{1}{p_1}$ and $\tau_2 = \frac{1}{p_2}$ can be reduced to Theorem~\ref{lim-lim} due to \eqref{ftbt}. So we are left with the cases $\tau_1=\frac{1}{p_1}$ and $\tau_2<\frac{1}{p_2}$ in \eqref{520-3f}, and $\tau_1<\frac{1}{p_1}$ and $\tau_2=\frac{1}{p_2}$ in \eqref{520-4f}. In both cases we can use the coincidence \eqref{ftbt}. To prove that the condition \eqref{520-4f} is sufficient we  first take a sufficiently small number $q_3$ such that $\tau_1<\frac{1}{q_3}$. We consider the following factorisation 
	\begin{align}
	F^{s_1,\tau_1}_{p_1,q_1}(\Omega)\hookrightarrow  B^{s_1,\tau_1}_{p_1,\infty}(\Omega) \hookrightarrow B^{s_2,1/q_3}_{q_3,q_3}(\Omega) = F^{s_2,1/p_2}_{p_2,q_3}(\Omega)
	\end{align}
	since 
	\[ \frac{s_1-s_2}{d} = \gamma(\tau_1,\frac{1}{p_2},  p_1,p_2)= \frac{1}{p_1}-\tau_1= \gamma(\tau_1,\frac{1}{q_3},  p_1,q_3) > 0. \]
	Please note that the continuity of the second embedding follows from Theorem~\ref{lim-lim} since the condition \eqref{520-1} is satisfied.  By elementary embeddings the statement  holds for any $q_2\ge q_3$. 
	
	Regarding the case $\tau_1 = \frac{1}{p_1}$ and $\tau_2< \frac{1}{p_2}$, we have now $\critical=0$, so $s_1=s_2$. 
	Then 
	\begin{align}
	F^{s_1,1/p_1}_{p_1,q_1}(\Omega)    =   B^{s_1,1/q_1}_{q_1,q_1}(\Omega)\hookrightarrow B^{s_2,\frac{1}{q_2}}_{q_2,q_2}(\Omega)= F^{s_2,\frac{1}{p_2}}_{p_2,q_2}(\Omega) \hookrightarrow F^{s_2,\tau_2}_{p_2,q_2}(\Omega),
	\end{align}
	where we made use of Proposition~\ref{taumonoton} in the last embedding. Moreover, 
	\[  \gamma\left(\frac{1}{q_1}, \frac{1}{q_2}, q_1,q_2\right) = 0, 
	\]
	so the statement follows once more from \eqref{ftbt} and Theorem~\ref{lim-lim}. 
	
	Now we come to the necessity part of our result. The necessity of the condition $q_1\le q_2$ in the case of $\tau_1=\frac{1}{p_1}$ and $\tau_2=\frac{1}{p_2}$ follows from the second part of Theorem~\ref{lim-lim}. So we are left with the case $\tau_1 = \frac{1}{p_1}$ and $\tau_2< \frac{1}{p_2}$. Here we can use the following factorisation
	\begin{align} 
	B^{s_1,1/q_1}_{q_1,q_1}(\Omega)= F^{s_1,1/p_1}_{p_1,q_1}(\Omega)    \hookrightarrow  F^{s_2,\tau_2}_{p_2,q_2}(\Omega) \hookrightarrow 
	B^{s_2,\tau_2}_{p_2,\max\{p_2,q_2\}}(\Omega). 
	\end{align}
	Now Theorem~\ref{lim-lim} implies $q_1\le \max\{p_2,q_2\}$ since 
	\[  \gamma\left(\frac{1}{p_1}, \tau_2, p_1,p_2\right) = \gamma\left(\frac{1}{q_1},\tau_2,  q_1,p_2\right) =0. \]
\end{proof}

 \begin{remark}
   We return to the special situation of \eqref{bmo=ft=bt-O} as a a source or target space of $\id_\tau$. In continuation of Remarks~\ref{R-bmo-1} and \ref{R-bmo-2} we now concentrate on the situation covered by Theorem~\ref{lim-lim} and Corollary~\ref{lim-lim-c1}, that is, we always assume now $\tau\leq \frac1p$ with $q<\infty$ if $\tau=\frac1p$.
First we deal  with 
    \[
    \id_\tau : \bmo(\Omega)\hookrightarrow \at(\Omega) 
    \]
    such that the limiting situation reads as $s = 0$ in that case. Then Theorem~\ref{lim-lim} and Corollary~\ref{lim-lim-c1} imply that $\id_\tau$ is continuous if
    \[
    s=0, \quad \tau\leq \frac1p, \quad \text{and}\quad \begin{cases} q\geq \max\{p,2\}, & A=B, \\ q\geq 2, & A=F. \end{cases}
    \]
Regarding the necessity, when $A=B$, the condition $q\geq 2$ is necessary for the continuity of $\id_\tau$. In addition, when $A=F$, then $q\geq 2$ is also necessary when $\tau=\frac1p$, while for $\tau<\frac1p$ the condition $\max\{p,q\}\geq 2$ is necessary. Note that one can also use \eqref{bmo=ft=bt-O} and Proposition~\ref{taumonoton} for the  sufficiency argument. In the second setting, 
    \[
\id_\tau: \at(\Omega)\hookrightarrow \bmo(\Omega),
    \]
    the limiting case means $s=d(\frac1p-\tau)$.
    Then for the continuity of $\id_\tau$ it is sufficient that
    \[ \text{either}\quad \tau<\frac1p, \qquad \text{or}\qquad\tau=\frac1p\quad\text{and}\quad\begin{cases} q\leq \min\{p,2\}, & A=B, \\ q\leq 2, & A=F, \end{cases} \]
where for $\tau=\frac1p$ the condition $q\leq 2$ is also necessary for the continuity of $\id_\tau$. 
  \end{remark}

\begin{remark}\label{R-lim-hybrid}
  Note that one can formulate counterparts of our above embedding results for spaces of type $\at(\Omega)$ in terms of the hybrid spaces $L^r\A(\Omega)$ as introduced in Remark~\ref{T-hybrid} using the coincidence \eqref{hybrid=tau}.
We define the spaces $L^r\A(\Omega)$ by restriction, parallel to the approach in Definition~\ref{tau-spaces-Omega}. 
  In \cite[Remark~3.4]{ghs20} we have explicated the compactness condition for the embedding
  \[
  \id_L : L^{r_1}\Ae(\Omega) \hookrightarrow L^{r_2}\Az(\Omega),
  \]
which in the special case $r_1=r_2=r$ reads as
\[
s_1-s_2 > \begin{cases} 0, & r\geq 0, \\ r\, \min\{\frac{p_1}{p_2}-1,0\}, & r<0,\end{cases}
\]
where $0<p_i<\infty$, $0<q_i\leq \infty$, $s_i\in\real$, $i=1,2$, and 
$-d \min\{\frac{1}{p_1}, \frac{1}{p_2}\} \leq r<\infty$ is always assumed. For convenience we only discuss this special setting $r_1=r_2=r$ and $A=B$ here, but the other cases can be done in a parallel way.

So the limiting case for the continuity of the embedding is just
\beq \label{lim-hybrid}
s_1-s_2 = \begin{cases} 0, & r\geq 0, \\ r\, \min\{\frac{p_1}{p_2}-1,0\}, & r<0.\end{cases}
\eeq
\end{remark}

\begin{corollary}\label{same_tau-hybrid}
  Let $0< p_i<\infty$, $s_i\in \real$, $0<q_i\leq\infty$, $i=1,2$, and
  $-d \min\left\{\frac{1}{p_1}, \frac{1}{p_2}\right\}< r< \infty$.  
  Let  
	\begin{equation}
	\id_L: L^r\Be(\Omega )\hookrightarrow L^r\Bz(\Omega).
	\end{equation}
        \bli
        \item[\upshape{\bfseries (i)}]
If $r>0$, then $\id_L$ is continuous if, and only if, $s_1\geq s_2$.
        \item[\upshape{\bfseries (ii)}]
          Let $r=0$.
          \bli
        \item[\upshape{\bfseries (ii$_a$)}]
          If $q_2=\infty$, then $\id_L$ is continuous if, and only  if, $s_1\geq s_2$.
        \item[\upshape{\bfseries (ii$_b$)}]
          If $q_2<\infty$ and $q_1=\infty$, then $\id_L$ is continuous if, and only if, $s_1>s_2$.           
        \item[\upshape{\bfseries (ii$_c$)}]
          If $q_i<\infty$, $i=1,2$, then $\id_L$ is continuous if $s_1>s_2$ or $s_1=s_2$ and $q_1\leq \min\{1, \frac{p_1}{p_2}\}q_2$. Conversely, if $\id_L$ is continuous, then either $s_1>s_2$ or $s_1=s_2$ and $q_1\leq q_2$.
          \eli
        \item[\upshape{\bfseries (iii)}]
          Let $r<0$.
           \bli
         \item[\upshape{\bfseries (iii$_a$)}]
 Assume that $p_1\geq  p_2$. \ignore{Then $\id_L$ is continuous if $s_1>s_2$ or $s_1=s_2$ and $q_1\leq	\frac{p_1}{p_2} q_2$. Conversely, if $\id_L$ is continuous, then either $s_1>s_2$ or $s_1=s_2$ and $q_1\leq q_2$.}  Then $\id_L$ is continuous if, and only if, $s_1>s_2$ or $s_1=s_2$ and $q_1\leq q_2$.
         \item[\upshape{\bfseries (iii$_b$)}]
           Assume $p_1< p_2$. Then $\id_L$ is continuous if $s_1-s_2>r(\frac{p_1}{p_2}-1)$, or $s_1-s_2=r(\frac{p_1}{p_2}-1)$ and $q_1\leq \frac{p_1}{p_2} q_2$. Conversely, the continuity of $\id_L$ implies $s_1-s_2>r(\frac{p_1}{p_2}-1)$, or $s_1-s_2=r(\frac{p_1}{p_2}-1)$ and $q_1\leq  q_2$.
           \eli
\eli
    \end{corollary}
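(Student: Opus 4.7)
The plan is to translate $\id_L$ into an embedding of Besov-type spaces via the coincidence \eqref{hybrid=tau}. Since both $L^r\Be(\Omega)$ and $\bte(\Omega)$ are defined by restriction from $\rn$, the identification $L^r B^{s_i}_{p_i,q_i}(\Omega)=B^{s_i,\tau_i}_{p_i,q_i}(\Omega)$ with $\tau_i=\frac{1}{p_i}+\frac{r}{d}$, $i=1,2$, carries over. The hypothesis $-d\min\{1/p_1,1/p_2\}<r$ forces $\tau_i>0$. Substituting $\tau_i-1/p_i=r/d$ into \eqref{gamma} shows $\critical=0$ except when $r<0$ and $p_1<p_2$, in which case $\critical=\frac{r}{d}\bigl(\frac{p_1}{p_2}-1\bigr)>0$. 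In all cases the limiting condition $s_1-s_2=d\,\critical$ of the earlier theorems reduces precisely to \eqref{lim-hybrid}.

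For part (i), $r>0$ yields $\tau_i>\frac{1}{p_i}$, so Proposition~\ref{yy02} (transferred to $\Omega$ by restriction) gives $L^rB^{s_i}_{p_i,q_i}(\Omega)=B^{s_i+r}_{\infty,\infty}(\Omega)$; the embedding thus reduces to the classical Besov embedding on $\Omega$ and is continuous iff $s_1\geq s_2$ by Remark~\ref{R-lim-B}. For part (ii), $r=0$ means $\tau_i=\frac{1}{p_i}$ with $\critical=0$, so the limiting case is $s_1=s_2$. The three subcases are dispatched by the corresponding earlier results: Proposition~\ref{lim-tau2-large} for (ii$_a$) gives $s_1\geq s_2$; Proposition~\ref{lim-tau1-large} for (ii$_b$) rules out the limiting case (it would require $q_2=\infty$), so Theorem~\ref{cont-tau} forces $s_1>s_2$; Theorem~\ref{lim-lim} handles (ii$_c$), with sufficiency from \eqref{520-5} and necessity from \eqref{ddh_4-5}.

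For part (iii), $r<0$ gives $\tau_i<\frac{1}{p_i}$, so Theorem~\ref{lim-lim} is the relevant tool. When $p_1\geq p_2$ we have $\critical=0$ and $\min\{1,p_1/p_2\}=1$, so the sufficient condition \eqref{520-5} and the necessary condition \eqref{ddh_4-5} coincide to yield $q_1\leq q_2$, producing the sharp statement in (iii$_a$). When $p_1<p_2$ we have $\critical=\frac{r}{d}\bigl(\frac{p_1}{p_2}-1\bigr)>0$, so the limiting case is $s_1-s_2=r(\frac{p_1}{p_2}-1)$; sufficiency comes from \eqref{520-4} with $q_1\leq(p_1/p_2)q_2$, while \eqref{ddh_4-6} yields only the weaker necessity $q_1\leq q_2$, which is exactly the gap stated in (iii$_b$).

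The main task throughout is careful bookkeeping: one must verify that the various conditions \eqref{520-1}--\eqref{520-5} and \eqref{ddh_4-5}--\eqref{ddh_4-6} of Theorem~\ref{lim-lim}, phrased in terms of $\tau_1,\tau_2,p_1,p_2$, simplify under the substitution $\tau_i=1/p_i+r/d$ to the cleaner statements in terms of $r$ and $p_1,p_2$. This is essentially arithmetic with attention to the signs of $r$ and $p_1-p_2$; no new analytic ingredient is required.
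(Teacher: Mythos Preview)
Your proposal is correct and follows essentially the same route as the paper's proof: translate via \eqref{hybrid=tau} to Besov-type spaces, invoke Theorem~\ref{cont-tau} for the non-limiting range, and then dispatch the limiting cases using Proposition~\ref{lim-tau2-large} for (i) and (ii$_a$), Proposition~\ref{lim-tau1-large} for (ii$_b$), and Theorem~\ref{lim-lim} for (ii$_c$) and (iii). Your write-up actually supplies more of the arithmetic detail (identifying which of \eqref{520-1}--\eqref{520-5} and \eqref{ddh_4-5}--\eqref{ddh_4-6} apply in each subcase) than the paper's terse proof does.
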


\begin{proof}
  In view of Remark~\ref{R-lim-hybrid} we only need to consider the limiting case \eqref{lim-hybrid}, the rest is covered by Theorem~\ref{cont-tau} and the coincidence \eqref{hybrid=tau}, extended to spaces on domains. Then (i) and (ii$_a$) are covered by Proposition~\ref{lim-tau2-large} together with $r=d(\tau_i-\frac{1}{p_i})$, $i=1,2$. Likewise (ii$_b$) is a consequence of Proposition~\ref{lim-tau1-large} since there is no continuous embedding in the limiting case $s_1=s_2$. Part (ii$_c$) follows from Theorems~\ref{lim-lim} {and \ref{cont-tau}}, as well as part (iii).
\end{proof}

We finish our paper by collecting some immediate extensions of Theorems~\ref{B-rn} and \ref{F-rn} regarding the embeddings on $\rn$. The first result improves part (b) of Theorem~\ref{B-rn}~(iii), and it follows from Theorem~\ref{Th:ext} and Theorem~\ref{lim-lim}. 
\begin{corollary}\label{Cor1-rn}
	Let $0< p_1,p_2\leq\infty$, $s_i\in \real$, $0<q_i\leq\infty$,  $0\leq 
	\tau_i\leq \frac{1}{p_i}$, {with} $q_i<\infty$ {if} $\tau_i=\frac{1}{p_i}$, $i=1,2$. If
	\begin{equation}\label{id:B-rn}
		\id_{\tau}^{\rn}: \bte(\rn) \hookrightarrow \btz(\rn)
	\end{equation}
	is continuous and $\frac{s_1-s_2}{d}=\frac{1}{p_1}-\tau_1-\frac{1}{p_2}+ \frac{p_1}{p_2}\tau_1>0$, then
	\[ q_1\leq q_2.
	\]
\end{corollary}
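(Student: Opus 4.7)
The plan is to reduce the continuity of $\id_\tau^{\rn}$ on $\rn$ to the continuity of the corresponding embedding on a bounded Lipschitz domain, and then to apply the necessity part of Theorem~\ref{lim-lim} directly. The extension theorem from Section~\ref{extension} is precisely what makes this reduction painless.

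First I would fix a convenient bounded Lipschitz domain $\Omega \subset \rn$ (for instance, the unit cube) and consider the commutative diagram
\[
\begin{CD}
\bte(\Omega)@>{\id_\tau}>> \btz(\Omega)\\
@V{\Ext}VV  @AA{\mathrm{re}}A\\
\bte(\rn)@>{\id_\tau^{\rn}}>> \btz(\rn),
\end{CD}
\]
where $\Ext$ is the universal bounded extension operator provided by Theorem~\ref{Th:ext} and $\mathrm{re}$ is the (bounded) restriction operator built into Definition~\ref{tau-spaces-Omega}. Since $\mathrm{re}\circ\Ext = \id$ on $\bte(\Omega)$, the assumed continuity of $\id_\tau^{\rn}$ forces the continuity of $\id_\tau: \bte(\Omega) \hookrightarrow \btz(\Omega)$, with norm controlled by $\|\Ext\|\cdot \|\id_\tau^{\rn}\|\cdot \|\mathrm{re}\|$.

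Second, I would match the hypothesis $\frac{s_1-s_2}{d} = \frac{1}{p_1}-\tau_1 -\frac{1}{p_2}+\frac{p_1}{p_2}\tau_1 > 0$ to the limiting situation considered in Theorem~\ref{lim-lim}. A short check shows that this equality forces $\tau_1 < \frac{1}{p_1}$ (otherwise the right hand side vanishes) and, being the value of $\gamma(\tau_1,\tau_2,p_1,p_2)$ in \eqref{gamma}, also forces $\tau_2 \leq \frac{p_1}{p_2}\tau_1 \leq \frac{1}{p_2}$, so conditions \eqref{ddh_4-3} are satisfied and \eqref{ddh_4-6} holds. Applying Theorem~\ref{lim-lim}~(ii) to the embedding on $\Omega$ then yields $q_1 \leq q_2$.

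There is no real obstacle here beyond the extension theorem itself; the corollary is essentially the combination of Theorem~\ref{Th:ext}, which transfers $\rn$-continuity to $\Omega$-continuity, with the sharp necessary condition proved for $\Omega$ in Theorem~\ref{lim-lim}~(ii). The only point worth double-checking is that the parameter regime assumed in the corollary is indeed covered by case \eqref{ddh_4-6} of Theorem~\ref{lim-lim}, which the computation in the preceding paragraph confirms.
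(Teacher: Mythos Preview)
Your proof is correct and follows essentially the same route as the paper: use the extension operator from Theorem~\ref{Th:ext} together with restriction to transfer continuity of $\id_\tau^{\rn}$ to continuity of $\id_\tau$ on a bounded Lipschitz domain $\Omega$, and then invoke Theorem~\ref{lim-lim}~(ii). Your additional verification that the parameters fall into case~\eqref{ddh_4-6} is a welcome detail the paper leaves implicit; just note that the inequality $\tau_2 \le \tfrac{p_1}{p_2}\tau_1$ is not assumed a priori but follows from the continuity on $\Omega$ combined with Theorem~\ref{cont-tau}~(ii), since otherwise $\critical > \tfrac{s_1-s_2}{d}$.
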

\begin{proof}
	By Theorem~\ref{Th:ext}, we have
	\[
	\bte(\Omega) \xrightarrow{\ext} \bte(\rn) \xrightarrow{\id_\tau^{\rn}} \btz(\rn) \xrightarrow{\re} \btz(\Omega),
	\]
	i.e., $\id_\tau^\Omega = \re \circ \id_\tau^{\rn} \circ \ext$. Hence, the continuity of $\id_\tau^{\rn}$ implies the continuity of $\id_\tau^\Omega$, which, in turn, by Theorem~\ref{lim-lim} implies $q_1\leq q_2$.
\end{proof}

Next we show that, in case of $\tau_1=\tau_2$, the embedding $\id_\tau^{\rn}$ in \eqref{id:B-rn} holds under weaker assumptions on the parameters than the ones stated in Theorem~\ref{B-rn}-(a). Namely, in this case, we do not need any condition on the parameters $q_1, q_2$. 
\begin{corollary} 
	Let $0< p_1,p_2\leq\infty$, $s_i\in \real$, $0<q_i\leq\infty$,  $0\leq 
	\tau_i\leq \frac{1}{p_i}$, {with} $q_i<\infty$ {if} $\tau_i=\frac{1}{p_i}$, $i=1,2$. If
	\[
	\frac{s_1-s_2}{d}= \frac{1}{p_1}-\tau_1-\frac{1}{p_2}+ \tau_2>0 \quad \mbox{and} \quad \tau_1=\tau_2,
	\]
	then the embedding $\id_\tau^{\rn}$ in \eqref{id:B-rn} is continuous.
\end{corollary}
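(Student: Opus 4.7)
My plan is to reduce the claimed embedding on $\rn$ to the corresponding embedding between the sequence spaces $\sbte(\rn)\hookrightarrow \sbtz(\rn)$ via the wavelet characterization of $\bt(\rn)$ from Proposition~\ref{wav-type2}, and then to carry out the sequence-space argument of Substep~1.2 in the proof of Theorem~\ref{lim-lim}, adapted from the local sequence space $\wz{\sbt}(\wz{Q}_0)$ to the full space $\sbt$ on $\rn$. Note that the hypotheses $\frac{s_1-s_2}{d}=\frac{1}{p_1}-\tau_1-\frac{1}{p_2}+\tau_2>0$ and $\tau_1=\tau_2$ force $p_1<p_2$ and $s_1-s_2=d(\tfrac{1}{p_1}-\tfrac{1}{p_2})$.

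Concretely, fix a sequence $t=\{t_{i,j,m}\}$ with $\|t\mid \sbte\|=1$, put $\tau:=\tau_1=\tau_2$, and rescale $\lambda_{i,j,m}:=2^{j(s_1+d/2-d/p_1)}\,t_{i,j,m}$. Unpacking $\|\cdot\mid \sbte\|$ one obtains, for every dyadic cube $P\in\mathcal{Q}$ and every $j\geq \max\{j_P,0\}$, the local estimate
\[
\sum_{m:\,Q_{j,m}\subset P}|\lambda_{i,j,m}|^{p_1}\leq 2^{-j_P d\tau p_1},
\]
and, by specializing to $P=Q_{j,m}$, the pointwise bound $|\lambda_{i,j,m}|\leq 2^{-jd\tau}$. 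Combining both estimates and using $p_1<p_2$ yields
\[
\left[\sum_{m:\,Q_{j,m}\subset P}|\lambda_{i,j,m}|^{p_2}\right]^{q_2/p_2}\leq 2^{-jd\tau(1-p_1/p_2)q_2}\cdot 2^{-j_P d\tau(p_1/p_2)q_2}.
\]
Summing this geometric series in $j\geq\max\{j_P,0\}$, which converges thanks to $1-p_1/p_2>0$, and dividing by $|P|^{\tau_2}=2^{-j_Pd\tau}$ gives a bound on $\|t\mid \sbtz\|$ that is uniform in $P$: when $j_P\geq 0$, the geometric sum produces exactly $2^{-j_Pd\tau}$, cancelling $|P|^{-\tau_2}$; when $j_P<0$, it produces $2^{-j_P d\tau(p_1/p_2)}$, so after multiplying by $|P|^{-\tau_2}=2^{j_P d\tau}$ one obtains the factor $2^{j_P d\tau(1-p_1/p_2)}\leq 1$ since $j_P<0$. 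Thus $\|t\mid\sbtz\|\leq C$, and Proposition~\ref{wav-type2} yields the continuity of $\id_\tau^{\rn}$.

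The main obstacle is the careful treatment of dyadic cubes $P$ with $j_P<0$, which do not appear in the local sequence space $\wz{\sbt}(\wz{Q}_0)$ used in Substep~1.2 of the proof of Theorem~\ref{lim-lim} but are essential on $\rn$; the crucial input making the argument go through in this regime is the strict inequality $p_1<p_2$, which both ensures geometric convergence of the sum in $j$ and produces the positive exponent $1-p_1/p_2$ that renders $2^{j_P d\tau(1-p_1/p_2)}\leq 1$ whenever $j_P<0$. Observe also that no condition on $q_1,q_2$ enters the argument, which is why the continuity of $\id_\tau^{\rn}$ holds for all admissible values of these parameters, in contrast to what is stated in Theorem~\ref{B-rn}(iii)(a).
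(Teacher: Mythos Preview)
Your proposal is correct and follows precisely the route the paper indicates: the paper's proof simply states that the result ``can be proved in the same way as its counterpart for embeddings on domains, cf.\ Substep~1.2 of the proof of Theorem~\ref{lim-lim}'' and omits the details. You have carried out exactly this argument, reducing via Proposition~\ref{wav-type2} to the sequence spaces and repeating the computation of Substep~1.2, with the only new ingredient being your explicit treatment of dyadic cubes $P$ with $j_P<0$ (absent in the domain setting but needed on $\rn$), which you handle correctly. One minor point: the convergence of the geometric series and the bound $2^{j_P d\tau(1-p_1/p_2)}\le 1$ for $j_P<0$ both require $\tau>0$, not merely $1-p_1/p_2>0$ as you write; this is consistent with the paper's standing assumption $\tau_1+\tau_2>0$ in~\eqref{ddh_4-3} and is also implicitly used in Substep~1.2 itself.
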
 
\begin{proof}
	This result can be proved in the same way as its counterpart for embeddings on domains, cf. Substep 1.2 of the proof of Theorem~\ref{lim-lim}. Therefore, we omit {the argument}  here.
\end{proof}
Lastly, we turn to the Triebel-Lizorkin-type spaces and state a result which gives us sufficient and necessary conditions for the continuity of the embedding on $\rn$, when $\tau_2$ is large and $\tau_1$ is small. Specifically, we assume that
\begin{equation}\label{cond-tau}
\tau_2 \geq \frac{1}{p_2}\,\,\,\, \mbox{with} \,\,\,\, q_2=\infty \,\,\,\, \mbox{if} \,\,\,\, \tau_2=\frac{1}{p_2} \qquad \mbox{and} \qquad \tau_1\leq \frac{1}{p_1} \,\,\,\, \mbox{with} \,\,\,\, q_1<\infty \,\,\,\, \mbox{if} \,\,\,\, \tau_1=\frac{1}{p_1},
\end{equation}
as this case that was not considered in Theorem~\ref{F-rn}-(i). 
\begin{corollary}
	Let $0< p_1,p_2<\infty$, $s_i\in \real$, $0<q_i\leq\infty$,  $\tau_i\geq 0$, $i=1,2$. Assume that condition \eqref{cond-tau} holds. Then the embedding
	\begin{equation}\label{id:F-rn}
	\id_{\tau}^{\rn}: \fte(\rn) \hookrightarrow \ftz(\rn)
	\end{equation}
	holds if, and only if, $\quad \displaystyle\frac{s_1-s_2}{d}\geq \frac{1}{p_1}- \tau_1 - \frac{1}{p_2}+\tau_2$. 
\end{corollary}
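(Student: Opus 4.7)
The plan is to reduce the corollary to the Besov-type case already handled in Theorem~\ref{B-rn}, using the key observation that the assumption $\tau_2\geq\frac{1}{p_2}$ (with $q_2=\infty$ if equality) forces the target space to collapse to a classical Besov space. Indeed, by Proposition~\ref{yy02} we have
\[
\ftz(\rn) \,=\, B^{s_2+d(\tau_2-\frac{1}{p_2})}_{\infty,\infty}(\rn) \,=\, \btz(\rn),
\]
so the embedding $\id_\tau^{\rn}$ can equivalently be studied as a map into this classical Besov space.

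For the sufficiency, assuming $\frac{s_1-s_2}{d}\geq \frac{1}{p_1}-\tau_1-\frac{1}{p_2}+\tau_2$, I would chain together the embedding \eqref{010319} with the elementary embedding of classical Besov spaces on $\rn$:
\[
\fte(\rn) \,\hookrightarrow\, B^{s_1+d(\tau_1-\frac{1}{p_1})}_{\infty,\infty}(\rn) \,\hookrightarrow\, B^{s_2+d(\tau_2-\frac{1}{p_2})}_{\infty,\infty}(\rn) \,=\, \ftz(\rn),
\]
the middle embedding being valid precisely because the hypothesis on $s_1,s_2$ rewrites as $s_1+d(\tau_1-\frac{1}{p_1})\geq s_2+d(\tau_2-\frac{1}{p_2})$. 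Note that \eqref{010319} is available for all admitted values of $\tau_1,p_1,q_1$, so both sub-cases $\tau_1<\frac{1}{p_1}$ and $\tau_1=\frac{1}{p_1}$ with $q_1<\infty$ are covered at once.

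For the necessity, suppose $\id_\tau^{\rn}$ is continuous. Inserting the elementary embedding \eqref{elem-tau} on the source side yields
\[
B^{s_1,\tau_1}_{p_1,\min\{p_1,q_1\}}(\rn) \,\hookrightarrow\, \fte(\rn) \,\hookrightarrow\, \ftz(\rn) \,=\, B^{s_2+d(\tau_2-\frac{1}{p_2})}_{\infty,\infty}(\rn),
\]
and, applying Proposition~\ref{yy02} once more in the opposite direction, this target equals $\btz(\rn)$. Since the hypothesis on $\tau_2,q_2$ is exactly the one appearing in Theorem~\ref{B-rn}(i), that theorem applied to the resulting Besov-type embedding produces the necessary condition
\[
\frac{s_1-s_2}{d} \,\geq\, \frac{1}{p_1}-\tau_1-\frac{1}{p_2}+\tau_2.
\]

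There is no real obstacle here: the argument is a routine assembly of already-proven facts. The only mild care needed is to verify that both \eqref{010319} and \eqref{elem-tau} remain applicable in the boundary subcase $\tau_1=\frac{1}{p_1}$ with $q_1<\infty$, but this is immediate from the unrestricted statements of those embeddings. The whole proof is therefore short enough to be written out inline in the paper.
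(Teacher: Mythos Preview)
Your sufficiency argument is identical to the paper's: both chain \eqref{010319} with Proposition~\ref{yy02} through the elementary classical Besov embedding.

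For the necessity, your route differs from the paper's. You stay on $\rn$: sandwiching via \eqref{elem-tau} reduces the Triebel--Lizorkin-type embedding to a Besov-type one, and then Theorem~\ref{B-rn}(i) (whose hypothesis on $\tau_2,q_2$ is exactly the first half of \eqref{cond-tau}) delivers the inequality directly. The paper instead invokes the extension operator of Theorem~\ref{Th:ext} to transfer the $\rn$-embedding to a bounded Lipschitz domain $\Omega$, and then reads off the necessary condition from the domain results Theorem~\ref{cont-tau}(ii) and Proposition~\ref{lim-tau2-large}. Your argument is correct and more self-contained, since it relies only on the already-quoted $\rn$-result from \cite{YHSY} and the elementary embedding \eqref{elem-tau}; the paper's detour through domains is presumably chosen to illustrate the applicability of the newly constructed extension operator, one of the main contributions of the article.
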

\begin{proof}
	The sufficiency part follows from the fact that 
	\[
	\fte(\rn)\hookrightarrow B^{s_1+d(\tau_1-\frac{1}{p_1})}_{\infty, \infty}(\rn) \hookrightarrow B^{s_2+d(\tau_2-\frac{1}{p_2})}_{\infty, \infty}(\rn) = \ftz(\rn),
	\]
	due to \eqref{010319} and Proposition~\ref{yy02}, and the corresponding result for the classical Besov spaces. 
	
	For the necessity, we use a similar argument as in the proof of Corollary~\ref{Cor1-rn}, via the extension operator from Theorem~\ref{Th:ext}. In this case, Theorem~\ref{cont-tau}-(i) and Proposition~\ref{lim-tau2-large} will give us the complete result. 
\end{proof}




\vspace{1cm}
\noindent
Helena F.~Gon\c{c}alves\\
Institute of Mathematics, Friedrich Schiller University Jena, 07737 Jena, Germany\\ E-mail: helena.goncalves@uni-jena.de \\\\
\noindent
Dorothee D.~Haroske \\
Institute of Mathematics, Friedrich Schiller University Jena, 07737 Jena, Germany\\
E-mail: dorothee.haroske@uni-jena.de \\\\
\noindent
Leszek Skrzypczak \\
Faculty of Mathematics and Computer Science, Adam Mickiewicz University,\\ ul. Uniwersytetu Pozna\'nskiego 4, 61-614 Pozna\'n, Poland \\
E-mail: leszek.skrzypczak@amu.edu.pl

\end{document}